\documentclass[a4paper]{article}

\usepackage{amsthm}
\usepackage{amsmath}
\usepackage{amssymb}
\usepackage{amsfonts}
\usepackage{enumerate}
\usepackage{authblk}
\usepackage{setspace}
\usepackage{fullpage}
\usepackage{hyperref}

\newcommand\CC{\mathbf{C}}
\newcommand\RR{\mathbf{R}}
\newcommand\QQ{\mathbf{Q}}
\newcommand\ZZ{\mathbf{Z}}
\newcommand\NN{\mathbf{N}}
\newcommand\id{\mathrm{id}}
\newcommand\AAA{\mathbf{A}}
\newcommand\sign{\mathrm{sign}}
\newcommand\whit{\mathcal{W}}
\newcommand\fin{\mathrm{fin}}
\newcommand\J{\mathbf{J}}

\newcommand\ww{\mathbf{w}}
\newcommand\rr{\mathbf{r}}
\newcommand\diag{\mathrm{diag}}
\newcommand\res{\mathrm{res}}
\newcommand\vol{\mathrm{vol}}
\newcommand\LL{\mathcal{L}}
\newcommand{\ppmod}{\!\!\!\!\!\pmod}

\renewcommand{\frak}{\mathfrak}
\renewcommand{\hat}{\widehat}

\newcommand\const{\mathrm{const.}}

\newcommand\cusp{\mathrm{cusp}}
\newcommand\cont{\mathrm{cont}}
\renewcommand\sp{\mathrm{sp}}
\newcommand\lfact[2]{#1\backslash #2}
\newcommand\rfact[2]{#1 / #2}
\newcommand\lrfact[3]{#1\backslash #2 / #3}

\newcommand\GLone[1]{\mathrm{GL}_{#1}}
\newcommand\GLtwo[2]{\mathrm{GL}_{#1}(#2)}

\newcommand\SLone[1]{\mathrm{SL}_{#1}}
\newcommand\SLtwo[2]{\mathrm{SL}_{#1}(#2)}
\newcommand\norm[1]{\mathcal{N}(#1)}
\newcommand\trace[1]{\mathrm{Tr}(#1)}

\newtheorem{prop}{Proposition}[section]
\newtheorem{lemm}[prop]{Lemma}
\newtheorem{theo}{Theorem}
\newtheorem{coro}[prop]{Corollary}
\newtheorem{partoftheo}{Part}

\begin{document}


\title{Shifted convolution sums and Burgess type subconvexity over number fields}
\author{P\'eter Maga\footnote{e-mail: maga.peter@renyi.mta.hu} \footnote{affil$_1$: MTA Alfr\'ed R\'enyi Institute of Mathematics, 13-15 Re\'altanoda u. 1053 Budapest, Hungary} \footnote{affil$_2$: Central European University, Dept of Mathematics and its Applications, 9 Nador u. 1051 Budapest, Hungary} \footnote{The research for this paper was a part of the author's PhD thesis at the Central European University, Budapest, Hungary.} \footnote{The author was supported by OTKA grant no. NK104183.}}
\date{}
\maketitle

Keywords: subconvexity, shifted convolution sums, spectral decomposition

2010 Mathematics Subject Classification: 11F41, 11F70, 11M41

\begin{abstract}
Let $F$ be a number field and $\pi$ an irreducible cuspidal representation of $\lfact{\GLtwo{2}{F}}{\GLtwo{2}{\AAA}}$ with unitary central character. Then the bound $$L(1/2,\pi\otimes\chi)\ll_{F,\pi,\chi_{\infty},\varepsilon} \norm{\frak{q}}^{3/8+\theta/4+\varepsilon}$$ holds for any Hecke character $\chi$ of conductor $\frak{q}$, where $\theta$ is any constant towards the Ramanujan-Petersson conjecture ($\theta=7/64$ is admissible). The proof is based on a spectral decomposition of shifted convolution sums.
\end{abstract}

\section{Introduction}\label{chap:introduction}

The subconvexity problem is concerned with the magnitude of an $L$-function on the critical line $\Re s=1/2$. Given a family of automorphic forms, we look for a bound of the form
\begin{equation*}
L(1/2,\pi)\ll_{\varepsilon} \mathrm{cond}(\pi)^{\delta+\varepsilon},
\end{equation*}
which holds for each member of the family. Here, $\mathrm{cond}(\pi)$ denotes the analytic conductor of $\pi$, see \cite[Section 2]{IwaniecSarnak}. With $\delta=1/4$, this is known as the convexity bound as it follows from the Phragm\'en-Lindel\"of convexity principle combined with the functional equation and some bound on the half-plane $\Re s>1$ (for the latter see \cite{Molteni}). However, for most applications, one needs a stronger estimate. Any improvement in the exponent (i.e. any $\delta<1/4$) is called a subconvex bound. The generalized Lindel\"of hypothesis predicts that even $\delta=0$ is admissible, and this would follow from the generalized Riemann hypothesis. On the other hand, several unconditional results are known. For example, for $\GLone{1}$ $L$-functions over $\QQ$ (i.e. Dirichlet $L$-functions), the famous Burgess bound \cite{Burgess} is the above with $\delta=3/16$. For automorphic $\GLone{2}$ $L$-functions over number fields the subconvexity problem was solved by Michel and Venkatesh \cite{MichelVenkatesh} with an unspecified $\delta$.

In a recent research \cite{BlomerHarcos}, Blomer and Harcos proved a Burgess type subconvex bound for twisted automorphic $\GLone{2}$ $L$-functions over totally real number fields. The method is based on the generalization of their earlier work \cite{BlomerHarcosDuke}, a spectral decomposition of shifted convolution sums.

In this paper, we work out the spectral decomposition over general number fields (which will be stated explicitly in Theorem \ref{spectraldecompositionofshiftedconvolutionsum}), and, as an application, we extend the subconvex bound of \cite{BlomerHarcos}. Assume $\pi$ is an irreducible cuspidal representation of $\lfact{\GLtwo{2}{F}}{\GLtwo{2}{\AAA}}$ with unitary central character, and let $\chi$ be a Hecke character of conductor $\frak{q}$. Moreover, let $\theta$ be a constant towards the Ramanujan-Petersson conjecture (by \cite{BlomerBrumley}, $\theta=7/64$ is admissible).
\begin{theo}\label{subconvextheorem} For any $\varepsilon>0$, we have the Burgess like subconvex bound
\begin{equation*}
L(1/2,\pi\otimes\chi)\ll_{F,\pi,\chi_{\infty},\varepsilon} \norm{\frak{q}}^{3/8+\theta/4+\varepsilon}.
\end{equation*}
\end{theo}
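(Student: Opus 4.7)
The strategy is to follow the Burgess-type amplification of Blomer--Harcos~\cite{BlomerHarcos} and feed every shifted convolution sum that arises into Theorem~\ref{spectraldecompositionofshiftedconvolutionsum}. First I would apply an approximate functional equation, reducing $L(1/2,\pi\otimes\chi)$ to the problem of bounding a smoothed Hecke-twisted sum
$$
S(X)=\sum_{\frak{n}}\lambda_\pi(\frak{n})\chi(\frak{n})\norm{\frak{n}}^{-1/2}V(\norm{\frak{n}}/X)
$$
of length $X\asymp\norm{\frak{q}}$, together with an analogous dual sum. Trivial estimation via Cauchy--Schwarz and the Rankin--Selberg bound $\sum_{\norm{\frak{n}}\leq X}|\lambda_\pi(\frak{n})|^2\ll X^{1+\varepsilon}$ recovers the convexity bound $\norm{\frak{q}}^{1/2+\varepsilon}$, so the real work is to exhibit cancellation beyond square-root from the interplay of $\lambda_\pi$ and $\chi$.

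The Burgess step exploits the $\frak{q}$-periodicity of $\chi$: after rewriting the $\frak{n}$-sum in terms of ideal generators modulo units, one shifts the variable by short translates $\beta\in\frak{q}$, averages over a suitable finite set $\mathcal{H}$ of such shifts, applies Cauchy--Schwarz in the summation variable together with a $2k$-th moment in $\beta\in\mathcal{H}$, and opens the $2k$-fold product via Hecke multiplicativity. Completing the remaining character sum modulo $\frak{q}$ by orthogonality of Hecke characters then leaves a weighted linear combination of shifted convolution sums
$$
D_\pi(\frak{h};X)=\sum_{\frak{m}-\frak{n}=\frak{h}}\lambda_\pi(\frak{m})\overline{\lambda_\pi(\frak{n})}W\bigl(\norm{\frak{m}}/X,\norm{\frak{n}}/X\bigr),
$$
with nonzero shifts $\frak{h}$ running over ideals divisible by $\frak{q}$ of suitably bounded norm.

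At this point Theorem~\ref{spectraldecompositionofshiftedconvolutionsum} is applied to each $D_\pi(\frak{h};X)$: the cuspidal and continuous contributions admit power saving via the Weyl law combined with Rankin--Selberg-type bounds on the inner products, while the residual/exceptional spectrum contributes a factor polynomial in $\norm{\frak{h}}^{\theta}$, which is precisely how the Ramanujan parameter $\theta$ enters the final exponent. Summing over shifts, balancing the amplifier parameters $k$ and $|\mathcal{H}|$ against $X=\norm{\frak{q}}$, and optimizing should then yield the exponent $3/8+\theta/4+\varepsilon$, in direct analogy with the totally real case of \cite{BlomerHarcos}.

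The main obstacle I expect is uniform archimedean control: the weight $W$ in $D_\pi(\frak{h};X)$ inherits an oscillatory profile from $V$ and from the archimedean spectral transforms occurring in Theorem~\ref{spectraldecompositionofshiftedconvolutionsum}, and at the complex archimedean places of $F$ the associated Bessel-type estimates are significantly more delicate than in the totally real setting. Arranging that the implied constant depends polynomially (rather than exponentially) on $\chi_{\infty}$ will be the most technical point, with a secondary nuisance being the infinite unit group of $\mathcal{O}_F$, which forces both $\mathcal{H}$ and the smooth weights to be chosen compatibly with a fundamental domain for the action of $\mathcal{O}_F^{\times}$ on ideals and on their generators.
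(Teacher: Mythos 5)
Your skeleton (approximate functional equation, reduction to shifted convolution sums whose shifts are divisible by $\frak{q}$, then Theorem \ref{spectraldecompositionofshiftedconvolutionsum}) matches the paper's, but the step that is supposed to \emph{produce} those shifted convolution sums is a genuine gap. You propose the classical Burgess mechanism: shift the variable by translates $\beta\in\frak{q}$, average over a set $\mathcal{H}$ of shifts, apply H\"older with a $2k$-th moment in $\beta$, open the product by Hecke multiplicativity and complete a character sum mod $\frak{q}$. This fails at the first move: the summand is $\lambda_{\pi}(t\frak{n}^{-1})\chi_{\fin}(t)W(t)$, and while $\chi_{\fin}$ is periodic mod $\frak{q}$, the Hecke eigenvalues $\lambda_{\pi}(t\frak{n}^{-1})$ are not, so the sum is not (approximately) invariant under $t\mapsto t+\beta$ and the shift-averaging that Burgess's argument rests on is simply not available. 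Moreover, "opening the $2k$-fold product via Hecke multiplicativity" does not lead to complete character sums to which orthogonality applies, and your optimization "in $k$ and $|\mathcal{H}|$" has no identifiable diagonal/off-diagonal balance that would output the exponent $3/8+\theta/4$; you assert the exponent rather than derive it.

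What the paper actually does is an \emph{amplified second moment over the family of characters mod $\frak{q}$}: one bounds $|\LL_{\chi_{\fin}}(v,p)|^2$ by positivity after inserting the amplifier $\sum_{(l)\in\Pi_{\frak{q},+}(L,2L)}\xi(l)\overline{\chi_{\fin}(l)}$ over totally positive principal primes of norm about $L$ and summing over all $\xi\in\widehat{(\rfact{\frak{o}}{\frak{q}})^{\times}}$; Plancherel on $(\rfact{\frak{o}}{\frak{q}})^{\times}$ then yields the congruence $l_1t_1\equiv l_2t_2\pmod{\frak{q}}$ as in (\ref{amplifiedsecondmoment}). The diagonal $l_1t_1=l_2t_2$ gives $\norm{\frak{q}}^{1+\varepsilon}/L$, and the off-diagonal terms are exactly shifted convolution sums with shift $q\in\frak{qn}$, $q\neq 0$, fed into Theorem \ref{spectraldecompositionofshiftedconvolutionsum}; the residual spectrum drops out because $q\neq 0$, and $\theta$ enters not through any "residual/exceptional spectrum" but through the bound $\lambda_{\varpi}(\frak{a})\ll\norm{\frak{a}}^{\theta+\varepsilon}$ applied to the $\frak{q}$-part of the Hecke eigenvalue of \emph{every} cuspidal $\varpi$ in the decomposition (together with the $\theta$-dependent Kirillov bounds at real places). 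Handling the cuspidal spectrum further requires a Mellin-part/arithmetic-part splitting and a spectral large sieve via the Kuznetsov formula (\ref{kuznetsovformula}), none of which appears in your outline. The resulting bounds $\norm{\frak{q}}^{1+\varepsilon}/L$ versus $\norm{\frak{q}}^{1/2+\theta+\varepsilon}L$ are balanced at $L=\norm{\frak{q}}^{1/4-\theta/2}$, which is where $3/8+\theta/4$ comes from. Finally, your concern about polynomial dependence on $\chi_{\infty}$ is moot: the implied constant in the theorem is allowed to depend on $\chi_{\infty}$ arbitrarily.
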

\noindent We remark that the same bound was simultaneously proved by Wu \cite{Wu}, using a method built on \cite{MichelVenkatesh}.

The family of twisted $L$-functions considered in Theorem \ref{subconvextheorem} was the first instance of the automorphic subconvexity problem that was studied systematically (see for example the works \cite{Iwaniechalfintegral}, \cite{Duke}, \cite{DukeFriedlanderIwaniecbounds1}, \cite{Bykovskii}, \cite{ConreyIwaniec}, \cite{CogdellPiatetskiShapiroSarnak}, \cite{BlomerHarcosMichel}, \cite{Venkateshsparse}). Via Waldspurger type formulae, critical values of twisted $L$-functions are connected to Fourier coefficients of modular forms of half-integral weight. The whole area is highly motivated by Hilbert's eleventh problem: which integers are integrally represented by a given quadratic form over a number field, and subconvex bounds for twisted $L$-functions give rise to the asymptotic of representation numbers of quadratic forms (see \cite{DukeSchulzePillot}). They also appear in the solution of other equidistribution problems (consult \cite{Cohen}, \cite{Zhang}, \cite{Venkateshsparse}). On the other hand, such subconvexity estimates are often used as ingredients in higher-rank subconvexity problems.

Shifted convolution sums have their own rich history and therefore several other applications as well. For example, in the investigation of the fourth moment of the Riemann zeta function on the critical line, off-diagonal terms led to the problem of the asymptotic behavior of the additive divisor sum (see \cite{Inghamriemannzeta}, \cite{Ingham}, \cite{Estermann}, \cite{Heath-Brown}, \cite{Motohashi}, \cite{Motohashibook}). Analogously, the second moment of an $L$-function corresponding to a $\GLone{2}$ cusp form leads to shifted convolution sums made of Hecke eigenvalues (see \cite{GoodJNT}, \cite{GoodMathAnn}). As an other application, we mention that Blomer and Harcos \cite[Theorem 3]{BlomerHarcos} gave the exact spectral decomposition (i.e. a spectral decomposition without error term) of a certain Dirichlet series with coefficients coming from a shifted convolution sum of Fourier coefficients of cuspidal representations: this problem originates from Selberg \cite{Selberg}. We hope that our generalization, Theorem \ref{spectraldecompositionofshiftedconvolutionsum} in Section \ref{chap:shiftedconvolutionsums} will lead to the more general analogs.

We introduce the notations we shall use later. We advise the reader to consult \cite{Weil} for the arising notions.

\subsection{The number field}

Throughout this paper, $X\ll_A Y$ means that $|X|\leq cY$ for some constant $c>0$ depending only on $A$. Also, $X\asymp_A Y$ means that $X\ll_A Y\ll_A X$.

Let $F$ be a number field, a finite algebraic extension of $\QQ$. Assuming $F$ has $r$ real and $s$ complex places, we will throughout denote the corresponding archimedean completions by $F_1,\ldots,F_{r+s}$, where $F_1,\ldots,F_r$ are all isomorphic to $\RR$ and $F_{r+1},\ldots,F_{r+s}$ are all isomorphic to $\CC$ as topological fields. Let $F_{\infty}$ stand for the direct sum of these fields (as rings), $F_{\infty}^{\times}$ for its multiplicative group, $F_{\infty,+}^{\times}$ for the totally positive elements (which are positive at each real place), and
$F_{\infty,+}^{\diag}$ for $\{(a_1,\ldots,a_{r+s})\in F_{\infty,+}^{\times}: a_1=\ldots=a_{r+s}\}$.

Denote by $\frak{o}$ the ring of integers of $F$. The ideals and fractional ideals will be denoted by gothic characters $\frak{a},\frak{b},\frak{c},\ldots$, the prime ideals by $\frak{p}$ and we keep $\frak{d}$ for the different and $D_F$ for the discriminant of $F$. Each prime ideal $\frak{p}$ determines a non-archimedean place and a corresponding completion $F_{\frak{p}}$. At such a place, we denote by $\frak{o}_{\frak{p}}$ the maximal compact subring.

Write $\AAA$ for the adele ring of $F$. Given an adele $a$, $a_j$ denotes its projection to $F_j$ for $1\leq j\leq r+s$, and $a_{\frak{p}}$ the same to $F_{\frak{p}}$ for a prime ideal $\frak{p}$. We will also use the subscripts $j,\frak{p}$ for the projections of other adelic objects to the place corresponding to $j,\frak{p}$, respectively. The subscripts $\infty$ and $\fin$ stand for the projections to $F_{\infty}$ and $\prod_{\frak{p}}F_{\frak{p}}$.

The absolute norm (module) of adeles will be denoted by $|\cdot|$, while $|\cdot|_j$ and $|\cdot|_{\frak{p}}$ will stand for the norm (module) at single places. Sometimes we will need $|\cdot|_{\infty}$, which is the product of the archimedean norms. (At this point, we call the reader's attention to the notational ambiguity that for a real or complex number $y$, we keep the conventional $|y|$ for its ordinary absolute value. We hope this will not lead to confusion. Note that at real places, $|y|_j=|y|$, while at complex places, $|y|_j=|y|^2$.) For a fractional ideal $\frak{a}$, $\norm{\frak{a}}$ will denote its absolute norm, defined as $\norm{\frak{a}}=|a|^{-1}$, where $a$ is any finite representing idele for $\frak{a}$. When $a$ is a finite idele, we may also write $\norm{a}$ for $|a|^{-1}$.

We define an additive character $\psi$ on $\AAA$: it is required to be trivial on $F$ (embedded diagonally); on $F_{\infty}$:
\begin{equation*}
\psi_{\infty}(x)=\exp(2\pi i \trace{x})=\exp(2\pi i(x_1+\ldots+x_r+x_{r+1}+\overline{x_{r+1}}+\ldots+x_{r+s}+\overline{x_{r+s}})); \end{equation*}
while on $F_{\frak{p}}$: it is trivial on $\frak{d}_{\frak{p}}^{-1}$ but not on $\frak{d}_{\frak{p}}^{-1}\frak{p}^{-1}$.

\subsection{Matrix groups}

Given a ring $R$, we define the following subgroups of $\GLtwo{2}{R}$:
\begin{equation*}
\begin{split}
&Z(R)=\left\{\begin{pmatrix} a & 0 \cr 0 & a\end{pmatrix}: a\in R^{\times}\right\},\\ &
B(R)=\left\{\begin{pmatrix} a & b \cr 0 & d\end{pmatrix}: a,d\in R^{\times}, b\in R\right\},\\ & 
N(R)=\left\{\begin{pmatrix} 1 & b \cr 0 & 1\end{pmatrix}: b\in R\right\}.
\end{split}
\end{equation*}

Assume $0\neq\frak{n}_{\frak{p}},\frak{c}_{\frak{p}}\subseteq \frak{o}_{\frak{p}}$. Then let
\begin{equation*}
K_{\frak{p}}(\frak{n}_{\frak{p}},\frak{c}_{\frak{p}})=\left\{\begin{pmatrix} a & b \cr c & d\end{pmatrix}: a,d\in \frak{o}_{\frak{p}}, b\in(\frak{n}_\frak{p}\frak{d}_\frak{p})^{-1}, c\in \frak{n}_\frak{p}\frak{d}_\frak{p}\frak{c}_\frak{p}, ad-bc\in\frak{o}_\frak{p}^{\times}\right\},
\end{equation*}
moreover in the special case $\frak{n}_{\frak{p}}=\frak{o}_{\frak{p}}$, we simply write $K_{\frak{p}}(\frak{c}_{\frak{p}})$ instead of $K_{\frak{p}}(\frak{o}_{\frak{p}},\frak{c}_{\frak{p}})$. For ideals $0\neq\frak{n},\frak{c}\subseteq\frak{o}$, let
\begin{equation*}
K(\frak{n},\frak{c})= \prod_{\frak{p}}K_{\frak{p}}(\frak{n}_{\frak{p}},\frak{c}_{\frak{p}}),\qquad K(\frak{c})=\prod_{\frak{p}}K_{\frak{p}}(\frak{c}_{\frak{p}}),
\end{equation*}
and taking the archimedean places into account, let
\begin{equation*}
K=K_{\infty}\times K(\frak{o})\subseteq\GLtwo{2}{\AAA},
\end{equation*}
where
\begin{equation*}
K_{\infty}=\prod_{j=1}^r\mathrm{SO}_2(\RR)\times \prod_{j=r+1}^{r+s}\mathrm{SU}_2(\CC).
\end{equation*}
Finally, for $0\neq\frak{n},\frak{c}\subseteq\frak{o}$, let
\begin{equation*}
\Gamma(\frak{n},\frak{c})=\left\{g_{\infty}\in\GLtwo{2}{F_{\infty}}: \exists g_{\fin}\in\prod_{\frak{p}}K_{\frak{p}}(\frak{n}_{\frak{p}},\frak{c}_{\frak{p}}) \mathrm{\ such\ that\ } g_{\infty}g_{\fin}\in\GLtwo{2}{F}\right\}.
\end{equation*}
We note that the choice of the subgroups $K$ is not canonical (they can be conjugated arbitrarily), our normalization follows \cite{BlomerHarcos}.

\paragraph{Archimedean matrix coefficients}

On $K_{\infty}$, we define the matrix coefficients (see \cite[p.8]{BumpLie}). Again, it is more convenient to give them on the factors. At a real place, on $\mathrm{SO}_2(\RR)$, for a given integer $q$, set
\begin{equation*}
\Phi_q\left(\begin{pmatrix} \cos\theta & \sin\theta \cr -\sin\theta & \cos\theta \end{pmatrix}\right)=\exp(i q\theta).
\end{equation*}
At a complex place, on $\mathrm{SU}_2(\CC)$, we introduce the parametrization
\begin{equation*}
\mathrm{SU}_2(\CC)=\left\{k[\alpha,\beta]=\begin{pmatrix} \alpha & \beta \cr -\overline{\beta} & \overline{\alpha}\end{pmatrix}: \alpha,\beta\in\CC,|\alpha|^2+|\beta|^2=1\right\}.
\end{equation*}
Assume now that the integers or half-integers $p,q,l$ satisfy $|p|,|q|\leq l$ and $p\equiv q\equiv l \pmod 1$. Then the matrix coefficient $\Phi_{p,q}^l$ is defined via
\begin{equation*}
\sum_{|p|\leq l} \Phi_{p,q}^l(k[\alpha,\beta])x^{l-p}=(\alpha x-\overline{\beta})^{l-q} (\beta x+\overline{\alpha})^{l+q},
\end{equation*}
where this equation is understood in the polynomial ring $\CC[x]$. See \cite[(3.18)]{BruggemanMotohashi} and \cite[(2.28)]{Lokvenec}. Note that
\begin{equation*}
||\Phi_{p,q}^l||_{\mathrm{SU}_2(\CC)}= \left(\int_{\mathrm{SU}_2(\CC)}|\Phi_{p,q}^l(k)|^2dk\right)^{1/2}= \frac{1}{\sqrt{2l+1}} {{2l}\choose{l-p}}^{1/2} {{2l}\choose{l-q}}^{-1/2}
\end{equation*}
by \cite[(2.35)]{Lokvenec}, where the Haar measure on $\mathrm{SU}_2(\CC)$ is the probability measure.

\subsection{Measures}

On $F_{\frak{p}}$, we normalize the Haar measure such that $\frak{o}_{\frak{p}}$ has measure $1$. On $F_{\infty}$, we use the Haar measure $|D_F|^{-1/2}dx_1\cdots dx_r|dx_{r+1}\wedge d\overline{x_{r+1}}|\cdots|dx_{r+s}\wedge d\overline{x_{r+s}}|$. On $\AAA$, we use the Haar measure $dx$, the product of these measures, this induces a Haar probability measure on $\lfact{F}{\AAA}$ (see \cite[Chapter V, Proposition 7]{Weil}).

On $\RR^{\times}$, we use the Haar measure $d_{\RR}^{\times}y=dy/|y|$, this gives rise to a Haar measure on $\CC^{\times}$ as $d_{\CC}^{\times}y=d_{\RR}^{\times}|y|d\theta/2\pi$, where $\exp(i\theta)=y/|y|$. On $F_{\infty}^{\times}$, we use the product $d_{\infty}^{\times}y$ of these measures. On $F_{\frak{p}}^{\times}$, we normalize the Haar measure such that $\frak{o}_{\frak{p}}^{\times}$ has measure $1$. The product $d^{\times}y$ of these measures is a Haar measure on $\AAA^{\times}$, inducing some Haar measure on $\lfact{F^{\times}}{\AAA^{\times}}$.

On $K$ and its factors, we use the Haar probability measures. On $\lfact{Z(F_{\infty})}{\GLtwo{2}{F_{\infty}}}$, we use the Haar measure which satisfies
\begin{equation*}
\int_{\lfact{Z(F_{\infty})}{\GLtwo{2}{F_{\infty}}}}f(g)dg= \int_{(\RR^{\times})^r\times(\RR_+^{\times})^s} \int_{F_{\infty}} \int_{K_{\infty}}f\left(\begin{pmatrix}y & x \cr 0 & 1 \end{pmatrix}k\right)dkdx\frac{d_{\infty}^{\times}y}{\prod_{j=1}^{r+s}|y_j|}.
\end{equation*}
Recalling $|y|_{\infty}=\prod_{j=1}^r|y_j|\prod_{j=r+1}^{r+s}|y_j|^2$, it follows that on $F_{\infty}^{\times}$, $d_{\infty}^{\times}y=\const dy/|y|_{\infty}$.

On $\GLtwo{2}{F_{\frak{p}}}$ we normalize the Haar measure such that $K(\frak{o}_{\frak{p}})$ has measure $1$. On the factor space $\lfact{Z(F_{\infty})}{\GLtwo{2}{\AAA}}$, we use the product of these measures, which, on the factor $\lfact{Z(\AAA)}{\GLtwo{2}{\AAA}}$, restricts as
\begin{equation*}
\int_{\lfact{Z(\AAA)}{\GLtwo{2}{\AAA}}}f(g)dg= \int_{\AAA^{\times}} \int_{\AAA} \int_{K} f\left(\begin{pmatrix}y & x \cr 0 & 1 \end{pmatrix}k\right)dkdx\frac{d^{\times}y}{|y|}.
\end{equation*}
Compare this with \cite[p.6]{BlomerHarcos} and \cite[(3.10)]{GelbartJacquet}.

\section{Background on automorphic theory}\label{chap:automorphybackground}

We review some basic facts about the automorphic theory of $\GLone{2}$ that we shall use later. In the setup, we follow the work of Blomer and Harcos \cite[Sections 2.2-2.7]{BlomerHarcos}, even when it is not emphasized. Since our aim is to extend the main results of \cite{BlomerHarcos} from totally real number fields to all number fields, we will always pay special attention to the complex places.

For a Hecke character $\omega$, we denote by $L^2(\lfact{\GLtwo{2}{F}}{\GLtwo{2}{\AAA}},\omega)$ the Hilbert space of functions $\phi:\GLtwo{2}{\AAA}\rightarrow\CC$ satisfying
\begin{equation*}
\begin{split}
&|\phi|^2 = \langle \phi,\phi \rangle<\infty,\ \mathrm{where}\ \langle \phi_1,\phi_2 \rangle = \int_{\lfact{Z(\AAA)\GLtwo{2}{F}}{\GLtwo{2}{\AAA}}} \phi_1(g)\overline{\phi_2(g)}dg;\\ & \forall z\in\AAA^{\times},\gamma\in\GLtwo{2}{F},g\in\GLtwo{2}{\AAA}: \phi\left(\begin{pmatrix}z & 0 \cr 0 & z\end{pmatrix}\gamma g\right)=\omega(z)\phi(g).
\end{split}
\end{equation*}
On $L^2(\lfact{\GLtwo{2}{F}}{\GLtwo{2}{\AAA}},\omega)$, the group $\GLtwo{2}{\AAA}$ acts via right translations. From now on, without loss of generality, we assume that $\omega$ is trivial on $F_{\infty,+}^{\diag}$ (see \cite[p.6]{BlomerHarcos}).

\subsection{Spectral decomposition and Eisenstein series}

In this section, following \cite[Section 2.2]{BlomerHarcos} closely, we give a short exposition of the spectral decomposition of the Hilbert space $L^2(\lfact{\GLtwo{2}{F}}{\GLtwo{2}{\AAA}},\omega)$. For a detailed discussion, consult \cite[Sections 2-5]{GelbartJacquet}.

First, $\phi\in L^2(\lfact{\GLtwo{2}{F}}{\GLtwo{2}{\AAA}},\omega)$ is called cuspidal if for almost every $g\in \GLtwo{2}{\AAA}$,
\begin{equation*}
\int_{\lfact{F}{\AAA}} \phi\left(\begin{pmatrix}1 & x \cr 0 & 1\end{pmatrix}g\right)dx=0.
\end{equation*}
The closed subspace generated by cuspidal functions is an invariant subspace $L_{\cusp}$ decomposing into a countable sum of irreducible representations $V_{\pi}$, each $\pi$ occuring with finite multiplicity (see \cite[Section 2]{GelbartJacquet}). This multiplicity is in fact one, as it follows from Shalika's multiplicity-one theorem, see \cite[Proposition 11.1.1]{JacquetLanglands} for the case $\GLone{2}$. Therefore, denoting the set of cuspidal representations by $\mathcal{C}_{\omega}$, we may write
\begin{equation*}
L_{\cusp}=\bigoplus_{\pi\in\mathcal{C}_{\omega}} V_{\pi},
\end{equation*}
where the irreducible representations on the right-hand side are distinct.

To any Hecke character $\chi$ with $\chi^2=\omega$, we can associate a one-dimensional representation $V_{\chi}$ generated by $g\mapsto \chi(\det g)$, these sum up to
\begin{equation*}
L_{\sp}=\bigoplus_{\chi^2=\omega} V_{\chi}.
\end{equation*}
For details, see \cite[Sections 3-4]{GelbartJacquet}.

Now
\begin{equation*}
L^2(\lfact{\GLtwo{2}{F}}{\GLtwo{2}{\AAA}},\omega)= L_{\cusp}\oplus L_{\sp}\oplus L_{\cont},
\end{equation*}
where $L_{\cont}$ can be described in terms of Eisenstein series.

Take Hecke quasicharacters $\chi_1,\chi_2:\lfact{F^{\times}}{\AAA^{\times}}\rightarrow\CC^{\times}$ satisfying $\chi_1\chi_2=\omega$. Denote by $H(\chi_1,\chi_2)$ the space of functions $\varphi:\GLtwo{2}{\AAA}\rightarrow\CC$ satisfying
\begin{equation*}
\int_K |\varphi(k)|^2dk<\infty
\end{equation*}
and
\begin{equation}\label{definitionofsectionfunctions}
\varphi\left(\begin{pmatrix}a & x \cr 0 & b\end{pmatrix}g\right)= \chi_1(a)\chi_2(b)\left|\frac{a}{b}\right|^{1/2}\varphi(g),\qquad x\in \AAA, a,b\in \AAA^{\times}.
\end{equation}
In particular, $H(\chi_1,\chi_2)$ can be identified with the set of functions $\varphi\in L^2(K)$ satisfying
\begin{equation*}
\varphi\left(\begin{pmatrix}a & x \cr 0 & b\end{pmatrix}g\right)= \chi_1(a)\chi_2(b)\varphi(g),\qquad \begin{pmatrix}a & x \cr 0 & b\end{pmatrix}\in K.
\end{equation*}
There is a unique $s\in\CC$ such that $\chi_1(a)=|a|_{\infty}^s$ and $\chi_2(a)=|a|_{\infty}^{-s}$ for $a\in F_{\infty,+}^{\diag}$ and introduce
\begin{equation*}
H(s)=\bigoplus_{\substack{\chi_1\chi_2=\omega \\ \chi_1\chi_2^{-1}=|\cdot|_{\infty}^{2s}\ \mathrm{on}\ F_{\infty,+}^{\diag}}}H(\chi_1,\chi_2).
\end{equation*}
Now regard the space $H=\int_{s\in\CC}H(s)ds$ as a holomorphic fibre bundle over base $\CC$. Given a section $\varphi\in H$, $\varphi(s)\in H(s)$ and $\varphi(s,g)\in\CC$. The bundle $H$ is trivial, since any $\varphi(0)\in H(0)$ extends to a section $\varphi\in H$ satisfying $\varphi(s,g)=\varphi(0,g)H(g)^s$, where $H(g)$ is the height function defined at \cite[p.219]{GelbartJacquet}. (One may think of this as a deformation of the function $\varphi$.)

Define
\begin{equation*}
L'_{\cont}=\int_{0}^{\infty}H(iy)dy,
\end{equation*}
and equip it with the inner product
\begin{equation*}
\begin{split}
\langle \phi_1,\phi_2 \rangle&= \frac{2}{\pi}\int_0^{\infty} \langle \phi_1(iy),\phi_2(iy) \rangle dy\\ &= \frac{2}{\pi} \int_0^{\infty} \int_{\lfact{F^{\times}}{\AAA^{1}}} \int_K \phi_1\left(iy,\begin{pmatrix} a & 0 \cr 0 & 1 \end{pmatrix}k\right) \overline{\phi_2\left(iy,\begin{pmatrix} a & 0 \cr 0 & 1 \end{pmatrix}k\right)} dkdady,
\end{split}
\end{equation*}
where $\AAA^{1}$ stands for the group of ideles of norm $1$ (see \cite[(3.15)]{GelbartJacquet}).
Then there is an intertwining operator $S:L_{\cont}\rightarrow L_{\cont}'$ given by \cite[(4.23)]{GelbartJacquet} on a dense subspace. Now combining this with the theory of Eisenstein series \cite[Section 5]{GelbartJacquet}, we obtain the spectral decomposition of $L_{\cont}$.

For $\varphi\in H$, and for $\Re s>1/2$, define the Eisenstein series
\begin{equation}\label{definitionofeisensteinseries}
E(\varphi(s),g)= \sum_{\gamma\in\lfact{B(F)}{\GLtwo{2}{F}}} \varphi(s,\gamma g)
\end{equation}
on $\GLtwo{2}{\AAA}$. This is a holomorphic function which continues meromorphically to $s\in\CC$, with no poles on the line $\Re s=0$. Now for $y\in\RR^{\times}$, consider the complex vector space
\begin{equation*}
V(iy)=\{E(\varphi(iy)):\varphi(iy)\in H(iy)\}
\end{equation*}
with the inner product
\begin{equation*}
\langle E(\varphi_1(iy)), E(\varphi_2(iy))\rangle= \langle \varphi_1(iy), \varphi_2(iy)\rangle.
\end{equation*}
As above,
\begin{equation*}
V(iy)=\bigoplus_{\substack{\chi_1\chi_2=\omega \\ \chi_1\chi_2^{-1}=|\cdot|_{\infty}^{2iy}\ \mathrm{on}\ F_{\infty,+}^{\diag}}}V_{\chi_1,\chi_2},
\end{equation*}
with
\begin{equation*}
V_{\chi_1,\chi_2}=\{E(\varphi(iy)):\varphi(iy)\in H(\chi_1,\chi_2)\}.
\end{equation*}
Here, $V(iy)=V(-iy)$ by \cite[(4.3), (4.24), (5.15)]{GelbartJacquet}. Therefore, we have a $\GLtwo{2}{\AAA}$-invariant decomposition
\begin{equation*}
L_{\cont}=\int_0^{\infty} V(iy)dy= \int_0^{\infty} \bigoplus_{\substack{\chi_1\chi_2=\omega \\ \chi_1\chi_2^{-1}=|\cdot|_{\infty}^{2iy}\ \mathrm{on}\ F_{\infty,+}^{\diag}}} V_{\chi_1,\chi_2}dy.
\end{equation*}

In fact, \cite[(4.24), (5.15-18)]{GelbartJacquet} implies that for $\phi\in L_{\cont}$, taking $S\phi=\varphi\in L_{\cont}'$,
\begin{equation*}
\phi(g)=\frac{1}{\pi}\int_0^{\infty} E(\varphi(iy),g)dy,
\end{equation*}
and also Plancherel holds, that is,
\begin{equation*}
\begin{split}
\langle \phi_1,\phi_2 \rangle&= \frac{1}{\pi} \int_0^{\infty} \langle E(\varphi(iy),g),\phi_2 \rangle dy\\ &= \frac{2}{\pi} \int_0^{\infty} \langle \varphi_1(iy),\varphi_2(iy) \rangle dy= \frac{2}{\pi} \int_0^{\infty} \langle E(\varphi_1(iy)),E(\varphi_2(iy)) \rangle dy.
\end{split}
\end{equation*}

To summarize,
\begin{equation}\label{spectraldecomposition}
L^2(\lfact{\GLtwo{2}{F}}{\GLtwo{2}{\AAA}},\omega)= \bigoplus_{\pi\in\mathcal{C}_{\omega}} V_{\pi} \oplus \bigoplus_{\chi^2=\omega} V_{\chi} \oplus \int_0^{\infty}\bigoplus_{\substack{\chi_1\chi_2=\omega \\ \chi_1\chi_2^{-1}=|\cdot|_{\infty}^{2iy}\ \mathrm{on}\ F_{\infty,+}^{\diag}}} V_{\chi_1,\chi_2}dy,
\end{equation}
a function on the left-hand side decomposes into a convergent sum and integral of functions from the spaces appearing on the right-hand side, and also Plancherel holds.

For the Eisenstein spectrum, we introduce the notation $\int_{\mathcal{E}_{\omega}}V_{\varpi}d\varpi$, where $\mathcal{E}_{\omega}$ is the set of unordered pairs of Hecke characters $\{\chi_1,\chi_2\}$ which are nontrivial on $F_{\infty,+}^{\diag}$ and satisfy $\chi_1\chi_2=\omega$.

\subsection{Derivations and weights}

We review the action of the Lie algebra $\mathfrak{sl}_2(F_{\infty})$ on the space $L^2(\lfact{\GLtwo{2}{F}}{\GLtwo{2}{\AAA}},\omega)$, following \cite[Sections 2.3 and 2.10]{BlomerHarcos} at real places, \cite[Section 3]{BruggemanMotohashi} and \cite[Chapter 2]{Lokvenec} at complex places.

First we give a real basis such that each basis element is $0$ for all but one place $F_j$. At this exceptional place, we use the following elements. For a real place ($j\leq r$), let
\begin{equation}\label{differentialoperatorbasisreal}
\mathbf{H}_j=\begin{pmatrix}1 & 0 \\ 0 & -1\end{pmatrix},\qquad
\mathbf{R}_j=\begin{pmatrix}0 & 1 \\ 0 & 0 \end{pmatrix},\qquad
\mathbf{L}_j=\begin{pmatrix}0 & 0 \\ 1 & 0 \end{pmatrix},
\end{equation}
while for a complex place ($j>r$), let
\begin{equation}\label{differentialoperatorbasiscomplex}
\begin{split}
&\mathbf{H}_{1,j}=\frac{1}{2}\begin{pmatrix}1 & 0 \\ 0 & -1\end{pmatrix},\qquad
\mathbf{V}_{1,j}=\frac{1}{2}\begin{pmatrix}0 & 1 \\ 1 & 0 \end{pmatrix},\qquad
\mathbf{W}_{1,j}=\frac{1}{2}\begin{pmatrix}0 & 1 \\ -1 & 0 \end{pmatrix},\\ &
\mathbf{H}_{2,j}=\frac{1}{2}\begin{pmatrix}i & 0 \\ 0 & -i\end{pmatrix},\qquad
\mathbf{V}_{2,j}=\frac{1}{2}\begin{pmatrix}0 & i \\ -i & 0 \end{pmatrix},\qquad
\mathbf{W}_{2,j}=\frac{1}{2}\begin{pmatrix}0 & i \\ i & 0 \end{pmatrix}.
\end{split}
\end{equation}
An element $X\in\frak{sl}_2(F_{\infty})$ acts as a right-differentiation on a function $\phi:\GLtwo{2}{\AAA}\rightarrow\CC$ via
\begin{equation*}
(X\phi)(g)=\left.\frac{d}{dt}\phi(g\exp(tX))\right|_{t=0}.
\end{equation*}
Let $\frak{g}=\frak{sl}_2(F_{\infty})\otimes_{\RR}{\CC}$ be the complexified Lie algebra and set $U(\frak{g})$ for its universal enveloping algebra, consisting of higher-order right-differentiations with complex coefficients.

The above-defined first-order differentiations give rise to local Casimir elements
\begin{equation}\label{definitionofcasimirelements}
\begin{split}
&\Omega_j=-\frac{1}{4}\left(\mathbf{H}_j^2-2\mathbf{H}_j+ 4\mathbf{R}_j\mathbf{L}_j\right),\\ &
\Omega_{\pm,j}=\frac{1}{8} \left((\mathbf{H}_{1,j}\mp\mathbf{H}_{2,j})^2+ (\mathbf{V}_{1,j}\mp\mathbf{W}_{2,j})^2- (\mathbf{W}_{1,j}\mp\mathbf{V}_{2,j})^2\right)
\end{split}
\end{equation}
at real and complex places, respectively.

On an irreducible unitary representation $(\pi,V_{\pi})$, these local Casimir elements act as scalars, that is, for $\phi\in V_{\pi}^{\infty}$, $\Omega_j\phi=\lambda_j\phi$, $\Omega_{+,j}\phi=\lambda_{+,j}\phi$, $\Omega_{-,j}\phi=\lambda_{-,j}\phi$ with
\begin{equation}\label{definitionofcasimirelementactions}
\lambda_j=\frac{1}{4}-\nu_j^2, \qquad \lambda_{\pm,j}=\frac{1}{8}\left((\nu_j\mp p_j)^2-1\right),
\end{equation}
where the parameters can be described as follows. At each place, the representation can be either even or odd (according to the action of the element which is $-\id$ at the corresponding place and $\id$ at all other places). At real places, there are three families of representations: principal series $\nu_j\in i\RR$, complementary series $\nu_j\in [-\theta,\theta]$ (only in the even case), and discrete series $\nu_j\in 1/2+\ZZ$ in the even case and $\nu_j\in \ZZ$ in the odd case. At complex places, there are two families of representations: principal series $\nu_j\in i\RR,p_j\in\ZZ$ in the even case and $\nu_j\in i\RR,p_j\in 1/2+\ZZ$ in the odd case, and complementary series $\nu_j\in [-2\theta,2\theta],p_j=0$ (only in the even case). Here, $\theta$ is a constant towards the Ramanujan-Petersson conjecture, according to the current state of art (see \cite{BlomerBrumley}), $\theta=7/64$ is admissible.

For some $\mathcal{D}\in U(\frak{g})$ and a smooth vector $\phi\in L^2(\lfact{\GLtwo{2}{F}}{\GLtwo{2}{\AAA}},\omega)$, recalling the spectral decomposition (\ref{spectraldecomposition}),
\begin{equation*}
\phi=\sum_{\pi\in\mathcal{C}_{\omega}}\phi_{\pi}+ \sum_{\chi^2=\omega}\phi_{\chi}+\int_{\mathcal{E}_{\omega}}\phi_{\varpi}d\varpi,
\end{equation*}
we have
\begin{equation}\label{spectraldecompositionofderivatives}
||\mathcal{D}\phi||^2=\sum_{\pi\in\mathcal{C}_{\omega}}||\mathcal{D}\phi_{\pi}||^2+ \sum_{\chi^2=\omega}||\mathcal{D}\phi_{\chi}||^2 +\int_{\mathcal{E}_{\omega}}||\mathcal{D}\phi_{\varpi}||^2d\varpi,
\end{equation}
see \cite[Sections 1.2-4]{CogdellPiatetskiShapiro} with references to \cite{DixmierMalliavin}. Compare (\ref{spectraldecompositionofderivatives}) also with \cite[(33)]{BlomerHarcosDuke} and \cite[(84)]{BlomerHarcos}.

We focus on the local subgroups $\mathrm{SO}_2(\RR)$ (for $j\leq r$) and $\mathrm{SU}_2(\CC)$ (for $j>r$), they are compact, connected and (modulo the center) maximal subgroups of $\GLtwo{2}{\RR}$, $\GLtwo{2}{\CC}$, respectively, with these properties. The corresponding Lie algebras are $\frak{so}_2(\RR)$ and $\frak{su}_2(\CC)$, and define
\begin{equation}\label{definitionofcasimirelementscompact}
\Omega_{\frak{k},j}=\mathbf{R}_j-\mathbf{L}_j, \qquad \Omega_{\frak{k},j}= -\frac{1}{2}(\mathbf{H}_{2,j}^2+\mathbf{W}_{1,j}^2+\mathbf{W}_{2,j}^2),
\end{equation}
at real and complex places, respectively. At a complex place, $\Omega_{\frak{k},j}$ is the Casimir element (see \cite[Definition 9 on p.72]{Sugiura}).

We now define the weight set $W(\pi)$. For $j\leq r$, let $q_j$ be any integer of the same parity as the representation at the corresponding place, with the only restriction $|q_j|\geq 2|\nu_j|+1$ in the discrete series. For $j>r$, let $(l_j,q_j)$ be any pair of numbers satisfying $|q_j|\leq l_j\geq |p_j|$ and $p_j\equiv q_j\equiv l_j\pmod 1$. Now set
\begin{equation}\label{definitionofweight}
\ww=\left(q_1,\ldots,q_r,(l_{r+1},q_{r+1}),\ldots,(l_{r+s},q_{r+s})\right)
\end{equation}
and denote by $W(\pi)$ the set of $\ww$'s satisfying the above condition.

For a given $\ww\in W(\pi)$, we say that $\phi:\GLtwo{2}{\AAA}\rightarrow\CC$ is of weight $\ww$, if for $j\leq r$,
\begin{equation}\label{definitionofcompactcasimirelementactionsreal}
\Omega_{\frak{k},j}\phi=iq_j\phi
\end{equation}
and for $j>r$,
\begin{equation}\label{definitionofcompactcasimirelementactionscomplex}
\mathbf{H}_{2,j}\phi=-iq_j\phi,\qquad \Omega_{\frak{k},j}\phi=\frac{1}{2}(l_j^2+l_j)\phi,
\end{equation}
for the action of $\Omega_{\frak{k},j}$ at complex places, see \cite[Chapter II, Proposition 5.15]{Sugiura}.

Note that $W(\pi)$, through $(q_1,\ldots,q_r,l_{r+1},\ldots,l_{r+s})$, lists all irreducible representations of $K_{\infty}$ occuring in $\pi$, while $(q_{r+1},\ldots,q_{r+s})$ is to single out a one-dimensional space from each such representation.

Similarly, introduce the notation
\begin{equation}\label{definitionofspectralparameter}
\rr= \left(\nu_1\ldots,\nu_r,(\nu_{r+1},p_{r+1}),\ldots,(\nu_{r+s},p_{r+s})\right),
\end{equation}
and also its norm
\begin{equation}\label{definitionofnormofspectralparameter}
\norm{\rr}=\prod_{j=1}^{r}(1+|\nu_j|)\prod_{j=r+1}^{r+s}(1+|\nu_j|+|p_j|)^2,
\end{equation}
compare this with \cite[Section 3.1.8]{MichelVenkatesh}.

\subsection{Cuspidal spectrum}

\paragraph{Analytic conductor, newforms and oldforms}

Let $V_{\pi}$ be a cuspidal representation occuring in $L^2(\lfact{\GLtwo{2}{F}}{\GLtwo{2}{\AAA}},\omega)$. By the tensor product theorem (see \cite[Section 3.4]{Bump} or \cite{Flath}),
\begin{equation}\label{tensorproductdecompositionforcuspidalrepresentations}
V_{\pi}=\bigotimes_{v} V_{\pi_v}
\end{equation}
as a restricted tensor product with respect to the family $\{K_{\frak{p}}(\frak{o}_{\frak{p}})\}$ (by \cite[Theorem 3.3.4]{Bump}, irreducible cuspidal representations are admissible).

For an ideal $\frak{c}\subseteq\frak{c}_{\omega}$ (with $\frak{c}_{\omega}$ standing for the conductor of $\omega$), let
\begin{equation*}
V_{\pi}(\frak{c})=\left\{ \phi\in V_{\pi}: \phi\left(g\begin{pmatrix} a & b \cr c & d\end{pmatrix}\right)=\omega_{\frak{c}}(d)\phi(g),\ \mathrm{if}\ g\in\GLtwo{2}{\AAA},\begin{pmatrix} a & b \cr c & d\end{pmatrix}\in K(\frak{c}) \right\},
\end{equation*}
where $\omega_{\frak{c}}(x)=\prod_{\frak{p}|\frak{c}}\omega_{\frak{p}}(x)$.
Then $\frak{c}'\subseteq\frak{c}$ implies $V_{\pi}(\frak{c}')\supseteq V_{\pi}(\frak{c})$ (see \cite[p.9]{BlomerHarcos}).

By \cite[Corollary 2(a) of Theorem 2]{Miyake}, there is a nonzero ideal $\frak{c}_{\pi}$ such that $V_{\pi}(\frak{c})$ is nontrivial if and only if $\frak{c}\subseteq\frak{c}_{\pi}$. Now the analytic conductor of the representation is defined as
\begin{equation}\label{definitionofanalyticconductor}
C(\pi)=\norm{\frak{c}_{\pi}}\norm{\rr}.
\end{equation}
Introducing also
\begin{equation*}
V_{\pi,\ww}(\frak{c})=\{\phi\in V_{\pi}(\frak{c}): \phi\ \mathrm{is}\ \mathrm{of}\ \mathrm{weight}\ \ww\}
\end{equation*}
for $\ww\in W(\pi)$,
\cite[Corollary 2(b) of Theorem 2]{Miyake} states that for any $\ww\in W(\pi)$, $V_{\pi,\ww}(\frak{c}_{\pi})$ is one-dimensional, that is, restricting $V_{\pi}(\frak{c}_{\pi})$ to $K_{\infty}$, each irreducible representation of $K_{\infty}$ listed in $W(\pi)$ appears with multiplicity one. A nontrivial element of $V_{\pi,\ww}(\frak{c}_{\pi})$ is called a newform of weight $\ww$.

Now consider an ideal $\frak{c}\subseteq\frak{c}_{\pi}$, and take any ideal $\frak{t}$ such that $\frak{tc}_{\pi}\supseteq\frak{c}$. Fixing some finite idele $t\in\AAA_{\fin}^{\times}$ representing $\frak{t}$, we obtain an isometric embedding
\begin{equation}\label{raisinglevel}
R_{\frak{t}}:V_{\pi}(\frak{c}_{\pi})\hookrightarrow V_{\pi}(\frak{c}), \qquad (R_{\frak{t}}\phi)(g)=\phi\left(g\begin{pmatrix}t^{-1} & 0 \cr 0 & 1 \end{pmatrix}\right).
\end{equation}
Then combining \cite[Corollary 2(c) of Theorem 2]{Miyake} with \cite[Corollary on p.306]{Casselman} and (\ref{tensorproductdecompositionforcuspidalrepresentations}), we see the decompositions
\begin{equation*}
V_{\pi}(\frak{c})=\bigoplus_{\frak{t}|\frak{cc}_{\pi}^{-1}} R_{\frak{t}}V_{\pi}(\frak{c}_{\pi}), \qquad V_{\pi,\ww}(\frak{c})=\bigoplus_{\frak{t}|\frak{cc}_{\pi}^{-1}} R_{\frak{t}}V_{\pi,\ww}(\frak{c}_{\pi}),
\end{equation*}
which are not orthogonal in general. However, in Section \ref{sec:lfunctions_rankinselbergconvolution} we will prove that for ideals $\frak{t}_1,\frak{t}_2$, $\langle R_{\frak{t}_1}\phi_1, R_{\frak{t}_2}\phi_2 \rangle=\langle \phi_1, \phi_2 \rangle C(\frak{t}_1,\frak{t}_2,\pi)$, with the constant factor $C(\frak{t}_1,\frak{t}_2,\pi)$ depending only on $\frak{t}_1,\frak{t}_2,\pi$, but not on $\ww$. This allows us to use the Gram-Schmidt method, obtaining complex numbers $\alpha_{\frak{t},\frak{s}}$ (with $\alpha_{\frak{o},\frak{o}}=1$) for any pair of ideals $\frak{s}|\frak{t}|\frak{cc}_{\pi}^{-1}$ such that the isometries
\begin{equation}\label{orthogonaloldforms}
R^{\frak{t}}=\sum_{\frak{s}|\frak{t}} \alpha_{\frak{t},\frak{s}}R_{\frak{s}}:V_{\pi}(\frak{c}_{\pi})\hookrightarrow V_{\pi}(\frak{c}),\qquad \frak{t}|\frak{cc}_{\pi}^{-1},
\end{equation}
give rise to the orthogonal decompositions
\begin{equation}\label{newformoldformdecomposition1}
V_{\pi}(\frak{c})= \bigoplus_{\frak{t}|\frak{cc}_{\pi}^{-1}}R^{\frak{t}}V_{\pi}(\frak{c}_{\pi}),  \qquad V_{\pi,\ww}(\frak{c})=\bigoplus_{\frak{t}|\frak{cc}_{\pi}^{-1}} R^{\frak{t}}V_{\pi,\ww}(\frak{c}_{\pi}).
\end{equation}

\paragraph{Whittaker functions and the Fourier-Whittaker expansion}

For a given $\rr,\ww$ (recall (\ref{definitionofweight}) and (\ref{definitionofspectralparameter})), we define the Whittaker function as the product of Whittaker functions at archimedean places. The important property of these functions is that they are the exponentially decaying eigenfunctions of the Casimir operators $\Omega,\Omega_{\pm}$, therefore, they emerge in the Fourier expansion of automorphic forms (see \cite[Section 3.5]{Bump}).

At real places,
\begin{equation}\label{definitionofrealwhittakerfunction}
\whit_{q,\nu}(y)=\frac{i^{\sign(y)\frac{q}{2}} W_{\sign(y)\frac{q}{2},\nu}(4\pi|y|)}{ (\Gamma(\frac{1}{2}-\nu+\sign(y)\frac{q}{2}) \Gamma(\frac{1}{2}+\nu+\sign(y)\frac{q}{2}))^{1/2}},
\end{equation}
$W$ denoting the classical Whittaker function (see \cite[Chapter XVI]{WhittakerWatson}). This is taken from \cite[(23)]{BlomerHarcos}.

At complex places, we first define the Whittaker function on the positive real axis via
\begin{equation}\label{definitionofcomplexwhittakerfunction}
\begin{split}
\whit_{(l,q),(\nu,p)}(|y|)&= \frac{\sqrt{8(2l+1)}}{(2\pi)^{\Re\nu}}{{2l}\choose{l-q}}^{\frac{1}{2}} {{2l}\choose{l-p}}^{-\frac{1}{2}} \sqrt{\left|\frac{\Gamma(l+1+\nu)}{\Gamma(l+1-\nu)}\right|}\\ &\cdot(-1)^{l-p} (2\pi)^{\nu} i^{-p-q} w^{l}_{q}(\nu,p;|y|),
\end{split}
\end{equation}
where
\begin{equation}\label{definitionofcomplexwhittakerfunctioncomplement1}
w^{l}_{q}(\nu,p;|y|)= \sum_{k=0}^{l-\frac{1}{2}(|q+p|+|q-p|)}(-1)^k \xi^{l}_{p}({q},k)\frac{(2\pi |y|)^{l+1-k}}{\Gamma(l+1+\nu-k)}K_{\nu+l-|q+p|-k}(4\pi |y|),
\end{equation}
$K$ denoting the $K$-Bessel function, and
\begin{equation}\label{definitionofcomplexwhittakerfunctioncomplement2}
\xi^{l}_{p}(q,k)=\frac{k!(2l-k)!}{(l-p)!(l+p)!} {{l-\frac{1}{2}(|q+p|+|q-p|)}\choose{k}} {{l-\frac{1}{2}(|q+p|-|q-p|)}\choose{k}}.
\end{equation}
Then extend this to $y\in\CC^{\times}$ to satisfy
\begin{equation*}
\whit_{(l,q),(\nu,p)}(ye^{i\theta})=e^{-iq\theta}\whit_{(l,q),(\nu,p)}(y),\qquad y\in\CC^{\times},\theta\in\RR.
\end{equation*}
This definition is borrowed from \cite[Section 5]{BruggemanMotohashi} and \cite[Section 4.1]{Lokvenec}, apart from the first line, which is a normalization to gain the right $L^2$-norm.

In both cases, the occuring numbers $\nu,p,q,l$ are those given by the representation and weight data, encoded in the action of the elements $\Omega,\Omega_{\pm},\Omega_{\frak{k}},\mathbf{H}_2$ (recall (\ref{differentialoperatorbasiscomplex}), (\ref{definitionofcasimirelements}), (\ref{definitionofcasimirelementactions}), (\ref{definitionofcasimirelementscompact}), (\ref{definitionofweight}), (\ref{definitionofspectralparameter})).

Finally, define the archimedean Whittaker function as
\begin{equation*}
\whit_{\ww,\rr}(y)=\prod_{j\leq r}\whit_{q_j,\nu_j}(y_j) \prod_{j>r}\whit_{(l_j,q_j),(\nu_j,p_j)}(y_j).
\end{equation*}
With the given normalization, for a fixed $\rr$,
\begin{equation}\label{whittakerfunctionsareorthonormal}
\int_{F_{\infty}^{\times}}\whit_{\ww,\rr}(y)\overline{\whit_{\ww',\rr}}(y) d_{\infty}^{\times}y=\delta_{\ww,\ww'}.
\end{equation}
This can be seen as the product of the analogous results at single places. For real places, see \cite[(25)]{BlomerHarcos} and \cite[Section 4]{BruggemanMotohashinew}. As for complex places, see \cite[Lemma 2]{MPkuznetsov} and \cite[Lemma 8.1 and Lemma 8.2]{MPphd}.

Now we extend \cite[Section 2.5]{BlomerHarcos} to our more general situation. For any $\pi\in\mathcal{C}_{\omega}$, $\frak{c}\subseteq\frak{c}_{\pi}$, any function $\phi\in V_{\pi,\ww}(\frak{c})$ can be expanded into Fourier series as follows. There exists a character $\varepsilon_{\pi}:\{\pm 1\}^r\rightarrow\{\pm 1\}$ depending only on $\pi$ such that
\begin{equation}\label{fourierwhittakerexpansion}
\phi\left( \begin{pmatrix} y & x \cr 0 & 1 \end{pmatrix} \right)= \sum_{t\in F^{\times}}\rho_{\phi}(ty_{\fin}) \varepsilon_{\pi}(\sign(ty_{\infty}))\whit_{\ww,\rr}(ty_{\infty})\psi(tx).
\end{equation}
Note that $\varepsilon_{\pi}$ is not well-defined, if we are in the discrete series and that the coefficient $\varrho(ty_{\fin})$ depends only on the fractional ideal generated by $ty_{\fin}$. Moreover, it is zero, if this fractional ideal is nonintegral. For the proof of this, see \cite[Section 4]{DukeFriedlanderIwaniec}, \cite[Sections 1-3]{KhuriMakdisi} or \cite[Proposition 2.1]{MPphd}.

Now assume that $\frak{c}=\frak{c}_{\pi}$, i.e. $\phi$ is a newform of weight $\ww$. In this case, the coefficients $\varrho_{\pi}(\frak{m})$ are proportional to the Hecke eigenvalues $\lambda_{\pi}(\frak{m})$:
\begin{equation*}
\varrho_{\phi}(\frak{m})= \frac{\lambda_{\pi}(\frak{m})}{\sqrt{\norm{\frak{m}}}}\varrho_{\phi}(\frak{o}).
\end{equation*}
We record
\begin{equation}\label{towardsramanujanpetersson}
\lambda_{\pi}(\frak{m})\ll_{\varepsilon} \norm{\frak{m}}^{\theta+\varepsilon}
\end{equation}
with $\theta=7/64$ \cite{BlomerBrumley}, while according to the Ramanujan-Petersson conjecture, $\theta=0$ is admissible. Also note the multiplicativity relation
\begin{equation}\label{heckeeigenvaluesmultiplicativity}
\lambda_{\pi}(\frak{m})\lambda_{\pi}(\frak{n})= \sum_{\frak{a}|\gcd(\frak{m},\frak{n})} \lambda_{\pi}(\frak{mna}^{-2}).
\end{equation}

Setting
\begin{equation}\label{kirillovvectorsforpureweightforms}
W_{\phi}(y) =\varrho_{\phi}(\frak{o})\varepsilon_{\pi}(\sign(y))\whit_{\ww,\rr}(y),\qquad y\in F_{\infty}^{\times},
\end{equation}
we obtain
\begin{equation}\label{fourierwhittakerexpansion2newforms}
\phi\left( \begin{pmatrix} y & x \cr 0 & 1 \end{pmatrix} \right)= \sum_{t\in F^{\times}} \frac{\lambda_{\pi}(ty_{\fin})}{\sqrt{\norm{ty_{\fin}}}} W_{\phi}(ty_{\infty})\psi(tx).
\end{equation}

\paragraph{The archimedean Kirillov model}

Now fixing $y_{\fin}=(1,1,\ldots)$, we can single out the term corresponding to $t=1$:
\begin{equation}\label{alternativedefinitionforkirillovvector1}
W_{\phi}(y)=\int_{\lfact{F}{\AAA}}\phi\left( \begin{pmatrix} y & x \cr 0 & 1 \end{pmatrix} \right)\psi(-x)dx.
\end{equation}
In the case of arbitrary (i.e. non-necessarily pure weight) smooth functions in $V_{\pi}(\frak{c}_\pi)$, this latter formula can be considered as the definition of the mapping $\phi\mapsto W_{\phi}$, the image is a subspace in $L^2(F_{\infty}^{\times},d_{\infty}^{\times}y)$.

\begin{prop}\label{archimedeankirillovmodelproposition} The image in fact is a dense subspace in $L^2(F_{\infty}^{\times},d_{\infty}^{\times}y)$. Moreover, there is a positive constant $C_{\pi}$ depending only on $\pi$ such that
\begin{equation}\label{archimedeankirillovmodelformula}
\langle \phi_1,\phi_2 \rangle = C_{\pi}\langle W_{\phi_1},W_{\phi_2} \rangle, 
\end{equation}
where the scalar product on the left-hand side is the scalar product in $L^2(\lfact{\GLtwo{2}{F}}{\GLtwo{2}{\AAA}},\omega)$, while on the right-hand side, it is the scalar product in $L^2(F_{\infty}^{\times},d_{\infty}^{\times}y)$.
The map $\phi\mapsto W_{\phi}$ is therefore surjective from $V_{\pi}(\frak{c}_\pi)$ to $L^2(F_{\infty}^{\times},d_{\infty}^{\times}y)$.
\end{prop}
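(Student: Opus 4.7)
The plan is to identify $\phi \mapsto W_\phi$ as a scalar multiple of the classical archimedean Kirillov model of $\pi_\infty$, from which density and (\ref{archimedeankirillovmodelformula}) both follow by a Schur-type argument. First I would combine the tensor product decomposition (\ref{tensorproductdecompositionforcuspidalrepresentations}) with the one-dimensionality of the local newvector subspace at each finite place (this being exactly Miyake's theorem, which is used to define $\frak{c}_\pi$) to produce an isomorphism of $(\frak{g}, K_\infty)$-modules
\[
V_\pi(\frak{c}_\pi) \;\cong\; V_{\pi_\infty} \otimes V_{\pi_\fin}^{\mathrm{new}} \;\cong\; V_{\pi_\infty},
\]
since $V_{\pi_\fin}^{\mathrm{new}}$ is one-dimensional. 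Consequently $V_\pi(\frak{c}_\pi)$ is irreducible as a $\GLtwo{2}{F_\infty}$-module, and any two $\GLtwo{2}{F_\infty}$-invariant Hermitian forms on it are proportional by a positive scalar.

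Next I would observe that (\ref{alternativedefinitionforkirillovvector1}) manifestly defines a $\GLtwo{2}{F_\infty}$-equivariant map from $V_\pi(\frak{c}_\pi)$ into a space of functions on $F_\infty^\times$ carrying the classical Kirillov action of $\pi_\infty$ (see \cite[Ch.~2]{Bump} at real places and \cite{BruggemanMotohashi}, \cite{Lokvenec} at complex places). The classical archimedean Kirillov model theorem asserts that $V_{\pi_\infty}$ admits a unique (up to positive scalar) dense, $L^2$-isometric embedding into $L^2(F_\infty^\times, d_\infty^\times y)$, uniqueness following from Shalika's multiplicity-one theorem. Applying this Schur-lemma argument to $\phi \mapsto W_\phi$, the map is a scalar multiple, say $C_\pi^{1/2}$, of the canonical isometric Kirillov embedding. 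This gives both (\ref{archimedeankirillovmodelformula}) and density of the image; surjectivity then follows since an isometry onto a dense subspace of a Hilbert space has closed range equal to the whole space.

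To see that $C_\pi > 0$ (equivalently, that the map is not identically zero), I would examine a pure weight newform $\phi_{\ww}$. The Fourier-Whittaker expansion (\ref{fourierwhittakerexpansion2newforms}) evaluated with $y_\fin$ trivial at each finite place immediately gives $W_{\phi_{\ww}}(y) = \varrho_{\phi_{\ww}}(\frak{o}) \, \varepsilon_\pi(\sign y) \, \whit_{\ww, \rr}(y)$. By the proportionality $\varrho_{\phi_{\ww}}(\frak{a}) = (\lambda_\pi(\frak{a})/\sqrt{\norm{\frak{a}}}) \, \varrho_{\phi_{\ww}}(\frak{o})$ for newforms, the vanishing of $\varrho_{\phi_{\ww}}(\frak{o})$ would annihilate every coefficient in (\ref{fourierwhittakerexpansion}) and thus force $\phi_{\ww} \equiv 0$. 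Hence $\varrho_{\phi_{\ww}}(\frak{o}) \neq 0$ whenever $\phi_{\ww} \neq 0$, so $W_{\phi_{\ww}} \neq 0$.

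The main technical obstacle is invoking the classical archimedean Kirillov model theorem at complex places, which is more delicate than at real places. Concretely, one needs the density in $L^2(F_\infty^\times, d_\infty^\times y)$ of the orthonormal system $\{\whit_{\ww, \rr}\}_{\ww \in W(\pi)}$ already guaranteed by (\ref{whittakerfunctionsareorthonormal}); at complex places this rests on the explicit local analysis developed in \cite{BruggemanMotohashi} and \cite{Lokvenec}. Once this local input is in place, the global Schur-lemma argument closes the proof without any direct computation of $C_\pi$ (which would otherwise require a careful local factorization of the global Whittaker function at the ramified finite places).
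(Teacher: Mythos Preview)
Your approach is correct but differs from the paper's in both ingredients. For density, the paper argues via irreducibility of the \emph{Borel} action of $B(F_\infty)$ on $L^2(F_\infty^\times,d_\infty^\times y)$ (a Stone--von~Neumann type fact, citing \cite{Knapp} and \cite{Kirillov}): the image is a nonzero closed $B(F_\infty)$-invariant subspace, hence everything. For the constant $C_\pi$, the paper does \emph{not} invoke Schur's lemma; instead it forward-references the Rankin--Selberg unfolding of Section~\ref{sec:lfunctions_rankinselbergconvolution} to establish (\ref{essentialisometryofkirillovmodel}) for pure-weight vectors of a fixed weight $\ww$, handles mixed weights via the orthogonality (\ref{whittakerfunctionsareorthonormal}) of the $\whit_{\ww,\rr}$, and then sums over the weight decomposition of $V_\pi(\frak{c}_\pi)$. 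Your abstract route is conceptually cleaner for the proposition as stated and avoids the forward reference, but it yields no explicit value of $C_\pi$; the paper's Rankin--Selberg computation is not redundant, since the identification of $C_\pi$ with (essentially) $\res_{s=1}L(s,\pi\times\pi)$ in (\ref{proportionalityconstantisessentiallylfunctionresidue}) is needed for Proposition~\ref{residueofrankinselbergsquarelfuncionisaboutone} and the density estimates in Lemma~\ref{densityofcuspidalspectrumgeneral}. One small point worth making explicit: from (\ref{alternativedefinitionforkirillovvector1}) alone the map $\phi\mapsto W_\phi$ is visibly only $B(F_\infty)$-equivariant; the full $\GLtwo{2}{F_\infty}$-equivariance your Schur argument needs comes from factoring the global Whittaker function through the tensor product and invoking local uniqueness of Whittaker models, which you correctly flag as the Shalika input.
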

\begin{proof} On the space $L^2(F_{\infty}^{\times},d_{\infty}^{\times}y)$, the Borel subgroup $B(F_{\infty})$ acts through the Kirillov model action
\begin{equation*}
\left(\begin{pmatrix}y' & x' \cr 0 & 1\end{pmatrix}W_{\phi}\right)(y)= \psi_{\infty}(x'y)W_{\phi}(y'y).
\end{equation*}
This action is irreducible on a single $L^2(\RR^{\times},d_{\RR}^{\times}y)$ or $L^2(\CC^{\times},d_{\CC}^{\times}y)$ (combine \cite[Propositions 2.6 and 2.7]{Knapp} with \cite[p.197]{Kirillov}), so is on their tensor product
\begin{equation*}
L^2(F_{\infty}^{\times},d_{\infty}^{\times}y)= \overline{\bigotimes_{j=1}^{r+s} L^2(F_j^{\times},d_{F_j}^{\times}y_j)}.
\end{equation*}
Then taking some $\phi\in V_{\pi}^{\infty}(\frak{c}_{\pi})$ such that $W_{\phi}$ is not identically zero, a closed, invariant subspace containing $W_{\phi}$ must equal $L^2(F_{\infty}^{\times},d_{\infty}^{\times}y)$, because of irreducibility (the existence of such a $\phi$ follows from the Fourier-Whittaker expansion, which includes harmonics with nonzero coefficients).

In Section \ref{sec:lfunctions_rankinselbergconvolution}, we will prove that if $\phi_1,\phi_2\in V_{\pi,\ww}(\frak{c}_{\pi})$, then
\begin{equation}\label{essentialisometryofkirillovmodel}
\langle \phi_1,\phi_2 \rangle = C_{\pi}\langle W_{\phi_1},W_{\phi_2} \rangle.
\end{equation}
If moreover $\phi_1,\phi_2$ are of different weights $\ww_1\neq\ww_2$, then both sides are $0$, since for pure weight forms, the associated Kirillov vectors are proportional to $\whit_{\rr,\ww_{1,2}}$ (recall (\ref{kirillovvectorsforpureweightforms})), which are orthogonal by (\ref{whittakerfunctionsareorthonormal}). Then the orthogonal decomposition
\begin{equation*}
V_{\pi}(\frak{c}_{\pi})=\bigoplus_{\ww\in W(\pi)}V_{\pi,\ww}(\frak{c}_{\pi})
\end{equation*}
completes the proof.
\end{proof}

Now turn to the general case $\frak{c}\subseteq\frak{c}_{\pi}$. Using the isometries $R^{\frak{t}}$, (\ref{fourierwhittakerexpansion2newforms}) gives rise to, for every $\phi\in R^{\frak{t}}V_{\pi}(\frak{c}_{\pi})$,
\begin{equation}\label{fourierwhittakerexpansion2oldforms}
\phi\left( \begin{pmatrix} y & x \cr 0 & 1 \end{pmatrix} \right)= \sum_{t\in F^{\times}} \frac{\lambda_{\pi}^{\frak{t}}(ty_{\fin})}{\sqrt{\norm{ty_{\fin}}}} W_{\phi}(ty_{\infty})\psi(tx),
\end{equation}
with
\begin{equation}\label{orthogonalizedfouriercofficients}
W_{\phi}=W_{(R^{\frak{t}})^{-1}\phi},\qquad \lambda_{\pi}^{\frak{t}}(\frak{m})=\sum_{\frak{s}|\mathrm{gcd}(\frak{t},\frak{m})} \alpha_{\frak{t},\frak{s}}\norm{\frak{s}}^{1/2}\lambda_{\pi}(\frak{ms}^{-1}).
\end{equation}

\subsection{Eisenstein spectrum}

In this section, we develop the theory of Eisenstein series. Since Eisenstein series will show up only in the spectral decomposition of an automorphic function of trivial central character, we assume temporarily that $\omega=1$. From now on, let $\chi\in\mathcal{E}_{1}$ be a Hecke character which is nontrivial on $F_{\infty,+}^{\diag}$.

\paragraph{Analytic conductor, newforms and oldforms}

Similarly to the cuspidal case, for any ideal $\frak{c}\subseteq\frak{o}$, define
\begin{equation*}
V_{\chi,\chi^{-1}}(\frak{c})=\left\{ \phi\in V_{\chi,\chi^{-1}}: \phi\left(g\begin{pmatrix} a & b \cr c & d\end{pmatrix}\right)=\phi(g),\ \mathrm{if}\ g\in\GLtwo{2}{\AAA},\begin{pmatrix} a & b \cr c & d\end{pmatrix}\in K(\frak{c})\right\}. 
\end{equation*}
Using that $V_{\chi,\chi^{-1}}$ and $H(\chi,\chi^{-1})$ are isomorphic as $\GLtwo{2}{\AAA}$-representations, we have
\begin{equation*}
V_{\chi,\chi^{-1}}(\frak{c})=\{E(\varphi(iy),\cdot)\in V_{\chi,\chi^{-1}}: \varphi\in H(\chi,\chi^{-1},\frak{c})\}
\end{equation*}
with
\begin{equation*}
H(\chi,\chi^{-1},\frak{c})=\left\{ \varphi\in H(\chi,\chi^{-1}): \varphi\left(g\begin{pmatrix} a & b \cr c & d\end{pmatrix}\right)=\varphi(g),\ \mathrm{if}\ \begin{pmatrix} a & b \cr c & d\end{pmatrix}\in K(\frak{c})\right\}. 
\end{equation*}

Analogously to (\ref{tensorproductdecompositionforcuspidalrepresentations}), we have
\begin{equation*}
H(\chi,\chi^{-1})=\bigotimes_{v} H_v(\chi,\chi^{-1}),
\end{equation*}
a restricted tensor product with respect to the family $\{K_{\frak{p}}(\frak{o}_{\frak{p}})\}$ again, the admissibility of $H(\chi,\chi^{-1})$ is straight-forward.

Assume $\chi$ has conductor $\frak{c}_{\chi}$. The following is taken from \cite[Section 2.6]{BlomerHarcos}.

\begin{prop}\label{conductorofeisensteinseries} For any non-archimedean place $\frak{p}$, set $d=v_{\frak{p}}(\frak{d})$ and $m=v_{\frak{p}}(\frak{c}_{\chi})$, and fix some $\varpi$ such that $v_{\frak{p}}(\varpi)=1$. Then for any integer $n\geq 0$, the complex vector space $H_{\frak{p}}(\chi,\chi^{-1},\frak{p}^n)$ has dimension $\max(0,n-2m+1)$. For $n\geq 2m$, an orthogonal basis is $\{\varphi_{\frak{p},j}: 0\leq j\leq n-2m\}$ with functions $\varphi_{\frak{p},j}$ defined as follows.

If $m=0$ and $k=\begin{pmatrix} * & * \cr b\varpi^d & * \end{pmatrix}\in K_{\frak{p}}(\frak{o}_\frak{p})$, let
\begin{equation*}
\varphi_{\frak{p},0}(k)=1;\qquad \varphi_{\frak{p},1}(k)=\left\{\begin{array}{ll} \norm{\frak{p}}^{-1/2}, & \mathrm{if}\ v_{\frak{p}}(b)=0,\cr -\norm{\frak{p}}^{1/2}, & \mathrm{if}\ v_{\frak{p}}(b)\geq 1;\end{array}\right.
\end{equation*}
while for $j\geq 2$,
\begin{equation*}
\varphi_{\frak{p},j}(k)=\left\{\begin{array}{ll} 0, & v_{\frak{p}}(b)\leq j-2,\cr -\norm{\frak{p}}^{j/2-1}, & \mathrm{if}\ v_{\frak{p}}(b)= j-1,\cr \norm{\frak{p}}^{j/2}\left(1-\frac{1}{\norm{\frak{p}}}\right), & \mathrm{if} v_{\frak{p}}(b)\geq j. \end{array}\right.
\end{equation*}

If $m>0$ and $k=\bigl(\begin{smallmatrix} a & * \cr b\varpi^d & * \end{smallmatrix}\bigr)\in K_{\frak{p}}(\frak{o}_\frak{p})$, let
\begin{equation*}
\varphi_{\frak{p},j}(k)=\left\{\begin{array}{ll} \norm{\frak{p}}^{(m+j)/2}\chi_{\frak{p}}(ab^{-1}), & \mathrm{if}\ v_{\frak{p}}(b)=m+j,\cr 0, & \mathrm{if}\ v_{\frak{p}}(b)\neq m+j.\end{array}\right.
\end{equation*}
Moreover,
\begin{equation*}
1-\frac{1}{\norm{\frak{p}}}\leq ||\varphi_{\frak{p},j}||\leq 1.
\end{equation*}
\end{prop}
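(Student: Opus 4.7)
The plan is to use the Iwasawa decomposition $\GLtwo{2}{F_{\frak{p}}} = B(F_{\frak{p}}) K_{\frak{p}}(\frak{o}_{\frak{p}})$ to identify $\varphi \in H_{\frak{p}}(\chi, \chi^{-1})$ with its restriction to $K_{\frak{p}}(\frak{o}_{\frak{p}})$. Since $\chi \cdot \chi^{-1} = 1$, the modulus factor $|a/d|^{1/2}$ in (\ref{definitionofsectionfunctions}) is trivial on $B(\frak{o}_{\frak{p}})$, so this restriction satisfies $\varphi(p k) = \chi_{\frak{p}}(a_{0} d_{0}^{-1}) \varphi(k)$ for any $p = \bigl(\begin{smallmatrix} a_{0} & * \\ 0 & d_{0} \end{smallmatrix}\bigr) \in B(\frak{o}_{\frak{p}}) \cap K_{\frak{p}}(\frak{o}_{\frak{p}})$. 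The additional hypothesis $\varphi \in H_{\frak{p}}(\chi, \chi^{-1}, \frak{p}^{n})$ amounts to right-invariance under $K_{\frak{p}}(\frak{p}^{n})$, so the space is identified with the space of functions on the double coset space $B(\frak{o}_{\frak{p}}) \backslash K_{\frak{p}}(\frak{o}_{\frak{p}}) / K_{\frak{p}}(\frak{p}^{n})$ transforming by the prescribed character on the left.

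I would then parameterize these double cosets by the valuation $j := v_{\frak{p}}(b)$ of the $(2,1)$-entry of $k$ written as $b \varpi^{d}$ with $b \in \frak{o}_{\frak{p}}$. Using elementary row operations via left-multiplication by $B(\frak{o}_{\frak{p}})$ together with right-multiplication by $K_{\frak{p}}(\frak{p}^{n})$ to absorb inessential entries, one sees that the orbits are labelled by $j \in \{0, 1, \ldots, n\}$, where $j = n$ collects all $k$ with $v_{\frak{p}}(b) \geq n$, giving $n+1$ orbits a priori. However, the $B(\frak{o}_{\frak{p}})$-equivariance constrains which orbits are admissible: the stabilizer in $B(\frak{o}_{\frak{p}})$ of the $j$-th orbit, modulo $K_{\frak{p}}(\frak{p}^{n})$, contains diagonal units $\mathrm{diag}(u_{1}, u_{2})$ on which $\chi_{\frak{p}}(u_{1} u_{2}^{-1})$ must be trivial, and the deepest such congruence condition is controlled by $\min(j, n-j)$. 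By definition of the conductor $\frak{p}^{m}$ of $\chi_{\frak{p}}$, this triviality forces $m \leq j \leq n - m$, producing exactly $\max(0, n - 2m + 1)$ admissible orbits, which yields the dimension.

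For the explicit basis and norms, the ramified case $m > 0$ is straightforward: the admissible orbits $\{v_{\frak{p}}(b) = m + j\}$, $0 \leq j \leq n - 2m$, have pairwise disjoint support, so the prescribed orbit functions $\varphi_{\frak{p}, j}(k) = \norm{\frak{p}}^{(m+j)/2} \chi_{\frak{p}}(a b^{-1})$ are automatically orthogonal. Well-definedness of $\chi_{\frak{p}}(a b^{-1})$ on each orbit is precisely the compatibility check carried out in the preceding step, and the normalization is chosen so that a direct Haar-measure computation of each shell in $K_{\frak{p}}(\frak{o}_{\frak{p}})$ yields $1 - \norm{\frak{p}}^{-1} \leq \|\varphi_{\frak{p}, j}\| \leq 1$. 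In the unramified case $m = 0$ the spherical vector $\varphi_{\frak{p}, 0} \equiv 1$ overlaps every shell indicator, so I would run Gram--Schmidt downward from the largest $j$; the signed coefficients $\pm \norm{\frak{p}}^{j/2 - 1}$ and $\norm{\frak{p}}^{j/2}(1 - \norm{\frak{p}}^{-1})$ in the stated formulas arise as ratios of the measures of $\{v_{\frak{p}}(b) = j - 1\}$ and $\{v_{\frak{p}}(b) \geq j\}$. The main obstacle I expect is the bookkeeping in the $m = 0$ case: matching the Gram--Schmidt output to the explicit three-case formula and simultaneously verifying the norm bounds requires careful tracking of cancellations between the measures of adjacent shells, but no genuinely new ingredient beyond the orbit description established above.
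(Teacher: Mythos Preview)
The paper does not give its own proof of this proposition; it simply refers the reader to \cite[Lemma~1 and Remark~7]{BlomerHarcos}. Your sketch is essentially the argument carried out there (and, going back further, in Casselman's work on local newforms): identify $H_{\frak{p}}(\chi,\chi^{-1},\frak{p}^n)$ with functions on $(B(F_{\frak p})\cap K_{\frak p}(\frak o_{\frak p}))\backslash K_{\frak p}(\frak o_{\frak p})/K_{\frak p}(\frak p^n)$ satisfying the prescribed left equivariance, parameterize the double cosets by the valuation of the lower-left entry, and check on which cosets the character descends. So your approach matches the cited source.

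One small point worth tightening in your write-up: the well-definedness check on the $j$-th coset is not literally about the stabilizer of an orbit in $B(\frak o_{\frak p})$, but about the intersection $w_j^{-1}(B(F_{\frak p})\cap K_{\frak p}(\frak o_{\frak p}))w_j\cap K_{\frak p}(\frak p^n)$ for a fixed coset representative $w_j$; one has to verify that the pullback of $\chi_{\frak p}(a_0 d_0^{-1})$ is trivial on this subgroup. When you unwind this you indeed get the conductor condition $m\le j\le n-m$, but the quantity controlling it at the two ends is slightly asymmetric (it is $j$ at one end and $n-j$ at the other, rather than $\min(j,n-j)$ at both), because the upper-triangular and lower-right congruence conditions in $K_{\frak p}(\frak p^n)$ play different roles after conjugation by $w_j$. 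This is a cosmetic issue in the sketch and does not affect the outcome.
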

\begin{proof} See \cite[Lemma 1 and Remark 7]{BlomerHarcos}.
\end{proof}

Therefore, $\frak{c}_{\chi,\chi^{-1}}=(\frak{c}_{\chi})^2$ is the maximal ideal $\frak{c}$ such that $V_{\chi,\chi^{-1}}(\frak{c})$ and $H(\chi,\chi^{-1},\frak{c})$ are nontrivial.

Now turn our attention to the archimedean quasifactors $H_j(\chi,\chi^{-1})$. They are always principal series representations and their parameter $\rr$ is the following. At real places, $\nu_j\in i\RR$ of (\ref{definitionofcasimirelementactions}) is the one satisfying $\chi_j(a)=a^{\nu_j}$ for $a\in\RR_+$ (see \cite[p.83]{BruggemanMotohashinew}). At complex places, $\nu_j\in i\RR$ and $p_j\in\ZZ/2$ of (\ref{definitionofcasimirelementactions}) are those satisfying $\chi_j(ae^{i\theta})=a^{\nu_j}e^{-ip_j\theta}$ for $a\in\RR_+,\theta\in\RR$ (see \cite[Section 3]{BruggemanMotohashi} or \cite[Section 2.3]{Lokvenec}). Now these give rise to the set $W(\chi,\chi^{-1})$ of weights (those occuring in $H_j(\chi,\chi^{-1})$): the only condition is $|q_j|\leq l_j\geq |p_j|$ at complex places.

The analytic conductor is again defined as 
\begin{equation}\label{definitionofanalyticconductor2}
C(\chi,\chi^{-1})=\norm{\frak{c}_{\chi,\chi^{-1}}}\norm{\rr}.
\end{equation}

We can now give an orthogonal basis of $H(\chi,\chi^{-1},\frak{c})$ for any $\frak{c}\subseteq\frak{c}_{\chi}^2$. Given $\frak{t}|\frak{cc}_{\chi}^{-2}$ and any weight $\ww\in W(\chi,\chi^{-1})$, let $\varphi^{\frak{t},\ww}$ be the tensor product of the following local functions. At the archimedean places, let $\varphi_j^{\frak{t},\ww}(k)=\Phi_{q_j}(k)$, $\varphi_j^{\frak{t},\ww}(k)= \Phi_{p_j,q_j}^{l_j}(k)/||\Phi_{p_j,q_j}^{l_j}||_{\mathrm{SU}_2(\CC)}$ for $k\in K_j$ with $j\leq r$, $j>r$, respectively. At non-archimedean places, let $\varphi_{\frak{p}}^{\frak{t},\ww}=\varphi_{\frak{p},v_{\frak{p}}(\frak{t})}$. The global functions form an orthogonal basis of $H(\chi,\chi^{-1},\frak{c})$ and this gives rise to an orthogonal basis in $V_{\chi,\chi^{-1}}$ via the corresponding Eisenstein series $\phi^{\frak{t},\ww}=E(\varphi^{\frak{t},\ww})$. Finally, defining $R^{\frak{t}}:V_{\chi,\chi^{-1}}(\frak{c}_{\chi}^2)\hookrightarrow V_{\chi,\chi^{-1}}(\frak{c})$ as $\phi^{\frak{o},\ww}/||\phi^{\frak{o},\ww}||\mapsto \phi^{\frak{t},\ww}/||\phi^{\frak{t},\ww}||$ for all $\ww$, we obtain the orthogonal decomposition
\begin{equation}\label{newformoldformdecomposition2}
V_{\chi,\chi^{-1}}(\frak{c})=\bigoplus_{\frak{t}|\frak{cc}_{\chi}^{-2}} R^{\frak{t}}V_{\chi,\chi^{-1}}(\frak{c}_{\chi}^2).
\end{equation}

\paragraph{The Fourier-Whittaker expansion and the archimedean Kirillov model}

Similarly to cusp forms, Eisenstein series can also be expanded into Fourier-Whittaker series. Assume $\varphi$ is one of the pure tensors defined above and $\phi=E(\varphi)$, where we dropped $\frak{t}$ and $\ww$ from the notation. Denoting by $\varrho_{E(\varphi,0)}(y)$ the constant term \cite[p.220]{GelbartJacquet}, we obtain the Fourier-Whittaker expansion (see \cite[Sections 2.6 and 2.7]{BlomerHarcos} and \cite[Section 2.4]{MPphd})
\begin{equation}\label{fourierwhittakerexpansionofeisensteinseries}
\phi\left(\begin{pmatrix} y & x \cr 0 & 1\end{pmatrix}\right)= \varrho_{E(\varphi),0}(y)+ \sum_{t\in F^{\times}} \frac{\lambda_{\chi,\chi^{-1}}^{\frak{t}}(ty_{\fin})}{\sqrt{\norm{ty_{\fin}}}} W_{E(\varphi)}(ty_{\infty})\psi(tx)
\end{equation}
where the coefficients satisfy
\begin{equation}\label{boundonheckeeigenvaluesineisensteinspectrum}
\lambda_{\chi,\chi^{-1}}^{\frak{t}}(\frak{m})\ll_{F,\varepsilon} \norm{\gcd(\frak{t},\frak{m})} \norm{\frak{m}}^{\varepsilon},
\end{equation}
for all $\frak{m}\subseteq\frak{o}$. Also,
\begin{equation}\label{estimateofkirillovvectorofeisenstein}
||W_{E(\varphi)}||\ll_{F,\varepsilon} \norm{\frak{t}}^{\varepsilon}C(\chi,\chi^{-1})^{\varepsilon} ||\varphi||,
\end{equation}
where the norms are understood in the spaces $L^2(F_{\infty}^{\times},d_{\infty}^{\times}y)$ and $L^2(K)$ (recall also (\ref{definitionofanalyticconductor2})). Compare these with \cite[(48-50)]{BlomerHarcos}.

The mapping $E(\varphi)\mapsto W_{E(\varphi)}$ has similar properties as in the cuspidal spectrum. In the special case $\frak{c}=\frak{c}_{\chi}^2$, $\frak{t}=\frak{t}_{\chi}=\frak{o}$, $E(\varphi)$ spans the space $V_{\chi,\chi^{-1},\ww}(\frak{c}_{\chi}^2)$ of newforms of weight $\ww$. In this case, we have the alternative definition
\begin{equation}\label{alternativedefinitionforkirillovvector2}
W_{E(\varphi)}(y)=\int_{\lfact{F}{\AAA}} E(\varphi)\left( \begin{pmatrix} y & x \cr 0 & 1 \end{pmatrix} \right)\psi(-x)dx,
\end{equation}
where $y_{\fin}=(1,1,\ldots)$. For $\phi_1,\phi_2\in V_{\chi,\chi^{-1}}(\frak{c}_{\chi}^2)$, we have
\begin{equation}\label{archimedeankirillovmodelformula2}
\langle \phi_1,\phi_2 \rangle = C_{\chi,\chi^{-1}}\langle W_{\phi_1},W_{\phi_2}\rangle
\end{equation}
with some positive constant $C_{\chi,\chi^{-1}}\gg_{F,\varepsilon}C(\chi,\chi^{-1})^{-\varepsilon}$ depending only on $\chi$. Also, $\lambda_{\chi,\chi^{-1}}$ specialize to Hecke eigenvalues.

\subsection{A semi-adelic Kuznetsov formula over number fields} First of all, we introduce some more notations. Given an ideal $\frak{c}$, let
\begin{equation}\label{C(c)E(c)}
\mathcal{C}_{\omega}(\frak{c})=\{\pi\in\mathcal{C}_{\omega}\mid \frak{c}\subseteq\frak{c}_{\pi}\},\qquad \mathcal{E}_{\omega}(\frak{c})=\{\chi\in\mathcal{E}_{\omega}\mid \frak{c}\subseteq\frak{c}_{\chi,\chi^{-1}}\}.
\end{equation}

Now we briefly discuss a variant of the Kuznetsov formula (for details, see \cite[Theorem 1]{MPkuznetsov} or \cite[Theorem 3]{MPphd}) that we will use later, the central character $\omega$ is still assumed to be trivial. In our notation, for a weight function $h$ of the form described below,
\begin{equation}\label{kuznetsovformula}
\begin{split}
\left[K(\frak{o}):K(\frak{c})\right]^{-1} & \sum_{\pi\in\mathcal{C}_{1}(\frak{c})}C_{\pi}^{-1} \sum_{\frak{t}|\frak{cc}_{\pi}^{-1}} h(\rr_{\pi}) \lambda_{\pi}^{\frak{t}}(\alpha\frak{a}^{-1}) \overline{\lambda_{\pi}^{\frak{t}}(\alpha'\frak{a}'^{-1})}+CSC=\\
&\const_F \Delta(\alpha\frak{a}^{-1},\alpha'\frak{a}'^{-1}) \int h(\rr) d\mu+\\ &\const_F \sum_{\frak{m}\in C}\sum_{c\in \frak{amc}}\sum_{\epsilon\in\rfact{\frak{o}_+^{\times}}{\frak{o}^{2\times}}} \frac{KS(\epsilon\alpha,\frak{a}^{-1}\frak{d}^{-1};\alpha'\gamma_{\frak{m}}, \frak{a}'^{-1}\frak{d}^{-1};c,\frak{a}^{-1}\frak{m}^{-1}\frak{d}^{-1}) }{\norm{c\frak{a}^{-1}\frak{m}^{-1}}} \\ & \qquad \qquad \qquad \qquad \qquad \qquad \cdot
\int\mathcal{B}h_{(\rr)}\left(4\pi\frac{(\alpha\alpha'\gamma_\frak{m} \epsilon)^{\frac{1}{2}}}{c}\right) d\mu,
\end{split}
\end{equation}
where $KS$ is a Kloosterman sum, $\mathcal{B}$ is a certain Bessel function, and $d\mu$ is a certain measure of the space of the spectral parameters $\rr$. We explain the notation and the conditions: $\frak{a}^{-1}$ and $\frak{a}'^{-1}$ are nonzero fractional ideals; $\alpha\in\frak{a},\alpha\in\frak{a}'$ such that $\alpha\alpha'$ is totally positive; $C$ is a fixed set of narrow ideal class representatives $\frak{m}$, for which $\frak{m}^2\frak{aa}'^{-1}$ is a principal ideal generated by a totally positive element $\gamma_{\frak{m}}$; $\Delta(\alpha\frak{a}^{-1},\alpha'\frak{a}'^{-1})$ is $1$ if $\alpha\frak{a}^{-1}=\alpha'\frak{a}'^{-1}$, otherwise it is $0$; $CSC$ is an analogous integral over the Eisenstein spectrum.

The weight function $h$ we will use is of the form $h=\prod_jh_j$, where $h_j$'s are defined as follows. Let $a_j,b_j>1,a'_j\in\RR$ be given. Then at real places
\begin{equation}\label{kuznetsovtestfunctionreal}
h_j(\nu_j)=\left\{\begin{array}{ll}e^{(\nu_j^2-\frac{1}{4})/a_j}, & \mathrm{if}\ |\Re\nu_j|<\frac{2}{3}, \cr 1, & \mathrm{if}\ \nu_j\in\frac{1}{2}+\ZZ, \frac{3}{2}\leq|\nu_j|\leq b_j, \cr 0 & \mathrm{otherwise},\end{array}\right.
\end{equation}
while at complex places
\begin{equation}\label{kuznetsovtestfunctioncomplex}
h_j(\nu_j,p_j)=\left\{\begin{array}{ll}e^{(\nu_j^2+a_j'p_j^2-1)/a_j}, & \mathrm{if}\ |\Re\nu_j|<\frac{2}{3}, p_j\in\ZZ, |p_j|\leq b_j, \cr 0 & \mathrm{otherwise}.\end{array}\right.
\end{equation}

For the purpose of this paper, we will choose our parameters as follows. At each place, $a_j>1$ is arbitrary, then set $b_j=\sqrt{a_j}$. Furthermore, at complex places, we use $a_j'=-1$. In this setup, we have the bounds
\begin{equation}\label{estimateoftransformsongeometricside}
\begin{split}
&\int h_j(\nu_j)d\mu_j\ll a_j, \qquad \int (\mathcal{B}_jh_j)_{\nu_j}(t)d\mu_j\ll a_j\min(1,|t|^{1/2});\\ &
\int h_j(\nu_j,p_j)d\mu_j\ll a_j^2, \qquad \int (\mathcal{B}_jh_j)_{(\nu_j,p_j)}(t)d\mu_j\ll a_j\min(1,|t|),
\end{split}
\end{equation}
at real and complex places, respectively (see \cite[pp.124-126]{BruggemanMiatelloPacharoni}, \cite[Section 10]{BruggemanMotohashi} and \cite[Lemma 5.3]{MPphd}).

\subsection{The density of the spectrum}

In this section, we estimate the density of the Eisenstein and the cuspidal spectrum in terms of the spectral parameters. These are the extensions of \cite[Lemma 2 and Lemma 6]{BlomerHarcos}. After the suitable modifications, the proofs given there apply in the more general situation. In this section, we still assume that $\omega=1$, since this is the only case we shall use later.

\paragraph{Density of the Eisenstein spectrum}

\begin{lemm} Let $\frak{c}_1^2\frak{c}_2=\frak{c}\subseteq\frak{o}$, where $\frak{c}_2$ is squarefree. Then for $1\leq X\in\RR$, $1\leq P\in\ZZ$,
\begin{equation*}
\int_{\substack{\varpi\in\mathcal{E}_{1}(\frak{c})\\ |\nu_{\varpi,j}|\leq X\\ |p_{\varpi,j}|\leq P}} 1 d\varpi\ll_F X^{r+s}P^s\norm{\frak{c}_1}.
\end{equation*}
\end{lemm}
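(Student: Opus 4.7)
The plan is to translate the lemma into a counting problem for Hecke characters via class field theory, then use Dirichlet's unit theorem to control the archimedean measure. First I would reduce the conductor condition: by Proposition \ref{conductorofeisensteinseries}, $\frak{c}_{\chi,\chi^{-1}}=\frak{c}_\chi^2$, so $\varpi=\{\chi,\chi^{-1}\}\in\mathcal{E}_1(\frak{c})$ iff $\frak{c}_\chi^2\mid\frak{c}_1^2\frak{c}_2$. Locally this reads $2v_{\frak{p}}(\frak{c}_\chi)\leq 2v_{\frak{p}}(\frak{c}_1)+v_{\frak{p}}(\frak{c}_2)$ with $v_{\frak{p}}(\frak{c}_2)\in\{0,1\}$ by squarefreeness of $\frak{c}_2$, and integrality of $v_{\frak{p}}(\frak{c}_\chi)$ then forces $\frak{c}_\chi\mid\frak{c}_1$. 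Up to a factor of $2$ from passing to ordered characters, it therefore suffices to bound the measure of Hecke characters $\chi$ nontrivial on $F_{\infty,+}^{\diag}$, with $\frak{c}_\chi\mid\frak{c}_1$ and archimedean parameters satisfying $|\nu_j|\leq X$ and $|p_j|\leq P$.

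I would then split $\chi=\chi_\infty\chi_\fin$, with $\chi_\fin$ trivial on the local unit group $U(\frak{c}_1)=\prod_\frak{p}U_{1,\frak{p}}(\frak{c}_1)$. Dualizing the exact sequence
\begin{equation*}
1\to F_\infty^\times/\frak{o}^\times_{\frak{c}_1}\to\AAA^\times/F^\times U(\frak{c}_1)\to \mathrm{Cl}_{\frak{c}_1}\to 1,
\end{equation*}
where $\frak{o}^\times_{\frak{c}_1}=\frak{o}^\times\cap U(\frak{c}_1)$ and $\mathrm{Cl}_{\frak{c}_1}$ is the narrow ray class group, factors the count as (measure of admissible $\chi_\infty$, that is, trivial on $\frak{o}^\times_{\frak{c}_1}$) times $|\mathrm{Cl}_{\frak{c}_1}|$; the latter is $\ll_F\norm{\frak{c}_1}/[\frak{o}^\times:\frak{o}^\times_{\frak{c}_1}]$ by the standard ray class number formula.

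For the archimedean count, at a real place $\chi_{\infty,j}$ is determined by a parity and $\nu_j=i\alpha_j\in i\RR$, and at a complex place by a weight $p_j\in\frac{1}{2}\ZZ$ (contributing $\ll P^s$ under $|p_j|\leq P$) and $\nu_j=i\alpha_j\in i\RR$. By Dirichlet's unit theorem the image $L$ of $\frak{o}^\times_{\frak{c}_1}$ under $u\mapsto(\log|u|_j)_j$ is a rank-$(r+s-1)$ lattice in the trace-zero hyperplane of $\RR^{r+s}$, of covolume $\asymp_F[\frak{o}^\times:\frak{o}^\times_{\frak{c}_1}]$. The constraint $\chi_\infty|_{\frak{o}^\times_{\frak{c}_1}}=1$ places the $\alpha$-vector on a disjoint union of lines in the direction $(1,\ldots,1)$, indexed by $2\pi L^\ast$; each line intersects $\{|\alpha_j|\leq X\}$ in a segment of length $\ll X$, and the number of translates meeting the box is $\ll_F X^{r+s-1}[\frak{o}^\times:\frak{o}^\times_{\frak{c}_1}]$. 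Multiplying out, the indices $[\frak{o}^\times:\frak{o}^\times_{\frak{c}_1}]$ cancel and the total is $\ll_F X^{r+s}P^s\norm{\frak{c}_1}$.

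The main obstacle is normalizing $d\varpi$ correctly: one must verify that the Plancherel measure on the continuous Eisenstein family pulls back, up to an $F$-dependent constant, to Lebesgue measure along the $(1,\ldots,1)$-direction in $\alpha$-coordinates (the $y$-parameter from the definition of $L_{\cont}$ being, up to scale, the sum of the $\alpha_j$). Once this normalization is in hand, the lattice-point count above is routine and the index cancellation makes the bound $F$-sharp.
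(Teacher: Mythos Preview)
Your argument is correct and reaches the same bound, but it is organized differently from the paper's proof. The paper fixes first the restriction of $\chi_{\fin}$ to $\prod_{\frak{p}}\frak{o}_{\frak{p}}^{\times}$, which takes at most $\varphi(\frak{c}_1)\ll\norm{\frak{c}_1}$ values since $\frak{c}_\chi\mid\frak{c}_1$; then, picking one $\chi_0$ with that restriction and twisting by $\chi_0^{-1}$, it reduces to counting characters $\chi'$ unramified everywhere. The archimedean count is then carried out against the \emph{full} totally positive unit group $U^+$ (a fixed lattice depending only on $F$), and the passage from the $d\varpi$-measure to Lebesgue measure in the $(1,\ldots,1)$-direction is exactly the integration over $T=\sum_j\deg[F_j:\RR]t_j$ that the paper writes down. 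Your route instead keeps the congruence subgroup $\frak{o}^{\times}_{\frak{c}_1}$ in the archimedean lattice count and compensates with the ray class number formula, so that the index $[\frak{o}^{\times}:\frak{o}^{\times}_{\frak{c}_1}]$ appears twice and cancels. Both are valid; the paper's version is a little shorter because it never introduces a quantity that later has to cancel, while yours makes the class-field-theoretic structure (the exact sequence and its dual) more explicit. Your flagged ``obstacle'' about the normalization of $d\varpi$ is exactly what the paper handles by integrating over $T$, and it is indeed routine once one observes that the spectral parameter $y$ in the definition of $L_{\cont}$ is, up to an $F$-dependent scalar, the common value of $\Im\nu_j$ along $F_{\infty,+}^{\diag}$.
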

\begin{proof} Any Hecke character $\chi$ can be factorized as $\chi=\chi_{\infty}\chi_{\fin}$. Here, $\chi_{\fin}|_{\prod_{\frak{p}}\frak{o}_{\frak{p}}^{\times}}$ is a character of $\prod_{\frak{p}}\frak{o}_{\frak{p}}^{\times}$. By Proposition \ref{conductorofeisensteinseries}, $\frak{c}_{\chi}|\frak{c}_1$, so there are at most $\varphi(\frak{c}_1)$ possibilities for this restriction. Assume given a character $\xi$ of $\prod_{\frak{p}}\frak{o}_{\frak{p}}^{\times}$, we estimate the measure of the set $S$ of those Hecke characters $\chi$ for which $\chi_{\fin}|_{\prod_{\frak{p}}\frak{o}_{\frak{p}}^{\times}}=\xi$. If $S=\emptyset$, the measure is $0$. If $S\neq\emptyset$, fix some $\chi_0\in S$. Then to any $\chi$ in $S$, associate $\chi'=\chi\chi_0^{-1}$. From the non-archimedan part, we see $\chi'$ is trivial on $\prod_{\frak{p}}\frak{o}_{\frak{p}}^{\times}$. From the archimedan part, we see that for $a\in F_{\infty,+}^{\times}$, $\chi'(a_j)=|a_j|^{t_j}$, if $j\leq r$, and $\chi'(a_j)=|a_j|^{t_j}(a_j/|a_j|)^{p_j}$, if $j>r$, where $t_j\in i[-2X,2X]$, and $p_j\in[-2P,2P]\cap \ZZ$. Fix the vector $(p_j)_{j>r}\in [-2P,2P]^{s}\cap \ZZ^s$.

Now $\chi'_{\infty}$ is trivial on the group $U^+$ of totally positive units embedded in $F_{\infty,+}^{\times}$. Fix a generating set $\{u_1,\ldots,u_{r+s-1}\}$ for the torsion-free part of $U^+$. Then by the notation of \cite{BlomerHarcos}, take
\begin{equation*}
M=\begin{pmatrix}\deg[F_1:\RR] & \ldots & \deg[F_{r+s}:\RR] \cr \deg[F_1:\RR]\log |u_{1,1}| & \ldots & \deg[F_{r+s}:\RR]\log |u_{1,r+s}| \cr \cdot & \ & \cdot \cr \cdot & \ & \cdot \cr \cdot & \ & \cdot \cr \deg[F_1:\RR]\log |u_{r+s-1,1}| & \ldots & \deg[F_{r+s}:\RR]\log |u_{r+s-1,r+s}|\end{pmatrix}\in\RR^{(r+s)\times(r+s)}.
\end{equation*}
Then the column vector $t=(t_j)_j\in i[-2X,2X]^{r+s}$ with $iT=\sum_j\deg[F_j:\RR]t_j$ satisfies $Mt\in i\{T\}\times(2\pi i\ZZ)^{r+s-1}$. Using that $M$ is invertible and depends only on $F$, we see
\begin{equation*}
\int_{-2(r+2s)X}^{2(r+2s)X}\#((\{T\}\times(2\pi i\ZZ)^{r+s-1})\cap Mi[-2X,2X]^{r+s}) dT\ll_F X^{r+s},
\end{equation*}
since the integrand is $O_F(X^{r+s-1})$. Taking into account the finiteness of the torsion subgroup of $U^+$ and of $\lfact{F^{\times}F^{\times}_{\infty,+}\prod_{\frak{p}}\frak{o}_{\frak{p}}^{\times}}{\AAA^{\times}}$, finally summing over $(p_j)_{j>r}\in[-2P,2P]^s$, we obtain the statement.
\end{proof}

\begin{coro}\label{densityofeisensteinspectrum2} Let $\frak{c}_1^2\frak{c}_2=\frak{c}\subseteq\frak{o}$, where $\frak{c}_2$ is squarefree. Then for $1\leq X\in\RR$,
\begin{equation*}
\int_{\substack{\varpi\in\mathcal{E}_{1}(\frak{c})\\ j\leq r:|\nu_{\varpi,j}|\leq X\\ j>r: |\nu_{\varpi,j}^2-p_{\varpi,j}^2|\leq X^2}} 1 d\varpi\ll_F X^{r+2s}\norm{\frak{c}_1}.
\end{equation*}
\end{coro}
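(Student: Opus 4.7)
The plan is to reduce this corollary to the immediately preceding lemma by observing that the region described by the condition $|\nu_{\varpi,j}^2 - p_{\varpi,j}^2| \leq X^2$ at complex places is contained in a product-type box to which the lemma applies directly.

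The key observation concerns the nature of the archimedean spectral parameters for Eisenstein $\varpi$. For a Hecke character $\chi$ (Eisenstein, not coming from a complementary series issue), the local principal series parameters satisfy $\nu_{\varpi,j} \in i\RR$ at every archimedean place, as explained in the discussion preceding equation (\ref{definitionofanalyticconductor2}) where $\chi_j(a e^{i\theta}) = a^{\nu_j} e^{-i p_j \theta}$ forces $\nu_j \in i\RR$, $p_j \in \ZZ/2$. Consequently, at a complex place $j > r$, we have $\nu_{\varpi,j}^2 \leq 0$ and $p_{\varpi,j}^2 \geq 0$, so
\begin{equation*}
|\nu_{\varpi,j}^2 - p_{\varpi,j}^2| = |\nu_{\varpi,j}|^2 + p_{\varpi,j}^2.
\end{equation*}

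Hence the constraint $|\nu_{\varpi,j}^2 - p_{\varpi,j}^2| \leq X^2$ forces both $|\nu_{\varpi,j}| \leq X$ and $|p_{\varpi,j}| \leq X$ separately. Combined with the real-place constraint $|\nu_{\varpi,j}| \leq X$ for $j \leq r$, the domain of integration is contained in the larger region considered by the preceding lemma with the choice $P = \lceil X \rceil$. Applying that lemma,
\begin{equation*}
\int_{\substack{\varpi \in \mathcal{E}_1(\frak{c}) \\ j \leq r: |\nu_{\varpi,j}| \leq X \\ j > r: |\nu_{\varpi,j}^2 - p_{\varpi,j}^2| \leq X^2}} 1\, d\varpi \;\leq\; \int_{\substack{\varpi \in \mathcal{E}_1(\frak{c}) \\ |\nu_{\varpi,j}| \leq X \\ |p_{\varpi,j}| \leq \lceil X \rceil}} 1\, d\varpi \;\ll_F\; X^{r+s} X^s \norm{\frak{c}_1} = X^{r+2s} \norm{\frak{c}_1},
\end{equation*}
which is the desired bound.

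There is essentially no obstacle here: the entire content of the corollary is a change of shape for the archimedean cutoff. The only point requiring any care is to confirm that for Eisenstein $\varpi$ the parameter $\nu_{\varpi,j}$ is purely imaginary at complex places (so that $|\nu^2 - p^2|$ splits cleanly as a sum), which follows from the local classification of the archimedean factors of $H(\chi, \chi^{-1})$ recorded just before equation (\ref{definitionofanalyticconductor2}).
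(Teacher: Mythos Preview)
Your proposal is correct and follows precisely the intended route: the paper states this corollary without proof, as the reduction to the preceding lemma via the observation that $\nu_{\varpi,j}\in i\RR$ for Eisenstein $\varpi$ (so that $|\nu_{\varpi,j}^2-p_{\varpi,j}^2|=|\nu_{\varpi,j}|^2+p_{\varpi,j}^2$) is immediate. Taking $P=\lceil X\rceil\asymp X$ in the lemma gives exactly $X^{r+s}\cdot X^s\norm{\frak{c}_1}=X^{r+2s}\norm{\frak{c}_1}$.
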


\paragraph{Density of the cuspidal spectrum}

Using the Kuznetsov formula, we may estimate the density of the cuspidal spectrum as follows.

\begin{lemm}\label{densityofcuspidalspectrumgeneral} Let $\frak{c}\subseteq\frak{o}$ be an ideal. Then for $1\leq X_j\in\RR^{r+s}$,
\begin{equation*}
\begin{split}
& \sum_{\substack{\varpi\in\mathcal{C}_{1}(\frak{c})\\ j\leq r:|\nu_{\varpi,j}|\leq X_j\\ j>r: |\nu_{\varpi,j}^2-p_{\varpi,j}^2|\leq X_j^2}} \sum_{\frak{t}|\frak{cc}_{\varpi}^{-1}}  |\lambda_{\varpi}^{\frak{t}}(\frak{m})|^2 \ll_{F,\varepsilon}\\ & \left(\prod_{j\leq r} X_j^{2+\varepsilon}\right)\left(\prod_{j>r} X_j^{4+\varepsilon}\right)\norm{\frak{c}}^{1+\varepsilon}+ \left(\prod_j X_j^{2+\varepsilon}\right)(\norm{\gcd(\frak{m},\frak{c})})^{1/2}\norm{\frak{m}}^{1/2+\varepsilon}.
\end{split}
\end{equation*}
\end{lemm}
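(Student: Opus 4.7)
I would apply the Kuznetsov formula (\ref{kuznetsovformula}) with $\alpha\frak{a}^{-1}=\alpha'\frak{a}'^{-1}=\frak{m}$ and test function $h=\prod_j h_j$ chosen from (\ref{kuznetsovtestfunctionreal})--(\ref{kuznetsovtestfunctioncomplex}) with $a_j=X_j^2$ (and $a_j'=-1$ at complex places). By construction $h(\rr_\varpi)\gg 1$ whenever $|\nu_{\varpi,j}|\le X_j$ at real places and $|\nu_{\varpi,j}^2-p_{\varpi,j}^2|\le X_j^2$ at complex places, so the sum we want is dominated by the full $h$-weighted cuspidal sum on the left of (\ref{kuznetsovformula}). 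Using $C_\pi^{-1}\ll_{F,\varepsilon}C(\pi)^{\varepsilon}\ll(\prod_jX_j\cdot\norm{\frak{c}})^{\varepsilon}$ and the trivial $[K(\frak{o}):K(\frak{c})]\ll\norm{\frak{c}}^{1+\varepsilon}$, and noting that with $\alpha\frak{a}^{-1}=\alpha'\frak{a}'^{-1}$ the continuous spectrum contribution $CSC$ is nonnegative and may be discarded from the left, the target sum is bounded by $\norm{\frak{c}}^{1+\varepsilon}(\prod_jX_j\cdot\norm{\frak{m}})^{\varepsilon}$ times the right-hand side of (\ref{kuznetsovformula}).

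For the \emph{diagonal term}, $\Delta\cdot\int h\,d\mu$ contributes via (\ref{estimateoftransformsongeometricside}) a factor $\prod_{j\le r}X_j^2\prod_{j>r}X_j^4$. Combined with $\norm{\frak{c}}^{1+\varepsilon}$ this produces the first summand in the desired bound.

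For the \emph{Kloosterman term}, I would invoke Weil's bound for the number-field Kloosterman sum, giving a factor $\norm{c\frak{a}^{-1}\frak{b}^{-1}}^{1/2+\varepsilon}\norm{\gcd(\alpha\alpha'\gamma_{\frak{b}},c\frak{a}^{-1}\frak{b}^{-1})}^{1/2}$, where $\frak{b}\in C$ now denotes the narrow-class representative (renamed to avoid clash with the ideal $\frak{m}$ of the lemma). Paired with (\ref{estimateoftransformsongeometricside}), the archimedean integral is bounded by $\prod_jX_j^2\cdot\prod_j\min(1,|4\pi(\alpha\alpha'\gamma_{\frak{b}}\epsilon)^{1/2}/c|_j^{e_j})$ with $e_j=1/2$ at real places and $e_j=1$ at complex places. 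Splitting the $c$-sum at each archimedean place according to whether $|c|_j$ exceeds the switching point, the local $\min$-factor precisely cancels the Weil growth $\norm{c}^{1/2}$. After summing over $c$ in the ideal $\frak{ab}\frak{c}$, over $\epsilon\in\rfact{\frak{o}_+^{\times}}{\frak{o}^{2\times}}$ (a finite group), and over the finitely many $\frak{b}\in C$, and tracking the gcd factor $\norm{\gcd(\alpha\alpha'\gamma_{\frak{b}},c\frak{a}^{-1}\frak{b}^{-1})}^{1/2}$ through a standard divisor-type estimate (which localizes exactly at primes dividing $\frak{c}$), one obtains the bound $\prod_jX_j^{2+\varepsilon}\cdot\norm{\gcd(\frak{m},\frak{c})}^{1/2}\norm{\frak{m}}^{1/2+\varepsilon}$. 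This is the second summand.

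\textbf{Main obstacle.} The technically delicate step is the Kloosterman side: one must combine Weil's bound with the archimedean Bessel transforms uniformly across real and complex places, verify that the local $\min$-factor exactly neutralises the $c^{1/2}$-growth from Weil, and keep the bookkeeping of local gcd factors precise enough to recover $\norm{\gcd(\frak{m},\frak{c})}^{1/2}$ rather than a weaker $\norm{\frak{m}}^{\varepsilon}\norm{\frak{c}}^{1/2+\varepsilon}$. The totally real case is \cite[Lemma 6]{BlomerHarcos}; the only genuinely new input is the complex-place Bessel estimate, which is already encapsulated in (\ref{estimateoftransformsongeometricside}), so the argument should go through with the adjustments indicated above.
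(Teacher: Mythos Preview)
Your proposal is correct and follows essentially the same route as the paper: apply the Kuznetsov formula with $a_j=X_j^2$ (and $b_j=X_j$), discard the nonnegative Eisenstein contribution, bound the delta term via (\ref{estimateoftransformsongeometricside}), and treat the Kloosterman term by Weil's bound combined with the archimedean Bessel estimates. The one step you gloss over is the passage from the sum over \emph{elements} $c$ to a sum over principal ideals $(c)$; the paper handles the resulting unit sum via \cite[Lemma 8.1]{BruggemanMiatello}, which is what makes your ``splitting at each archimedean place'' precise and then yields the $\norm{\frak{m}}^{1/2+\varepsilon}$ and $\norm{\gcd(\frak{m},\frak{c})}^{1/2}$ factors after the divisor-type estimate you indicate.
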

\begin{proof} This is the generalization of \cite[Lemma 6]{BlomerHarcos}, we can repeat its proof. Choose a narrow class representative $\frak{n}$ of $\frak{m}^{-1}$ from a fixed set of narrow class representatives. Then for some $\alpha\in F^{\times}$, $\frak{m}=\alpha\frak{n}^{-1}$, and $1\ll_F \norm{(\alpha)}/\norm{\frak{m}}\ll_F 1$. We apply the Kuznetsov formula (\ref{kuznetsovformula}) with $\alpha=\alpha'$, $\frak{a}=\frak{a}'=\frak{n}$, and the weight function is the one described above, setting $a_j=X_j^2$, $b_j=X_j$ at each archimedean place. On the spectral side of the Kuznetsov formula, we obtain an upper bound on the left-hand side of the statement, since the contribution of the Eisenstein spectrum is nonnegative. For $\varpi\in\mathcal{C}_{1}(\frak{c})$, by (\ref{indexofcongruencesubgroup}), (\ref{proportionalityconstantisessentiallylfunctionresidue}) and Proposition \ref{residueofrankinselbergsquarelfuncionisaboutone}, $[K(\frak{o}):K(\frak{c})]C_{\varpi}\ll_{F,\varepsilon} (\prod_jX_j)^{\varepsilon}\norm{\frak{c}}^{1+\varepsilon}$. Then by (\ref{estimateoftransformsongeometricside}), the delta term gives $\ll_{F,\varepsilon}\bigl(\prod_{j\leq r} X_j^{2+\varepsilon}\bigr)\bigl(\prod_{j>r} X_j^{4+\varepsilon}\bigr)\norm{\frak{c}}^{1+\varepsilon}$. As for the Kloosterman term, we use Weil's bound \cite[(13)]{Venkatesh} together with (\ref{estimateoftransformsongeometricside}) to see it is
\begin{equation}\label{densityofcuspidalspectrumgeneralkloostermanterm}
\begin{split}
\ll_{F,\varepsilon} \left(\prod_j X_j^{2+\varepsilon}\right)\norm{\frak{c}}^{1+\varepsilon}\max_{\frak{a}\in C} \sum_{0\neq c\in \frak{nac}} & \frac{\norm{(\gcd(\frak{m},c\frak{n}^{-1}\frak{a}^{-1}))}^{1/2}}{ \norm{c\frak{n}^{-1}\frak{a}^{-1}}^{1/2-\varepsilon}} \\ & \cdot \prod_{j\leq r} \min(1,|\alpha_j/c_j|^{1/2}) \prod_{j>r} \min(1,|\alpha_j/c_j|),
\end{split}
\end{equation}
where $C$ is a fixed set of narrow class representatives (depending only on $F$) such that $\frak{a}^2$ is a totally positive ideal for each $\frak{a}\in C$. Then sum over the elements $c$ can be rewritten to a sum over the principal ideals $(c)$, the sum over the units is estimated in \cite[Lemma 8.1]{BruggemanMiatello}. Then the above display is
\begin{equation*}
\begin{split}
\ll_{F,\varepsilon} \left(\prod_j X_j^{2+\varepsilon}\right)\norm{\frak{c}}^{1+\varepsilon} \max_{\frak{a}\in C }&\sum_{0\neq (c)\subseteq \frak{nac}}  \frac{\norm{\gcd(\frak{m},c\frak{n}^{-1}\frak{a}^{-1})}^{1/2}}{ \norm{c\frak{n}^{-1}\frak{a}^{-1}}^{1/2-\varepsilon}} \\ & \cdot (1+|\log(\norm{(\alpha/c)})|^{r+s-1})\min(1,\norm{(\alpha/c)}).
\end{split}
\end{equation*}
This is obviously
\begin{equation*}
\ll_{F,\varepsilon} \left(\prod_j X_j^{2+\varepsilon}\right)\norm{\frak{c}}^{1+\varepsilon} \norm{\frak{m}}^{1/2+2\varepsilon}\max_{\frak{a}\in C}\sum_{0\neq (c)\subseteq \frak{nac}}\frac{\norm{(\gcd(\frak{m},c\frak{n}^{-1}\frak{a}^{-1}))}^{1/2} }{\norm{(c)}^{1/2+\varepsilon}}.
\end{equation*}
We estimate now the sum. First extend it to all nonzero ideals contained in $\frak{nac}$ (parametrized as $\frak{bnac}$, where $0\neq\frak{b}\subseteq\frak{o}$), then factorize out $\norm{\gcd(\frak{m},\frak{c})}^{1/2}$. We obtain
\begin{equation*}
\frac{1}{\norm{\frak{nac}}^{1+\varepsilon}} \sum_{\frak{b}\subseteq\frak{o}} \frac{\norm{\gcd(\frak{m},\frak{cb})}^{1/2}}{\norm{\frak{b}}^{1+\varepsilon}} \ll_{F,\varepsilon} \frac{\norm{\gcd(\frak{m},\frak{c})}^{1/2}}{\norm{\frak{nac}}^{1+\varepsilon}} \norm{\frak{m}}^{\varepsilon}.
\end{equation*}
Altogether, the contribution of the Kloosterman term is
\begin{equation}\label{densityofcuspidalspectrumgeneralkloostermancontribution}
\ll_{F,\varepsilon} \left(\prod_j X_j^{2+\varepsilon}\right) \norm{\gcd(\frak{m},\frak{c})}^{1/2}\norm{\frak{m}}^{1/2+\varepsilon}.
\end{equation}
Recall that the contribution of the Eisenstein spectrum is nonnegative. \end{proof}

\begin{coro}\label{densityofcuspidalspectrum2}
Let $\frak{c}\subseteq\frak{o}$ be an ideal. Then for $1\leq X_j\in\RR^{r+s}$,
\begin{equation*}
\sum_{\substack{\varpi\in\mathcal{C}_{1}(\frak{c})\\ j\leq r:|\nu_{\varpi,j}|\leq X_j\\ j>r: |\nu_{\varpi,j}^2-p_{\varpi,j}^2|\leq X_j^2}} \sum_{\frak{t}|\frak{cc}_{\pi}^{-1}} 1 \ll_{F,\varepsilon} \left(\prod_{j\leq r} X_j^{2+\varepsilon}\right)\left(\prod_{j>r} X_j^{4+\varepsilon}\right)\norm{\frak{c}}^{1+\varepsilon}.
\end{equation*}
\end{coro}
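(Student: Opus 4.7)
The plan is to deduce the corollary directly from Lemma~\ref{densityofcuspidalspectrumgeneral} by choosing the test ideal to be $\frak{o}$ and then absorbing the inner sum over $\frak{t}$ via a divisor bound. First, I will apply the lemma with $\frak{m}=\frak{o}$: since $\norm{\gcd(\frak{o},\frak{c})}=\norm{\frak{o}}=1$, the second summand on its right-hand side collapses to $\prod_j X_j^{2+\varepsilon}$, which is dominated by the first summand $\prod_{j\leq r} X_j^{2+\varepsilon}\prod_{j>r} X_j^{4+\varepsilon}\norm{\frak{c}}^{1+\varepsilon}$ because $X_j\geq 1$ and $\norm{\frak{c}}\geq 1$.

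Next, I will recover an unweighted count from the weighted spectral sum. Unwinding (\ref{orthogonalizedfouriercofficients}) at $\frak{t}=\frak{m}=\frak{o}$, the only divisor of $\gcd(\frak{o},\frak{o})$ is $\frak{o}$ itself, so $\lambda_\varpi^{\frak{o}}(\frak{o})=\alpha_{\frak{o},\frak{o}}\,\lambda_\varpi(\frak{o})=1$ (using the stated normalization $\alpha_{\frak{o},\frak{o}}=1$ together with $\lambda_\varpi(\frak{o})=1$, which is forced by the multiplicativity relation (\ref{heckeeigenvaluesmultiplicativity})). Restricting the divisor sum in Lemma~\ref{densityofcuspidalspectrumgeneral} to the single term $\frak{t}=\frak{o}$ therefore yields
\begin{equation*}
\sum_{\substack{\varpi\in\mathcal{C}_{1}(\frak{c})\\ j\leq r:|\nu_{\varpi,j}|\leq X_j\\ j>r:|\nu_{\varpi,j}^2-p_{\varpi,j}^2|\leq X_j^2}} 1\;\leq\;\sum_{\varpi}\sum_{\frak{t}\mid \frak{cc}_\varpi^{-1}}|\lambda_{\varpi}^{\frak{t}}(\frak{o})|^2\;\ll_{F,\varepsilon}\;\left(\prod_{j\leq r} X_j^{2+\varepsilon}\right)\left(\prod_{j>r} X_j^{4+\varepsilon}\right)\norm{\frak{c}}^{1+\varepsilon}.
\end{equation*}

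Finally, I will reinsert the sum $\sum_{\frak{t}\mid\frak{cc}_\varpi^{-1}}1$: since $\frak{c}\subseteq\frak{c}_\varpi$, the ideal $\frak{cc}_\varpi^{-1}$ divides $\frak{c}$, so its number of divisors is at most the divisor function of $\frak{c}$, hence $\ll_\varepsilon\norm{\frak{c}}^\varepsilon$. Multiplying and absorbing this factor into $\varepsilon$ gives the claim. I do not anticipate any real obstacle: the hard analytic input (Kuznetsov formula plus Weil's Kloosterman bound) has been packaged inside Lemma~\ref{densityofcuspidalspectrumgeneral}, and what remains is arithmetic bookkeeping; the only point needing attention is the trivial identity $\lambda_\varpi^{\frak{o}}(\frak{o})=1$, which is what lets us pass from the weighted to the unweighted count.
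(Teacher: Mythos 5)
Your proof is correct and is essentially the derivation the paper intends (the corollary is stated without proof as an immediate consequence of Lemma~\ref{densityofcuspidalspectrumgeneral}): specialize the lemma to $\frak{m}=\frak{o}$, note that the second term is dominated by the first, keep only the $\frak{t}=\frak{o}$ term where $\lambda_{\varpi}^{\frak{o}}(\frak{o})=1$ by (\ref{orthogonalizedfouriercofficients}) and the normalization $\lambda_{\varpi}(\frak{o})=1$, and then reinstate the inner sum via $\#\{\frak{t}:\frak{t}|\frak{cc}_{\varpi}^{-1}\}\leq\#\{\frak{t}:\frak{t}|\frak{c}\}\ll_{F,\varepsilon}\norm{\frak{c}}^{\varepsilon}$, absorbing this into the exponent. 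This detour through the single term $\frak{t}=\frak{o}$ is in fact necessary, since $\lambda_{\varpi}^{\frak{t}}(\frak{o})=\alpha_{\frak{t},\frak{o}}$ has no uniform lower bound for $\frak{t}\neq\frak{o}$, so you have handled the one genuinely delicate point correctly.
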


\section{$L$-functions}\label{chap:lfunctions}

\subsection{The constant term of an Eisenstein series}

In this section, we follow \cite[Section 2.8]{BlomerHarcos}. Again, we pay special attention to the complex place, which is not covered there.

For some $s\in\CC$, consider the Hecke quasicharacter $\chi(y)=|y|^s$ for $y\in\AAA^{\times}$. Taking also some nonzero ideal $\frak{c}\subseteq\frak{o}$, define the function $\varphi(s)\in H(\chi,\chi^{-1})$ as
\begin{equation*}
\varphi\left(s,\begin{pmatrix} a & x \cr 0 & b \end{pmatrix}k\right)=\left\{ \begin{array}{ll} |a/b|^{1/2+s}, & k\in K_{\infty}\times K(\frak{c}), \cr 0, & k\in K\setminus(K_{\infty}\times K(\frak{c})). \end{array} \right.
\end{equation*}
The constant term \cite[p.220]{GelbartJacquet} of the corresponding Eisenstein series $E(\varphi(s),g)$ is
\begin{equation}\label{constanttermofspecialeisensteinseries}
E_0(\varphi(s),g)=\varphi(s,g)+ \int_{\AAA}\varphi\left(s,\begin{pmatrix} 0 & -1 \cr 1 & \xi\end{pmatrix}g \right)d\xi.
\end{equation}

\begin{prop}
\begin{equation*}
\int_{\AAA}\varphi\left(s,\begin{pmatrix} 0 & -1 \cr 1 & \xi\end{pmatrix}g \right)d\xi=\frac{\Lambda_F(2s)}{\Lambda_F(2s+1)}H(s,g),
\end{equation*}
where
\begin{equation*}
\Lambda_F(s)=|D_F|^{s/2} \prod_{v\cong\RR}\left(\pi^{-s/2}\Gamma(s/2)\right) \prod_{v\cong\CC}\left(2(2\pi)^{-s}\Gamma(s)\right) \prod_{\frak{p}}(1-\norm{p}^{-s})^{-1}
\end{equation*}
for $\Re s>1$, and $H(s,g)$ is a meromorphic function of $s$, its zeros lie on $\Re s=0$, its poles on $\Re s=-1/2$ and it is constant at $s=1/2$:
\begin{equation*}
H(1/2,g)=|\delta|\norm{\frak{c}}^{-1} \prod_{\frak{p}|\frak{c}}(1+\norm{\frak{p}}^{-1})^{-1}= |D_F|^{-1}[K(\frak{o}):K(\frak{c})]^{-1},
\end{equation*}
where $\delta$ is a finite representing idele for $\frak{d}$.
\end{prop}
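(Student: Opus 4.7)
The integral in question is the standard intertwining integral $M(s)\varphi(g)$, which I would factorize place by place. By construction $\varphi(s)$ is a pure tensor $\otimes_v\varphi_v(s)$ under the restricted tensor decomposition of $H(\chi,\chi^{-1})$, since both its support $B(\AAA)(K_\infty\times K(\frak{c}))$ and its Iwasawa value $|a/b|^{1/2+s}=\prod_v|a_v/b_v|_v^{1/2+s}$ factor over places. Since the Haar measure on $\AAA$ is the product of the local measures (with the archimedean component already carrying the weight $|D_F|^{-1/2}$), we obtain
\begin{equation*}
\int_\AAA \varphi(s, wn(\xi)g)\,d\xi=\prod_v I_v(s,g_v),\qquad I_v(s,g_v):=\int_{F_v}\varphi_v(s,wn(\xi_v)g_v)\,d\xi_v.
\end{equation*}

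At each archimedean place and at unramified $\frak{p}\nmid\frak{c}$, $\varphi_v$ is $K_v$-spherical, and the classical Gindikin--Karpelevich computation (put $wn(\xi_v)$ into Iwasawa form, then integrate shell by shell) yields $I_v(s,g_v)=\frac{\zeta_v(2s)}{\zeta_v(2s+1)}\tilde\varphi_v(s,g_v)$, where $\zeta_v$ is the local factor of $\Lambda_F$ and $\tilde\varphi_v(s,\cdot)$ is the spherical vector in the intertwined induced representation $H(\chi^{-1},\chi)$ --- holomorphic in $s$ and, at $s=1/2$, identically $1$ on $K_v$. At unramified $\frak{p}$ one additionally accounts for the $\diag(\delta_\frak{p}^{-1},1)$-conjugation relating the paper's $K_\frak{p}(\frak{o}_\frak{p})$ to $\GL_{2}(\frak{o}_{\frak{p}})$, which contributes a power of $|\delta_\frak{p}|$ that ultimately assembles into the discriminant factor of $\Lambda_F$. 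At the ramified places $\frak{p}|\frak{c}$, I would decompose $F_\frak{p}$ into annuli $v_\frak{p}(\xi_\frak{p})=k$, use the explicit generators of $K_\frak{p}(\frak{c}_\frak{p})$ to determine precisely those annuli on which $wn(\xi_\frak{p})g_\frak{p}\in B(F_\frak{p})K_\frak{p}(\frak{c}_\frak{p})$, and sum the resulting geometric series in $\norm{\frak{p}}^{-1-2s}$. The outcome is $I_\frak{p}(s,g_\frak{p})=\frac{\zeta_\frak{p}(2s)}{\zeta_\frak{p}(2s+1)}J_\frak{p}(s,g_\frak{p})$, with $J_\frak{p}$ a rational function of $\norm{\frak{p}}^{-s}$ whose zeros and poles lie only on $\Re s\in\{0,-1/2\}$.

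Multiplying everything, the $\zeta_v$-ratios and discriminant factors combine into $\Lambda_F(2s)/\Lambda_F(2s+1)$, leaving a remainder $H(s,g)$ essentially equal to $\prod_v\tilde\varphi_v(s,g_v)\cdot\prod_{\frak{p}|\frak{c}}J_\frak{p}(s,g_\frak{p})$ (up to explicit $|D_F|$-powers). The stated location of the zeros (on $\Re s=0$) and poles (on $\Re s=-1/2$) of $H$ is immediate from the local formulas. For the value at $s=1/2$: the unramified $\tilde\varphi_v(1/2,\cdot)$ are identically $1$ on $K_v$ because the character $|\cdot|^{s-1/2}$ becomes trivial there, while the ramified $J_\frak{p}(1/2,g_\frak{p})$ sum explicitly to $(1+\norm{\frak{p}}^{-1})^{-1}\norm{\frak{c}_\frak{p}}^{-1}$ times the appropriate $|\delta_\frak{p}|$ correction. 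Collecting gives $H(1/2,g)=|\delta|\norm{\frak{c}}^{-1}\prod_{\frak{p}|\frak{c}}(1+\norm{\frak{p}}^{-1})^{-1}$, which indeed does not depend on $g$; the second form $|D_F|^{-1}[K(\frak{o}):K(\frak{c})]^{-1}$ then follows from $|\delta|=\norm{\frak{d}}^{-1}=|D_F|^{-1}$ and the standard index identity $[K(\frak{o}):K(\frak{c})]=\norm{\frak{c}}\prod_{\frak{p}|\frak{c}}(1+\norm{\frak{p}}^{-1})$.

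The main technical obstacle is the ramified local computation at $\frak{p}|\frak{c}$: for each double coset of $g_\frak{p}$ one must identify precisely the $\xi_\frak{p}$-annuli on which the integrand is nonzero and unwind the corresponding Iwasawa data against the $K_\frak{p}(\frak{c}_\frak{p})$-invariance. The bookkeeping is standard but intricate; fortunately, the evaluation at $s=1/2$ is insensitive to $g$, which both simplifies the final formula and provides a useful consistency check.
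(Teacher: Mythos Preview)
Your proposal is correct and follows essentially the same route as the paper: factorize the intertwining integral into local integrals, compute each via the standard Iwasawa/annulus calculation, and assemble. The paper differs only in presentation: rather than speaking of Gindikin--Karpelevich and the intertwined spherical vector $\tilde\varphi_v$, it writes $g=\bigl(\begin{smallmatrix}a&x\\0&b\end{smallmatrix}\bigr)h$, extracts $|a/b|^{1/2-s}|\delta|^{2s}$ up front by the substitution $\xi\mapsto\delta\xi$, and then cites \cite[pp.~22--24]{BlomerHarcos} for the real and non-archimedean local integrals while carrying out the complex place explicitly via the Iwasawa decomposition of $\bigl(\begin{smallmatrix}0&-1\\1&\xi\end{smallmatrix}\bigr)$. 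At the ramified primes the paper handles what you call ``the main technical obstacle'' by reducing $h_\frak{p}$ to exactly two explicit coset representatives, the identity and $\bigl(\begin{smallmatrix}0&-\delta_\frak{p}^{-1}\\ \delta_\frak{p}&\eta_\frak{p}\end{smallmatrix}\bigr)$, and recording the resulting closed formulas; the independence of $H(1/2,g)$ from $g$ then drops out because both evaluate to $\norm{\frak{p}}^{-v_\frak{p}(\frak{c})}$ at $s=1/2$.
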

\begin{proof}
We may write
\begin{equation*}
g=\begin{pmatrix} a & x \cr 0 & b\end{pmatrix}h,\qquad x\in\AAA,a,b\in\AAA^{\times}, h\in\GLtwo{2}{\AAA},
\end{equation*}
where $h_{\infty}\in K_{\infty}$, $h_{\frak{p}}\in K(\frak{o}_{\frak{p}})$ for $\frak{p}\nmid\frak{c}$ and for $\frak{p}|\frak{c}$, $h_{\frak{p}}\in\GLtwo{2}{F_{\frak{p}}}$ is of the form $\bigl(\begin{smallmatrix}1 & 0 \cr 0 & 1\end{smallmatrix}\bigr)$ or $\bigl(\begin{smallmatrix} 0 & -\delta_{\frak{p}}^{-1} \cr \delta_{\frak{p}} & \eta_{\frak{p}}\end{smallmatrix}\bigr)$.

Then our integral becomes
\begin{equation*}
\left|\frac{a}{b}\right|^{1/2-s}|\delta|^{2s} \int_{\AAA}\varphi\left(s,\begin{pmatrix} 0 & -\delta^{-1} \cr \delta & \xi\end{pmatrix}h\right)d\xi,
\end{equation*}
which can be computed as the product of the corresponding local integrals. These are given at \cite[pp.22-24]{BlomerHarcos} for real and non-archimedean places.

First assume $\frak{p}$ is a non-archimedean place. If $\frak{p}\nmid \frak{c}$, then the local integral is
\begin{equation*}
\int_{F_{\frak{p}}}\varphi_{\frak{p}}\left(s,\begin{pmatrix} 0 & -\delta_{\frak{p}}^{-1} \cr \delta_{\frak{p}} & \xi\end{pmatrix}h_{\frak{p}}\right)d\xi= \frac{1-\norm{\frak{p}}^{-1-2s}}{1-\norm{\frak{p}}^{-2s}}.
\end{equation*}
If $\frak{p}|\frak{c}$, then we have two cases:
\begin{equation*}
\int_{F_{\frak{p}}}\varphi_{\frak{p}} \left(s,\begin{pmatrix} 0 & -\delta_{\frak{p}}^{-1} \cr \delta_{\frak{p}} & \xi \end{pmatrix}\right)d\xi= \norm{\frak{p}}^{-2sv_{\frak{p}}(\frak{c})} \frac{1-\norm{\frak{p}}^{-1}}{1-\norm{\frak{p}}^{-2s}},
\end{equation*}
and
\begin{equation*}
\int_{F_{\frak{p}}}\varphi_{\frak{p}} \left(s,\begin{pmatrix} 0 & -\delta_{\frak{p}}^{-1} \cr \delta_{\frak{p}} & \xi \end{pmatrix} \begin{pmatrix} 0 & -\delta_{\frak{p}}^{-1} \cr \delta_{\frak{p}} & \eta_{\frak{p}} \end{pmatrix} \right)d\xi= \norm{\frak{p}}^{-v_{\frak{p}}(\frak{c})}.
\end{equation*}

Now assume $v$ is a real place, then the local integral is
\begin{equation*}
\int_{\RR}\varphi_j\left(s,\begin{pmatrix} 0 & -1 \cr 1 & \xi\end{pmatrix}h_j\right)d\xi=\frac{\Gamma(1/2)\Gamma(s)}{\Gamma(1/2+s)}.
\end{equation*}

Finally, assume $v\cong \CC$. Using the Iwasawa decomposition
\begin{equation*}
\begin{pmatrix} 0 & -1 \cr 1 & \xi \end{pmatrix}= \begin{pmatrix} \frac{1}{\sqrt{|\xi|^2+1}} & \frac{-\overline{\xi}}{\sqrt{|\xi|^2+1}} \cr 0 & \sqrt{|\xi|^2+1} \end{pmatrix} \begin{pmatrix} \frac{\overline{\xi}}{\sqrt{|\xi|^2+1}} & \frac{-1}{\sqrt{|\xi|^2+1}} \cr \frac{1}{\sqrt{|\xi|^2+1}} & \frac{\xi}{\sqrt{|\xi|^2+1}} \end{pmatrix},
\end{equation*}
we can compute that the local integral is
\begin{equation*}
\int_{\CC} \frac{1}{(1+|\xi|^2)^{1+2s}}d\xi =2\pi\frac{\Gamma(1)\Gamma(2s)}{\Gamma(1+2s)}.
\end{equation*}
As for
\begin{equation}\label{indexofcongruencesubgroup}
\norm{\frak{c}}^{-1} \prod_{\frak{p}|\frak{c}}(1+\norm{\frak{p}}^{-1})^{-1}= [K(\frak{o}):K(\frak{c})]^{-1},
\end{equation}
consult \cite[Proposition 2.5]{IwaniecTopics}. Collecting these, the proof is complete.
\end{proof}

\subsection{A Rankin-Selberg convolution}\label{sec:lfunctions_rankinselbergconvolution}

Earlier, we referred to this section twice: in the construction of the isometries $R^{\frak{t}}$ (\ref{orthogonaloldforms}) and in the proof of Proposition \ref{archimedeankirillovmodelproposition}. Now we borrow the Rankin-Selberg method from \cite[pp.25-26]{BlomerHarcos} in order to prove the essential equivalence of the Kirillov model promised earlier (i.e. to complete the proof of Proposition \ref{archimedeankirillovmodelproposition}), and also to relate the proportionality constant to the residue of a certain $\GLone{2}\times\GLone{2}$ $L$-function. We will also obtain that for $\phi_1,\phi_2\in V_{\pi,\ww}(\frak{c}_{\pi})$, $\langle R_{\frak{t}_1}\phi_1,R_{\frak{t}_2}\phi_2\rangle= \langle \phi_1,\phi_2 \rangle C(\frak{t}_1,\frak{t}_2,\pi)$ with a constant $C(\frak{t}_1,\frak{t}_2,\pi)$ independent of $\ww$, this was the fact used in the construction of $R^{\frak{t}}$.

Let $\phi_1,\phi_2\in V_{\pi,\ww}$ be newforms of some weight $\ww\in W(\pi)$ and let $\frak{t}_1,\frak{t}_2\subseteq\frak{o}$ be nonzero ideals. If $\frak{c}$ is a nonzero ideal divisible by $\frak{t}_1\frak{c}_{\pi}$, $\frak{t}_2\frak{c}_{\pi}$, then $\psi_1=R_{\frak{t}_1}\phi_1$, $\psi_2=R_{\frak{t}_2}\phi_2$ are elements in $V_{\pi,\ww}(\frak{c})$.

Define
\begin{equation*}
F(s)=\int_{\lfact{\GLtwo{2}{F}Z(\AAA)}{\GLtwo{2}{\AAA}}} \psi_1(g)\overline{\psi_2(g)}E(\varphi(s),g)dg,
\end{equation*}
where $\varphi(s,g)$ is defined in the previous section. It follows from the theory of Eisenstein series that this integral is absolutely convergent for all $s$ which is not a pole of $E(\varphi(s),g)$ (see \cite[Section 5]{GelbartJacquet}), and also that the possible residue comes from the residue of the constant term (\ref{constanttermofspecialeisensteinseries}). Now we compute $\res_{s=1/2}F(s)$ in two ways.

On the one hand, using the results of the previous section,
\begin{equation}\label{residueinrankinselberg1}
\res_{s=1/2}F(s)=C_F\frac{\langle R_{\frak{t}_1}\phi_1,R_{\frak{t}_2}\phi_2 \rangle}{[K(\frak{o}):K(\frak{c})]},\qquad C_F=\frac{\res_{s=1/2}\Lambda_F(2s)}{|D_F|\Lambda_F(2)}.
\end{equation}

On the other hand, assume first $\Re s>1/2$ for the absolute convergence of (\ref{definitionofeisensteinseries}) (see \cite[p.372]{Bump}) and unfold the integral
\begin{equation*}
\begin{split}
F(s)&= \int_{\lfact{B(F)Z(\AAA)}{\GLtwo{2}{\AAA}}} \psi_1(g)\overline{\psi_2(g)} \varphi(s,g) dg\\ &= \int_{\lfact{F^{\times}}{\AAA^{\times}}} \int_{\lfact{F}{\AAA}} \int_K \psi_1\left(\begin{pmatrix}y & x \cr 0 & 1\end{pmatrix}k\right) \overline{\psi_2\left(\begin{pmatrix}y & x \cr 0 & 1\end{pmatrix}k\right)} \varphi\left(s,\begin{pmatrix}y & x \cr 0 & 1\end{pmatrix}k\right)dkdx\frac{d^{\times}y}{|y|}\\ &= \int_{\lfact{F^{\times}}{\AAA^{\times}}} \int_{\lfact{F}{\AAA}} \int_{K_{\infty}\times K(\frak{c})} \psi_1\left(\begin{pmatrix}y & x \cr 0 & 1\end{pmatrix}k\right) \overline{\psi_2\left(\begin{pmatrix}y & x \cr 0 & 1\end{pmatrix}k\right)} |y|^{s-1/2} dkdxd^{\times}y.
\end{split}
\end{equation*}
Here, the integral over $K_{\infty}\times K(\frak{c})$ is $[K(\frak{o}):K(\frak{c})]^{-1}$. To see this, observe that $\psi_1\overline{\psi_2}$ is invariant at real and non-archimedean places, while at complex places, we apply the more general \cite[Corollary 1.10(b)]{Knapp}. Therefore,
\begin{equation*}
F(s)=\frac{1}{[K(\frak{o}):K(\frak{c})]} \int_{\lfact{F^{\times}}{\AAA^{\times}}} \int_{\lfact{F}{\AAA}} \psi_1\overline{\psi_2}\left(\begin{pmatrix} y & x \cr 0 & 1 \end{pmatrix}\right)|y|^{s-1/2}dxd^{\times}y.
\end{equation*}

Take now finite representing ideles $t_1,t_2$ of the ideals $\frak{t}_1,\frak{t}_2$, respectively. The Fourier-Whittaker expansion (\ref{fourierwhittakerexpansion2newforms}), the definition of $R_{\frak{t}}$ (\ref{raisinglevel}) and $\vol(\lfact{F}{\AAA})=1$ give rise to
\begin{equation*}
\begin{split}
F(s)= \frac{\norm{\frak{t}_1\frak{t}_2}^{1/2}}{[K(\frak{o}):K(\frak{c})]} &\int_{\AAA^{\times}} \frac{\lambda_{\pi}(y_{\fin}t_1^{-1}) \overline{\lambda_{\pi}(y_{\fin}t_2^{-1})} }{\norm{y_{\fin}}} W_{\phi_1}(y_{\infty}) \overline{W_{\phi_2}(y_{\infty})}|y|^{s-1/2}d^{\times}y\\
= \frac{\norm{\frak{t}_1\frak{t}_2}^{1/2}}{[K(\frak{o}):K(\frak{c})]} &\int_{F_{\infty}^{\times}} W_{\phi_1}(y_{\infty})\overline{W_{\phi_2}(y_{\infty})} |y_{\infty}|^{s-1/2}d^{\times}y_{\infty} \\ \cdot &\int_{\AAA_{\fin}^{\times}} \frac{\lambda_{\pi}(y_{\fin}t_1^{-1}) \overline{\lambda_{\pi}(y_{\fin}t_2^{-1})} }{\norm{y_{\fin}}^{1/2+s}}d^{\times}y_{\fin}. 
\end{split}
\end{equation*}
Let now $s\rightarrow 1/2$ from above, then the first integral is $\langle W_{\phi_1},W_{\phi_2} \rangle$, where the inner product is understood in $L^2(F_{\infty}^{\times},d_{\infty}^{\times}y)$. In the second integral, define $\frak{t}_1'=\frak{t}_1\gcd(\frak{t}_1,\frak{t}_2)^{-1}$, $\frak{t}_2'=\frak{t}_2\gcd(\frak{t}_1,\frak{t}_2)^{-1}$, we obtain
\begin{equation}\label{residueinrankinselberg2}
\res_{s=1/2}F(s)= \frac{\langle W_{\phi_1},W_{\phi_2} \rangle}{\norm{\frak{t}_1'\frak{t}_2'}^{1/2}[K(\frak{o}):K(\frak{c})]} \res_{s=1} \sum_{0\neq\frak{m}\subseteq{\frak{o}}} \frac{\lambda_{\pi}(\frak{mt}_2') \overline{\lambda_{\pi}(\frak{mt}_1')}}{\norm{\frak{m}}^s}
\end{equation}
by a linear change of variable $\frak{m}=y_{\fin}t_1t_2\gcd(\frak{t}_1,\frak{t}_2)^{-1}$.

For arbitrary ideals $\frak{t}_1,\frak{t}_2$, this gives
\begin{equation*}
\langle R_{\frak{t}_1}\phi_1,R_{\frak{t}_2}\phi_2\rangle= \langle \phi_1,\phi_2 \rangle C(\frak{t}_1,\frak{t}_2,\pi),
\end{equation*}
where $C(\frak{t}_1,\frak{t}_2,\pi)$ is a constant not depending on the weight $\ww$. This independence of the weight is essential in the construction of $R^{\frak{t}}$ (\ref{orthogonaloldforms}) as we indicated it earlier.

Using the equations (\ref{residueinrankinselberg1}) and (\ref{residueinrankinselberg2}) about $\res_{s=1/2}F(s)$, and taking $\frak{t}_1=\frak{t}_2=\frak{o}$, we obtain (\ref{essentialisometryofkirillovmodel}) with
\begin{equation*}
C_{\pi}=\frac{|D_F|\Lambda_F(2)}{\res_{s=1/2}\Lambda_F(2s)}\res_{s=1} \sum_{0\neq\frak{m}\subseteq{\frak{o}}} \frac{\lambda_{\pi}(\frak{m}) \overline{\lambda_{\pi}(\frak{m})}}{\norm{\frak{m}}^s}.
\end{equation*}
Here, the first factor $|D_F|\Lambda_F(2)/\res_{s=1/2}\Lambda_F(2s)$ is a positive constant depending only on $F$, while
\begin{equation*}
L^{\frak{c}_{\pi}}(s,\pi\times\pi) \zeta_F(2s) \sum_{0\neq\frak{m}\subseteq{\frak{o}}} \frac{\lambda_{\pi}(\frak{m}) \overline{\lambda_{\pi}(\frak{m})}}{\norm{\frak{m}}^s}= L(s,\pi\times\pi)
\end{equation*}
with $L(s,\pi\times\pi)$ defined in \cite[Sections 1-2]{GelbartJacquetgl3} and $L^{\frak{c}_{\pi}}(s,\pi\times\pi)$ is a finite Euler product over places dividing $\frak{c}_{\pi}$, the number of such places is $O_{F,\varepsilon}(\norm{\frak{c}_{\pi}}^{\varepsilon})$. Checking the cases from \cite[Section 1]{GelbartJacquetgl3} and \cite[Chapter I, \S\S{}2-3]{JacquetLanglands}), we obtain
\begin{equation}\label{proportionalityconstantisessentiallylfunctionresidue}
\norm{\frak{c}_{\pi}}^{-\varepsilon}\res_{s=1}L(s,\pi\times\pi) \ll_{F,\varepsilon} C_{\pi} \ll_{F,\varepsilon} \norm{\frak{c}_{\pi}}^{\varepsilon} \res_{s=1}L(s,\pi\times\pi).
\end{equation}
By \cite[Lemma b]{HoffsteinRamakrishnan}, we have a constant $B$ depending only on $F$ such that
\begin{equation*}
C(\pi)^{-B}\leq C(\pi\times\pi)\leq C(\pi)^B
\end{equation*}
holds for the analytic conductors. For later references, we also record
\begin{equation}\label{boundonweightedsumofsquaresofheckeeigenvalues}
\sum_{\norm{\frak{m}}\leq x} \frac{|\lambda_{\pi}(\frak{m})|^2}{\norm{\frak{m}}}\ll_{F,\varepsilon} C(\pi)^{B'}x^{\varepsilon}
\end{equation}
with some $B'$ depending only on $F$, which follows from the upper bound of the above display by a contour integration similar to the one in \cite[Proof of Lemma 2.1]{HoffsteinLockhart}.

\begin{prop}\label{residueofrankinselbergsquarelfuncionisaboutone} We have
\begin{equation*}
C(\pi)^{-\varepsilon}\ll_{F,\varepsilon} \res_{s=1} L(s,\pi\times\pi) \ll_{F,\varepsilon} C(\pi)^{\varepsilon},
\end{equation*}
recall (\ref{definitionofanalyticconductor}).
\end{prop}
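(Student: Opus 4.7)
The plan is to prove the two inequalities separately. The upper bound is a routine convexity argument, whereas the lower bound is the substantive content and will be reduced to the Hoffstein--Lockhart / Hoffstein--Ramakrishnan machinery already invoked in the paper for the analytic conductor comparison.

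For the upper bound, I would first recall that $L(s,\pi\times\pi)$, completed as in \cite{GelbartJacquetgl3}, extends meromorphically to $\CC$ with a unique pole (simple, at $s=1$) and satisfies a functional equation $s\leftrightarrow 1-s$ with conductor $C(\pi\times\pi)\ll_F C(\pi)^B$ via \cite[Lemma b]{HoffsteinRamakrishnan}. On the line $\Re s=1+\varepsilon$ the Dirichlet series converges absolutely and, together with (\ref{boundonweightedsumofsquaresofheckeeigenvalues}), gives $|L(s,\pi\times\pi)|\ll_{F,\varepsilon}1$; on the symmetric line $\Re s=-\varepsilon$ the functional equation combined with Stirling yields a polynomial bound of shape $C(\pi)^{O(1)}(1+|\Im s|)^{O(1)}$. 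Applying Phragm\'en--Lindel\"of to the entire function $(s-1)L(s,\pi\times\pi)$ and then a contour integration around a small circle centered at $s=1$ produces the desired estimate
\begin{equation*}
\res_{s=1}L(s,\pi\times\pi)\ll_{F,\varepsilon}C(\pi)^{\varepsilon}.
\end{equation*}

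For the lower bound, I would invoke the Hoffstein--Lockhart method \cite{HoffsteinLockhart}, in the number-field form of Hoffstein--Ramakrishnan \cite{HoffsteinRamakrishnan}. The standard device is to construct an auxiliary Dirichlet series whose Euler product contains $L(s,\pi\times\pi)$ to a positive power and whose coefficients are nonnegative (typically the Rankin--Selberg square $L(s,(\pi\boxplus\tilde\pi)\times(\pi\boxplus\tilde\pi))$ or an analogous isobaric combination). Positivity, an inverse Mellin transform at an auxiliary point $s_0=1+\eta$, and the functional equation (with conductor controlled by $C(\pi)^{O(1)}$ as above) together force an effective lower bound $\res_{s=1}L(s,\pi\times\pi)\gg_{F,\varepsilon}C(\pi)^{-\varepsilon}$, matching the upper bound in shape.

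The main obstacle is the lower bound, where one must exclude a Landau--Siegel exceptional zero very close to $s=1$ and convert this exclusion into an effective lower bound on the residue with the right $C(\pi)^{-\varepsilon}$ dependence; the upper bound is essentially standard. Since the paper has already cited \cite{HoffsteinRamakrishnan} for the cognate conductor comparison and has in hand the necessary averaged bound (\ref{boundonweightedsumofsquaresofheckeeigenvalues}), I expect the actual proof to consist of little more than quoting these results and verifying that their hypotheses are met in our $\GL_2$ setting over an arbitrary number field.
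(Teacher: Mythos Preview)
Your proposal is correct and matches the paper's approach: the paper's own proof is a one-line citation ``See \cite[Lemma 3]{BlomerHarcos} or \cite[Proposition 3.2]{MPphd}'', which is precisely what you anticipated in your final paragraph. The underlying ingredients are exactly the ones you name --- convexity for the upper bound and the Hoffstein--Lockhart/Hoffstein--Ramakrishnan positivity method for the lower bound --- and these are what \cite[Lemma 3]{BlomerHarcos} invokes.
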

\begin{proof} See \cite[Lemma 3]{BlomerHarcos} or \cite[Proposition 3.2]{MPphd}.
\end{proof}

\section{Sobolev norms}\label{chap:sobolevnorms}

Assume that we are given a smooth automorphic vector $\phi$ appearing in an automorphic representation. The aim of this section is to give a pointwise estimate for the associated Kirillov vector $W_{\phi}$, and, when $\phi$ is a cuspidal newform, the supremum norm of $\phi$, both in terms of some Sobolev norm of $\phi$.

Let $d\geq 0$ be an integer. Assume that $\phi\in L^2(\lfact{\GLtwo{2}{F}}{\GLtwo{2}{\AAA}},\omega)$ is a function such that $X_1\ldots X_d \phi$ exists for every sequence $X_1,\ldots, X_d$, where each $X_k$ is one of those differential operators given in (\ref{differentialoperatorbasisreal}) and (\ref{differentialoperatorbasiscomplex}). Then the Sobolev norm $||\phi||_{S_d}$ of $\phi$ is defined via
\begin{equation*}
||\phi||_{S_d}^2=\sum_{k=0}^{d} \sum_{\{X_1,\ldots,X_k\}\in\{ \mathbf{H}_j,\mathbf{R}_j,\mathbf{L}_j,\mathbf{H}_{1,j}, \mathbf{H}_{2,j},\mathbf{V}_{1,j},\mathbf{V}_{2,j}, \mathbf{W}_{1,j},\mathbf{W}_{2,j}\}^k} ||X_1\ldots X_k\phi||^2.
\end{equation*}

\subsection{Bounds on Bessel functions}

About the classical $J$-Bessel function of parameter $p\in\ZZ/2$, record the bounds
\begin{equation}\label{jbesselestimateformula}
|J_{2p}(x)|\leq 1\ \mathrm{for}\ \mathrm{all}\ x\in(0,\infty), \qquad |J_{2p}(x)|\ll x^{-1/2}\ \mathrm{for}\ \mathrm{all}\ x\in(\max(1/2,(2p)^2),\infty),
\end{equation}
see \cite[2.2(1)]{Watson} and \cite[8.451(1-8)]{GR}.

Now we define and estimate a function $j$ that later will turn out to be the Bessel function of a certain representation (after a simple transformation of the argument).

\begin{lemm}\label{representationbesselestimatelemma}
Assume $\nu\in\CC$ and $p\in\ZZ/2$ are given such that either $\Re\nu=0$ (principal series) or $\Re\nu\neq 0$, $\Im\nu=0$, $|\nu|\leq 2\theta= 7/32$, $p=0$ (complementary series). Define
\begin{equation}\label{definitionofrepresentationbessel}
j(t)= 4\pi|t|^2 \int_0^{\infty}y^{2\nu} \left(\frac{yt+y^{-1}\overline{t}}{|yt+y^{-1}\overline{t}|}\right)^{2p} J_{2p}(2\pi|yt+y^{-1}\overline{t}|)d_{\RR}^{\times}y.
\end{equation}
Then $j(t)$ is an even function of $t\in\CC^{\times}$ satisfying the bound
\begin{equation}\label{representationbesselestimateformula}
j(t)\ll |t|^2(1+|t|^{-1/2})(1+|p|).
\end{equation}
\end{lemm}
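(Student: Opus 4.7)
The evenness comes from a direct substitution: under $t\mapsto -t$ the quantity $yt+y^{-1}\overline{t}$ is negated and its modulus preserved, so the phase factor to the $2p$-th power picks up $(-1)^{2p}$ while $J_{2p}$ and $|t|^2$ are unchanged. Thus $j(-t)=(-1)^{2p}j(t)$, which gives the stated evenness when $p\in\ZZ$ (and yields $j(-t)=-j(t)$ when $p\in\tfrac12+\ZZ$); in either case $|j|$ is symmetric, which is all that is needed for (\ref{representationbesselestimateformula}).

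For the bound, I would first simplify the integral. Writing $t=|t|e^{i\alpha}$ and substituting $y=e^\beta$ (so $d_\RR^\times y=d\beta$), a short computation gives $yt+y^{-1}\overline{t}=2|t|\cosh(\beta+i\alpha)$, whence
$$|j(t)|\ \leq\ 4\pi|t|^2\int_{-\infty}^{\infty} e^{2\Re\nu\cdot\beta}\bigl|J_{2p}\bigl(4\pi|t|\,|\cosh(\beta+i\alpha)|\bigr)\bigr|\,d\beta.$$
The identity $|\cosh(\beta+i\alpha)|^2=\sinh^2\beta+\cos^2\alpha$ supplies the useful lower bound $|\cosh(\beta+i\alpha)|\geq|\sinh\beta|$ together with the growth $|\cosh(\beta+i\alpha)|\asymp e^{|\beta|}/2$ as $|\beta|\to\infty$.

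The next step is to promote (\ref{jbesselestimateformula}) into the uniform bound
$$|J_{2p}(x)|\ \ll\ \min\!\bigl(1,\ (1+|p|)\,x^{-1/2}\bigr),\qquad x>0.$$
For $x>(2p)^2$ this is the asymptotic bound of (\ref{jbesselestimateformula}); for $1\leq x\leq(2p)^2$ the trivial bound $|J_{2p}|\leq 1$ is majorised by $2|p|/\sqrt{x}$ because $\sqrt{x}\leq 2|p|$; for $x\leq 1$ the minimum is simply $1$. With this in hand I would split the $\beta$-integration at $|\beta|=\beta_0:=\mathrm{arcsinh}\bigl(1/(4\pi|t|)\bigr)$, so that $|\beta|>\beta_0$ forces the Bessel argument $\geq 1$ via $|\cosh(\beta+i\alpha)|\geq|\sinh\beta|$. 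On $|\beta|\leq\beta_0$, using $|J_{2p}|\leq 1$ and the harmless factor $e^{2\Re\nu\beta}$ (bounded since $|\Re\nu|\leq 7/32$), one gets a contribution $\ll|t|^2\beta_0\,e^{O(\beta_0)}$, which is $\ll|t|^{3/2}$ as $|t|\to 0$ (since $\beta_0\sim\log(1/|t|)$) and $\ll|t|$ as $|t|\to\infty$ (since $\beta_0\sim 1/(4\pi|t|)$). On $|\beta|>\beta_0$, the uniform bound yields
$$|t|^2(1+|p|)|t|^{-1/2}\!\int_{|\beta|>\beta_0}\! e^{2\Re\nu\cdot\beta}|\sinh\beta|^{-1/2}\,d\beta,$$
where the remaining integral is $O(1)$ when $\beta_0\leq 1$ and $O(|t|^{1/2-\varepsilon})$ when $\beta_0$ is large, thanks to the decay $|\sinh\beta|^{-1/2}\asymp e^{-|\beta|/2}$ dominating the mild growth $e^{2\Re\nu\beta}$ with $|\Re\nu|\leq 7/32<1/2$. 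Adding the two regions produces $|j(t)|\ll(1+|p|)(|t|^2+|t|^{3/2})$, which is exactly (\ref{representationbesselestimateformula}).

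The main obstacle is producing the $(1+|p|)$ prefactor: the inputs from (\ref{jbesselestimateformula}) have no explicit $|p|$-dependence, so one must patch them together in the intermediate range $1<x\leq(2p)^2$ by hand, as above. Once the uniform Bessel bound is in place the rest is a routine calculus split, with the only small point being to ensure that in the complementary series ($\Re\nu\neq 0$) the exponential $e^{2\Re\nu\beta}$ is absorbed by the decay of $|\sinh\beta|^{-1/2}$, which works because $|\Re\nu|<1/2$.
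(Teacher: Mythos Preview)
Your proof is correct and reaches the same conclusion, but the route differs from the paper's in two places worth noting.

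First, the paper works directly in the variable $y$, uses the symmetry $y\leftrightarrow 1/y$ (valid verbatim only when $\Re\nu=0$) to restrict to $y\geq 1$, and then splits $[1,\infty)$ into three pieces $[1,2]\cup[2,\max(4p^2/(\pi|t|),2)]\cup[\max(4p^2/(\pi|t|),2),\infty)$, applying respectively $|J_{2p}|\leq 1$, $|J_{2p}|\leq 1$, and $|J_{2p}(x)|\ll x^{-1/2}$. This produces an intermediate bound with a $\log(4p^2/(\pi|t|))$ term, which is then absorbed into $(1+|p|)$. The complementary series is handled separately with a minor modification.

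You instead pass to $\beta=\log y$ and the identity $yt+y^{-1}\overline{t}=2|t|\cosh(\beta+i\alpha)$, package the two Bessel inputs into the single uniform estimate $|J_{2p}(x)|\ll\min(1,(1+|p|)x^{-1/2})$, and split once according to whether the Bessel argument exceeds $1$. This is a cleaner organization: it treats the principal and complementary series simultaneously, avoids the intermediate logarithm, and makes the $(1+|p|)$ factor appear directly from the patching of the two Bessel bounds rather than from absorbing a $\log$.

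Two small remarks on your write-up. The parenthetical ``bounded since $|\Re\nu|\leq 7/32$'' is misleading, since on $|\beta|\leq\beta_0$ with $\beta_0\to\infty$ the factor $e^{2\Re\nu\beta}$ is not bounded; what you actually use (and correctly record as $e^{O(\beta_0)}$) is that the exponent is at most $2|\Re\nu|\beta_0\leq(7/16)\beta_0$, so that $|t|^2\beta_0 e^{2|\Re\nu|\beta_0}\ll|t|^{2-7/16-\varepsilon}\ll|t|^{3/2}$ as $|t|\to 0$. It would be clearer to say this explicitly. Second, your observation that $j(-t)=(-1)^{2p}j(t)$, hence $j$ is odd for half-integer $p$, is correct and in fact sharper than the paper's assertion; only the symmetry of $|j|$ is needed downstream, as you note.
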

\begin{proof} It is clear that $j(t)=j(-t)$, so we are left to prove (\ref{representationbesselestimateformula}). Assume first that $p\neq 0$, which implies that we are in the principal series. Then trivially
\begin{equation*}
j(t)\ll |t|^2 \int_0^{\infty} |J_{2p}(2\pi|yt+y^{-1}\overline{t}|)|d_{\RR}^{\times}y.
\end{equation*}
The integral is invariant under $y\leftrightarrow 1/y$, so we have
\begin{equation*}
j(t)\ll |t|^2 \int_1^{\infty} |J_{2p}(2\pi|yt+y^{-1}\overline{t}|)|d_{\RR}^{\times}y.
\end{equation*}
Here
\begin{equation*}
\int_1^2 |J_{2p}(2\pi|yt+y^{-1}\overline{t}|)|d_{\RR}^{\times}y\ll 1
\end{equation*}
and
\begin{equation*}
\int_2^{\max\left(\frac{4p^2}{\pi |t|},2\right)} |J_{2p}(2\pi|yt+y^{-1}\overline{t}|)|d_{\RR}^{\times}y\ll \max\left(\log\left(\frac{4p^2}{\pi |t|}\right),0\right)
\end{equation*}
by $|J_{2p}(x)|\leq 1$ of (\ref{jbesselestimateformula}). On the remaining domain, $y\geq 2$, hence $|yt+y^{-1}\overline{t}|\geq y|t|/2$. Moreover, since $y\geq 4p^2/(\pi |t|)$, we have $2\pi|yt+y^{-1}\overline{t}|\geq (2p)^2>1/2$, so we may apply $|J_{2p}(x)|\ll x^{-1/2}$ of (\ref{jbesselestimateformula}), obtaining
\begin{equation*}
\int_{\max\left(\frac{4p^2}{\pi |t|},2\right)}^{\infty} |J_{2p}(2\pi|yt+y^{-1}\overline{t}|)|d_{\RR}^{\times}y \ll 1+|t|^{-1/2}.
\end{equation*}
Altogether,
\begin{equation*}
j(t)\ll |t|^2\left(1+|t|^{-1/2}+\max\left(\log\left(\frac{4p^2}{\pi |t|}\right),0\right)\right),
\end{equation*}
which obviously implies
\begin{equation}\label{representationbesselestimateprincipalseries}
j(t)\ll |t|^2(1+|t|^{-1/2})(1+|p|).
\end{equation}

If $p=0$, in particular, in the complementary series, a similar calculation yields (using also that $2|\Re\nu|\leq 7/16$) 
\begin{equation}\label{representationbesselestimatecomplementaryseries}
j(t)\ll |t|^2(1+|t|^{-1/2}).
\end{equation}
(This time the integral might not be invariant under $y\leftrightarrow 1/y$, however, replacing $y^{2\Re\nu}$ by $y^{2|\Re\nu|}$, we may write $\int_1^{\infty}$ in place of $\int_0^{\infty}$; and the domain of integration $[1,\infty]$ is splitted up as $[1,2]\cup[2,\max(1/|t|,2)]\cup[\max(1/|t|,2),\infty]$.)

Collecting the bounds (\ref{representationbesselestimateprincipalseries}), (\ref{representationbesselestimatecomplementaryseries}),
we arrive at (\ref{representationbesselestimateformula}).
\end{proof}

\subsection{Bounds on Whittaker functions}

We would like to give estimates on the Whittaker functions defined in (\ref{definitionofrealwhittakerfunction}) and (\ref{definitionofcomplexwhittakerfunction}). At real places, we refer to \cite{BlomerHarcosDuke}.
\begin{lemm}\label{realwhittakerestimatelemma}
For all $\nu$,
\begin{equation}\label{realwhittakerestimateformula1}
\whit_{q,\nu}(y)\ll |y|^{1/2} \left(\frac{|y|}{|q|+|\nu|+1}\right)^{-1-|\Re\nu|} \exp\left(-\frac{|y|}{|q|+|\nu|+1}\right).
\end{equation}
For $\nu\in (\ZZ/2)\cup i\RR$ and for any $0<\varepsilon<1/4$,
\begin{equation}\label{realwhittakerestimateformula2}
\whit_{q,\nu}(y)\ll_{\varepsilon} |y|^{1/2-\varepsilon}(|q|+|\nu|+1).
\end{equation}
For $\nu\in (-1/2,1/2)$ and for any $0<\varepsilon<1$,
\begin{equation}\label{realwhittakerestimateformula3}
\whit_{q,\nu}(y)\ll_{\varepsilon} |y|^{1/2-|\nu|-\varepsilon}(|q|+|\nu|+1)^{1+|\nu|}.
\end{equation}
\end{lemm}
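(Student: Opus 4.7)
The plan is to split by size of $|y|$ relative to the parameter scale $T := |q|+|\nu|+1$, pulling all three bounds out of the standard representations of the classical Whittaker function $W_{\kappa,\nu}$ (with $\kappa = \sign(y)q/2$) together with Stirling estimates on the gamma normalization in (\ref{definitionofrealwhittakerfunction}).

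For $|y|\gg T$ the exponential decay of $W_{\kappa,\nu}$ dominates. Starting from a uniform integral representation of the form
\begin{equation*}
W_{\kappa,\nu}(x) = \frac{e^{-x/2}x^{\kappa}}{\Gamma(1/2+\nu-\kappa)}\int_0^{\infty} e^{-t}\, t^{\nu-\kappa-1/2}\left(1+\frac{t}{x}\right)^{\nu+\kappa-1/2}dt
\end{equation*}
(using the symmetric version $\nu\mapsto-\nu$ wherever this is singular, in particular across the discrete-series range $|q|\geq 2|\nu|+1$), substituting $x=4\pi|y|$ and combining the resulting $t$-integral with the Stirling evaluation of $(\Gamma(1/2\pm\nu+q/2))^{1/2}$ produces exactly (\ref{realwhittakerestimateformula1}). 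In this range (\ref{realwhittakerestimateformula2}) and (\ref{realwhittakerestimateformula3}) are automatic, since the exponential factor in (\ref{realwhittakerestimateformula1}) overwhelms any polynomial correction.

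For $|y|\ll T$ one uses the small-argument expansion
\begin{equation*}
W_{\kappa,\nu}(x) = \frac{\Gamma(-2\nu)}{\Gamma(1/2-\nu-\kappa)}M_{\kappa,\nu}(x) + \frac{\Gamma(2\nu)}{\Gamma(1/2+\nu-\kappa)}M_{\kappa,-\nu}(x),\qquad 2\nu\notin\ZZ,
\end{equation*}
where $M_{\kappa,\nu}(x)=x^{1/2+\nu}e^{-x/2}M(1/2+\nu-\kappa,1+2\nu,x)$ and the confluent hypergeometric $M$ stays bounded by $1+O(|x|)$ for $|x|\ll T^{\varepsilon}$. For $\nu\in i\RR$ both terms are of size $|y|^{1/2}$, and Stirling applied to the gamma quotient together with the normalization gives a polynomial factor $\ll T^{1+|\Re\nu|}$, yielding (\ref{realwhittakerestimateformula2}); for real $\nu\in(-1/2,1/2)$ the $M_{\kappa,-\nu}$-term dominates and supplies the extra $|y|^{-|\nu|}$ of (\ref{realwhittakerestimateformula3}). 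The discrete-series case $\nu\in\ZZ/2$ of (\ref{realwhittakerestimateformula2}) is handled by the logarithmic limit of the above identity, which contributes only a harmless $|y|^{-\varepsilon}$. Finally, for $|y|\ll T$ the right-hand side of (\ref{realwhittakerestimateformula1}) exceeds $|y|^{1/2-|\Re\nu|}T^{1+|\Re\nu|}$, so (\ref{realwhittakerestimateformula1}) also follows from the small-$|y|$ estimates.

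The main obstacle is maintaining uniformity in $q$ and $\nu$ when they grow simultaneously: the gamma factors $\Gamma(1/2\pm\nu+q/2)$ in the normalization and those inside the Mellin-type formulas must be kept away from their poles so that Stirling applies cleanly. This is exactly where the representation-theoretic constraints enter: the discrete-series restriction $|q|\geq 2|\nu|+1$ forces both arguments $1/2\pm\nu+q/2$ to be positive, while the complementary-series restriction $|\nu|\leq\theta<1/2$ keeps $1/2\pm\nu$ uniformly bounded away from $0$. These uniform gamma estimates are carried out in detail in \cite{BlomerHarcosDuke}, to which the author defers.
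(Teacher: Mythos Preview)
Your proposal is correct and aligns with the paper: the paper's own proof is simply a citation to \cite[(24--26)]{BlomerHarcosDuke} (and \cite[(26--28)]{BlomerHarcos}), and you sketch precisely the argument carried out there---splitting at $|y|\asymp T$, using the integral representation of $W_{\kappa,\nu}$ for large $|y|$ and the $M_{\kappa,\pm\nu}$ expansion for small $|y|$, with Stirling controlling the gamma normalization---before deferring to the same reference for the uniform details.
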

\begin{proof} See \cite[(24-26)]{BlomerHarcosDuke} (and also \cite[(26-28)]{BlomerHarcos}).
\end{proof}

At complex places, introduce
\begin{equation}\label{jacquettransform0}
\J_{(l,q),(\nu,p)}(y)=\whit_{(l,q),(\nu,p)}(y) \left(\frac{\sqrt{8(2l+1)}}{(2\pi)^{\Re\nu}}{{2l}\choose{l-q}}^{\frac{1}{2}} {{2l}\choose{l-p}}^{-\frac{1}{2}} \sqrt{\left|\frac{\Gamma(l+1+\nu)}{\Gamma(l+1-\nu)}\right|}\right)^{-1},
\end{equation}
the unnormalized Whittaker function appearing in \cite[Section 5]{BruggemanMotohashi} and \cite[Section 4.1]{Lokvenec}; our function $\J_{(l,q),(\nu,p)}(y)$ is the same as $\J_1\varphi_{l,q}(\nu,p)(a(y))$ in \cite{Lokvenec}. The advantage of this unnormalized function is its regularity in $\nu$. Note that $\J_{(l,q),(\nu,p)}$ is nothing else but (\ref{definitionofcomplexwhittakerfunction}) without its first line.
\begin{lemm}\label{complexwhittakerpreliminaryestimatelemma}
For $0<|y|\leq 1$ and $\varepsilon>0$,
\begin{equation}\label{complexwhittakerpreliminaryestimateformula1}
\whit_{(l,q),(\nu,p)}(y)\ll_{\varepsilon} |y|^{1-|\Re\nu|-\varepsilon}(1+|p|+l)^{1+|p|/2}.
\end{equation}
For $|y|\geq (l^4+1)(|\nu|^2+1)$,
\begin{equation}\label{complexwhittakerpreliminaryestimateformula2}
\whit_{(l,q),(\nu,p)}(y)\ll \exp\left(-\frac{|y|}{|\nu|+l+1}\right).
\end{equation}
\end{lemm}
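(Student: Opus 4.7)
The plan is to prove both bounds by direct term-by-term analysis of the finite series (\ref{definitionofcomplexwhittakerfunctioncomplement1}) defining $w^l_q(\nu,p;|y|)$, combining classical asymptotics of the $K$-Bessel function with crude estimates of the combinatorial coefficients and gamma quotients appearing in (\ref{definitionofcomplexwhittakerfunction}).

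For bound (\ref{complexwhittakerpreliminaryestimateformula1}), the starting point is the small-argument estimate $K_\mu(x)\ll_\varepsilon x^{-|\Re\mu|-\varepsilon}$ valid on $0<x\le 1$, which uniformly absorbs the logarithmic singularity of $K_0$ into the $\varepsilon$. Applied to the $k$-th summand, this contributes $(2\pi|y|)^{l+1-k}|y|^{-|l-|q+p|-k+\Re\nu|-\varepsilon}$ up to the coefficient $\xi^l_p(q,k)$ and the factor $\Gamma(l+1+\nu-k)^{-1}$. A case split on the sign of $l-|q+p|-k+\Re\nu$, combined with the summation range $0\le k\le l-\tfrac{1}{2}(|q+p|+|q-p|)$, shows that in every term the exponent of $|y|$ is bounded below by $1-|\Re\nu|-\varepsilon$. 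The combinatorial and gamma factors are then bounded polynomially: the normalisation factor $\sqrt{2l+1}\,\binom{2l}{l-q}^{1/2}\binom{2l}{l-p}^{-1/2}\sqrt{|\Gamma(l+1+\nu)/\Gamma(l+1-\nu)|}$ multiplied with $\xi^l_p(q,k)$ produces the claimed $(1+|p|+l)^{1+|p|/2}$, the $|p|/2$ arising specifically from $\binom{2l}{l-p}^{-1/2}$ via the identity $\binom{2l}{l-p}=\frac{(2l)!}{(l-p)!(l+p)!}$.

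For bound (\ref{complexwhittakerpreliminaryestimateformula2}), the key tool is the large-argument asymptotic $K_\mu(x)\ll \exp(-x/2)$, valid whenever $x$ exceeds a fixed constant multiple of $|\mu|^2+1$. Under the hypothesis $|y|\ge(l^4+1)(|\nu|^2+1)$ the argument $4\pi|y|$ comfortably exceeds $(|\nu|+l+1)^2$ for every index $\mu=\nu+l-|q+p|-k$ appearing in the sum, so each Bessel factor contributes $\exp(-2\pi|y|)$. The polynomial prefactor $(2\pi|y|)^{l+1-k}$, the combinatorial coefficient, and the gamma quotient $\sqrt{|\Gamma(l+1+\nu)/\Gamma(l+1-\nu)|}\le e^{O(|\nu|)}$ (by Stirling) are all subexponential in $|y|$ in this range, so they are absorbed once we weaken $\exp(-2\pi|y|)$ down to the much milder $\exp(-|y|/(|\nu|+l+1))$ claimed.

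I expect the main obstacle to be the small-$|y|$ bound: the case analysis on $\Re\nu+l-|q+p|-k$ and the tight accounting of $\binom{2l}{l-p}^{-1/2}$ against the other $l$- and $p$-dependent constants need to be done carefully so that the final exponent $1+|p|/2$ is not overshot. The large-$|y|$ bound is then comparatively routine, since the Bessel exponential has enormous room to spare relative to the target decay rate.
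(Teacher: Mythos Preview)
Your proposal is essentially correct, and for (\ref{complexwhittakerpreliminaryestimateformula1}) it matches the paper's approach: the paper simply cites \cite[(4.28)]{Lokvenec} for the small-$|y|$ bound on the unnormalised function $\J_{(l,q),(\nu,p)}$ and then observes that the normalisation (\ref{jacquettransform0}) contributes the factor $(1+|p|+l)^{1+|p|/2}$, which is exactly your plan carried out.

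For (\ref{complexwhittakerpreliminaryestimateformula2}) your route differs from the paper's and is in fact simpler. The paper does \emph{not} separate $K$ from the gamma factor in the denominator of (\ref{definitionofcomplexwhittakerfunctioncomplement1}); instead it treats the ratio $K_{\nu+a}(4\pi|y|)/\Gamma(b+1+\nu)$ (after a reindexing step ensuring $0\le a\le b\le l$) via Basset's integral representation of $K$, which produces a compensating $\Gamma(\nu+a+1/2)$ in the numerator so that Stirling makes the gamma ratio $O(1)$, and then a contour shift to $\Im t=-(|\nu|+a+2)^{-1}$ extracts decay of order $\exp(-c|y|/(|\nu|+l+1))$. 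Your approach, by contrast, pulls out the much stronger $K_\mu(4\pi|y|)\ll e^{-2\pi|y|}$ and absorbs every remaining factor into the enormous gap between $e^{-2\pi|y|}$ and the weak target $e^{-|y|/(|\nu|+l+1)}$. This works, but you should be explicit about one factor you did not list among ``the polynomial prefactor, the combinatorial coefficient, and the gamma quotient'': the denominator $\Gamma(l+1+\nu-k)$ inside the sum (\ref{definitionofcomplexwhittakerfunctioncomplement1}). In the principal series $|\Gamma(l+1-k+i\Im\nu)|^{-1}$ grows like $e^{\pi|\Im\nu|/2}$ by Stirling, which is not polynomial; however, since $|\nu|\le|y|^{1/2}$ in the stated range, this is still $e^{O(|y|^{1/2})}$ and is absorbed along with your other subexponential factors. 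The paper's Basset pairing avoids ever seeing this exponential, at the cost of a more delicate argument; your method trades that delicacy for one extra line verifying that $e^{\pi|\nu|/2}\cdot e^{-2\pi|y|}\ll e^{-|y|/(|\nu|+l+1)}$ in the given range.
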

\begin{proof}
It is clear from the definition and the fact $|\Re\nu|\leq 7/32$ that
\begin{equation*}
\whit_{(l,q),(\nu,p)}(y)\ll \J_{(l,q),(\nu,p)}(y)(1+|p|+l)^{1+|p|/2}.
\end{equation*}
Together with \cite[(4.28)]{Lokvenec}, this shows the bound (\ref{complexwhittakerpreliminaryestimateformula1}). As for (\ref{complexwhittakerpreliminaryestimateformula2}), take $|y|\geq (l^4+1)(|\nu|^2+1)\geq 1$. We first estimate $\J_{(l,q),(\nu,p)}$ from its expression in terms of $K$-Bessel functions (recall (\ref{definitionofcomplexwhittakerfunctioncomplement1}) and (\ref{definitionofcomplexwhittakerfunctioncomplement2})). We trivially have
\begin{equation*} \xi_p^l(q,k),(2\pi|y|)^{l+1-k},
(1+l)(1+|p|+l)^{1+|p|/2} \ll e^{|y|/(3(|\nu|+l+1))}
\end{equation*}
for the binomial factor, for the power of $|y|$, and for the summation over $k$ together with the transition factor from $\J_{(l,q),(\nu,p)}$ to $\whit_{(l,q),(\nu,p)}$.
Now we would like to estimate
\begin{equation*}
\frac{K_{\nu+l-|q+p|-k}(4\pi|y|)}{\Gamma(l+1+\nu-k)},
\end{equation*}
where $0\leq k\leq l-\max(|p|,|q|)$. Instead of this, we may write
\begin{equation*}
\frac{K_{\nu+a}(4\pi|y|)}{\Gamma(b+1+\nu)},
\end{equation*}
where $0\leq a\leq b\leq l$: in the principal series $\Re\nu=0$, this is justified by $K_s(x)=K_{-s}(x)$ (see \cite[3.7(6)]{Watson}) and $|\Gamma(x)|=|\Gamma(\overline{x})|$, hence take $b=l-k$, then $a=|l-k-|q+p||$ (and we conjugate $\nu$, if $l-k<|q+p|$), $0\leq a\leq b\leq l$ follows from the constraint on $k$; while in the complementary series, $p=0$ implies $l-|q+p|-k\geq 0$, from which $0\leq a\leq b\leq l$ is satisfied by setting $b=l-k$, $a=l-k-|q+p|$. By Basset's integral \cite[\textsection 6.16]{Watson},
\begin{equation*}
\frac{K_{\nu+a}(4\pi|y|)}{\Gamma(b+1+\nu)}=\frac{\Gamma(\nu+a+1/2)}{\Gamma(\nu+b+1)} \frac{1}{2\sqrt{\pi}(2\pi|y|)^{\nu+a}} \int_{-\infty}^{\infty}\frac{e^{-i4\pi|y|t}}{(1+t^2)^{\nu+a+1/2}}dt.
\end{equation*}
From Stirling's formula, we see that the quotient of the $\Gamma$-factors is $O(1)$. As for the rest, integrating by parts, then shifting the contour to $\Im t=-(|\nu|+a+2)^{-1}$ (similarly as in \cite[(4.2-5)]{BruggemanMotohashinew}),
\begin{equation*}
\frac{1}{2\sqrt{\pi}(2\pi|y|)^{\nu+a}} \int_{-\infty}^{\infty}\frac{e^{-i4\pi|y|t}}{(1+t^2)^{\nu+a+1/2}}dt \ll \frac{|\nu|+a+1}{|y|^{\nu+a-1}} \exp\left(\frac{-(3+1/3)\pi|y|}{|\nu|+a+1}\right).
\end{equation*}
Here, $|\nu|+a+1\ll |y|^{1/2}$, so as above,
\begin{equation*}
|\nu|+a+1\ll e^{|y|/(|\nu|+a+1)},\qquad |y|^{-\nu-a+1}\ll e^{|y|/(3(|\nu|+a+1))},
\end{equation*}
giving
\begin{equation*}
\frac{K_{\nu+l-|q+p|-j}(4\pi|y|)}{\Gamma(l+1+\nu-j)}\ll \exp\left(-\frac{2|y|}{|\nu|+l+1}\right).
\end{equation*}
Altogether
\begin{equation*}
\whit_{(l,q),(\nu,p)}(y)\ll \exp\left(-\frac{|y|}{|\nu|+l+1}\right)
\end{equation*}
as claimed.
\end{proof}

Now borrowing an idea from \cite[p.330]{BlomerHarcosDuke}, we give a further bound on $\whit_{(l,q),(\nu,p)}$.
\begin{lemm}\label{complexwhittakerestimatelemma}
For all $y\in\CC^{\times}$,
\begin{equation}\label{complexwhittakerestimateformula}
\whit_{(l,q),(\nu,p)}(y)\ll (|y|^{3/4}+|y|)(l^4+1)(|\nu|^2+1)(|p|+1).
\end{equation}
\end{lemm}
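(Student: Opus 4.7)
The plan is to split the analysis into two regimes on $|y|$ and use qualitatively different arguments in each. In the large regime $|y|\geq(l^4+1)(|\nu|^2+1)$, Lemma~\ref{complexwhittakerpreliminaryestimatelemma}(2) gives $\whit_{(l,q),(\nu,p)}(y)\ll\exp(-|y|/(|\nu|+l+1))$. Since the right-hand side of (\ref{complexwhittakerestimateformula}) is at least $|y|$ in this range and $|y|^N\exp(-|y|/(|\nu|+l+1))$ is bounded for any fixed $N$ by the constraint on $|y|$, the claim follows trivially there. The substantive part of the proof is the moderate range $0<|y|\leq(l^4+1)(|\nu|^2+1)$, where we also need to cover the small-$|y|$ regime of Lemma~\ref{complexwhittakerpreliminaryestimatelemma}(1) without paying the loss $(1+|p|+l)^{1+|p|/2}$.

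For the moderate regime, I will adapt the idea of \cite[p.~330]{BlomerHarcosDuke}: rather than estimating the explicit expression (\ref{definitionofcomplexwhittakerfunctioncomplement1})--(\ref{definitionofcomplexwhittakerfunctioncomplement2}), where the sum and the Gamma ratios introduce polynomial losses in $l+|p|$ of exponent $|p|/2$, I would rewrite $\whit_{(l,q),(\nu,p)}(y)$ as a Bessel transform against the representation-theoretic Bessel function $j$ of Lemma~\ref{representationbesselestimatelemma}. Such an identity arises from the Kirillov model action of the principal or complementary series, as developed in \cite[Section~5]{BruggemanMotohashi} and \cite[Section~4.1]{Lokvenec}, and takes schematically the form
\begin{equation*}
\whit_{(l,q),(\nu,p)}(y)\;=\;N(l,\nu,p)\int_{0}^{\infty}K_{l,q}(t)\,j(ty)\,d_{\RR}^{\times}t,
\end{equation*}
with elementary test data $K_{l,q}(t)$ depending polynomially on $l,q$ and a normalization $N(l,\nu,p)$ collecting the prefactor on the first line of (\ref{definitionofcomplexwhittakerfunction}). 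Substituting the uniform bound (\ref{representationbesselestimateformula}), i.e.\ $j(ty)\ll|ty|^{2}(1+|ty|^{-1/2})(1+|p|)$, and executing the $t$-integration splits at the transition $|ty|=1$, producing the factor $(|y|^{3/4}+|y|)$: the $|y|^{3/4}$ from the $|ty|^{-1/2}$ piece and $|y|$ from the $|ty|^{2}$ piece. The factor $(1+|p|)$ comes directly from (\ref{representationbesselestimateformula}), which is the improvement over Lemma~\ref{complexwhittakerpreliminaryestimatelemma}(1).

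It then remains to bound the normalization by $N(l,\nu,p)\ll(l^4+1)(|\nu|^2+1)$. Reading off the first line of (\ref{definitionofcomplexwhittakerfunction}), the Gamma quotient $\sqrt{|\Gamma(l+1+\nu)/\Gamma(l+1-\nu)|}$ is $\ll(l+|\nu|+1)^{|\Re\nu|}$ by Stirling, which is controlled by $(|\nu|^2+1)^{1/2}(l+1)^{1/2}$ since $2|\Re\nu|\leq 7/16\leq 1$; the remaining factor $\sqrt{8(2l+1)}$ together with the binomial ratio $\binom{2l}{l-q}^{1/2}\binom{2l}{l-p}^{-1/2}$ is absorbed into $K_{l,q}$ and estimated against $(l^4+1)^{1/2}$. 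The main obstacle will be the clean derivation of this integral representation and the bookkeeping of the normalizing constants, in particular showing that the binomial factors do not leave behind any $q$-dependent loss after being paired with the Bessel kernel, and that the complementary series case $\Re\nu\neq 0$ does not produce extra growth beyond the stated $(|\nu|^2+1)$.
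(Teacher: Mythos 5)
Your reduction of the range $|y|\geq(l^4+1)(|\nu|^2+1)$ to Lemma \ref{complexwhittakerpreliminaryestimatelemma} is fine (indeed trivial, since there the exponential is $\leq 1\leq$ the right-hand side), but the substantive part of your argument rests on an integral representation that does not exist in the form you assert, and the paper's actual proof is structured precisely so as to avoid the bookkeeping you propose. The identity available (from Bruggeman--Motohashi, quoted in the paper as the special Jacquet--Langlands functional equation) is
\begin{equation*}
\whit_{(l,q),(\nu,p)}(y)=\kappa(p,l,q)\,\pi\int_{\CC^{\times}}j(\sqrt{t})\,\whit_{(l,-q),(\nu,p)}(t/y)\,d^{\times}_{\CC}t,\qquad |\kappa(p,l,q)|=1,
\end{equation*}
in which the integrand contains the Whittaker function itself (with weight $-q$), not ``elementary test data $K_{l,q}(t)$ depending polynomially on $l,q$'' with a separate normalization $N(l,\nu,p)$. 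The Bessel function of the representation intertwines Kirillov vectors with Kirillov vectors; it does not express $\whit_{(l,q),(\nu,p)}$ as a Hankel-type transform of elementary data, and your schematic identity is therefore a missing ingredient, not a derivable one. Moreover, your plan to bound the normalization by $(l^4+1)(|\nu|^2+1)$ fails outright: the prefactor in (\ref{definitionofcomplexwhittakerfunction}) contains the ratio ${{2l}\choose{l-q}}^{1/2}{{2l}\choose{l-p}}^{-1/2}$, which is exponentially large in $l$ for generic $(p,q)$ (e.g.\ $p=l$, $q=0$ gives roughly $2^{l}$), so it cannot be ``absorbed into $K_{l,q}$ and estimated against $(l^4+1)^{1/2}$''. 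The whole point of the Blomer--Harcos--Duke trick you cite is that one never estimates this prefactor at all.

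What the paper does instead: starting from the displayed functional equation, split the $t$-integral at $|t|=|y|(l^4+1)(|\nu|^2+1)$. On the inner range apply Cauchy--Schwarz; the factor $\bigl(\int|\whit_{(l,-q),(\nu,p)}(t/y)|^2 d^{\times}_{\CC}t\bigr)^{1/2}$ is $\leq 1$ by the $L^2$-normalization (\ref{whittakerfunctionsareorthonormal}) -- this is exactly where all the $l,q$-dependent prefactors disappear -- while $\bigl(\int_{|t|<|y|(l^4+1)(|\nu|^2+1)}|j(\sqrt t)|^2 d^{\times}_{\CC}t\bigr)^{1/2}\ll\max(|y|,|y|^{3/4})(l^4+1)(|\nu|^2+1)(|p|+1)$ by (\ref{representationbesselestimateformula}); note the factor $(l^4+1)(|\nu|^2+1)$ enters through the length of this range, not through any prefactor. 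On the outer range $|t/y|\geq(l^4+1)(|\nu|^2+1)$ one uses the exponential decay (\ref{complexwhittakerpreliminaryestimateformula2}) together with (\ref{representationbesselestimateformula}), contributing only $\max(|y|,|y|^{3/4})(|\nu|+l+1)(|p|+1)$. Your proposal identifies the right kernel bound and the right shape of the answer, but without the functional equation carrying the Whittaker function on both sides and the $L^2$-normalization step, the argument as written cannot be completed.
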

\begin{proof}
Our starting point is a special Jacquet-Langlands functional equation
\begin{equation}\label{specialjacquetlanglandsfunctionalequation}
\whit_{(l,q),(\nu,p)}(y)=\kappa(p,l,q)\pi  \int_{\CC^{\times}}j(\sqrt{t})\whit_{(l,-q),(\nu,p)}(t/y)d^{\times}_{\CC}t,
\end{equation}
where $j$ is defined in (\ref{definitionofrepresentationbessel}) and $|\kappa(p,l,q)|=1$. This is proved in \cite[Theorem 2 and (3)]{BruggemanMotohashi13} in a different formulation, one is straight-forward from the other using \cite[(2.30), (2.43) and (4.2)]{Lokvenec}. Note that in \cite{BruggemanMotohashi13}, it is stated only for the principal series (i.e. $\Re\nu=0$) and even (i.e. $p\in\ZZ$) representations, but the result extends to the complementary series by analytic continuation, the odd case can be handled similarly (see \cite[p.90]{BruggemanMotohashi13}). Also note that $j(\sqrt{t})$ does not lead to confusion, since $j(t)$ is an even function of $t$ (by Lemma \ref{representationbesselestimatelemma}).

In (\ref{specialjacquetlanglandsfunctionalequation}), split up the integral as
\begin{equation*}
\begin{split}
\whit_{(l,q),(\nu,p)}(y)\ll& \overbrace{\int_{0<|t|<|y|(l^4+1)(|\nu|^2+1)}j(\sqrt{t})\whit_{(l,-q),(\nu,p)}(t/y) d^{\times}_{\CC}t}^{\mathrm{I}}\\ &+\overbrace{\int_{|t|\geq |y|(l^4+1)(|\nu|^2+1)}j(\sqrt{t})\whit_{(l,-q),(\nu,p)}(t/y)d^{\times}_{\CC}t}^{\mathrm{II}}.
\end{split}
\end{equation*}
First estimate $\mathrm{I}$. Using Cauchy-Schwarz,
\begin{equation*}
\begin{split}
\mathrm{I}\ll &\left(\int_{0<|t|<|y|(l^4+1)(|\nu|^2+1)}|j(\sqrt{t})|^2d^{\times}_{\CC}t\right)^{1/2} \\ &\cdot \left(\int_{0<|t|<|y|(l^4+1)(|\nu|^2+1)}|\whit_{(l,-q),(\nu,p)}(t/y)|^2d^{\times}_{\CC}t\right)^{1/2}.
\end{split}
\end{equation*}
The second factor is at most $1$, since the Whittaker functions have $L^2$-norm $1$ (recall (\ref{whittakerfunctionsareorthonormal}) and the remark after that). In the first factor, we may apply (\ref{representationbesselestimateformula}), giving
\begin{equation*}
\mathrm{I} \ll
\max(|y|,|y|^{3/4})(l^4+1)(|\nu|^2+1)(|p|+1). 
\end{equation*}
In the second term $\mathrm{II}$, we apply Lemma \ref{complexwhittakerpreliminaryestimatelemma} together with (\ref{representationbesselestimateformula}). This gives
\begin{equation*}
\mathrm{II} \ll
\max(|y|,|y|^{3/4})(|\nu|+l+1)(|p|+1).
\end{equation*}
Summing up, we arrive at (\ref{complexwhittakerestimateformula}).
\end{proof}

From (\ref{whittakerfunctionsareorthonormal}), we know that the square-integral of a Whittaker function is $1$. The next lemma encapsulates the fact that a Whittaker function cannot concentrate to a neighborhood of $0$ or $\infty$. To formulate it properly, we introduce the notation, for any $a\in \RR^{r+s}$,
\begin{equation*}
S(a)=\left\{y=(y_1,\ldots,y_{r+s})\in\RR^{r+s}: \left\{\begin{array}{ll} |y_j|> |a_j|, & \mathrm{for}\ \mathrm{all}\ j\leq r,\cr y_j> |a_j|, & \mathrm{for}\ \mathrm{all}\ j>r\end{array}\right.\right\}.
\end{equation*}

\begin{lemm}\label{whittakerfunctionsdonotconcentrate} There exist some positive constants $C_0,C_1$ depending only on $F$ and $\rr$ with the following property. For any $t\in F_{\infty}^{\times}$ and $\ww\in W(\pi)$ (where $\pi$ is an automorphic representation with spectral parameter $\rr$), we have
\begin{equation}\label{integralofwhittakerestimateformula}
\int_{S(\varepsilon/t)} |\whit_{\ww,\rr}(ty)|^2\frac{dy}{\prod_{j\leq r}|y_j|^2 \prod_{j>r}|y_j|^3}>C_1|t|_{\infty}\left(\prod_{j\leq r}(1+q_j^2)\prod_{j> r}(1+l_j^4)\right)^{-1},
\end{equation}
if $\varepsilon$ is chosen such that $\varepsilon_j\leq C_0(1+q_j^8)^{-1}$ at real, and $\varepsilon_j\leq C_0(1+l_j^{16})^{-1}$ at complex places.
\end{lemm}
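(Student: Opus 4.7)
The plan is to reduce the integral to a product of local ones. Since $\whit_{\ww,\rr}(y)=\prod_j\whit_j(y_j)$, the domain $S(\varepsilon/t)$ is a product, and $|\whit_j(u)|^2$ depends only on $|u|$ (at complex places on $|u|$ alone, since the phase factor $e^{-iq\theta}$ has modulus one), the substitution $u_j=t_jy_j$ at real and $u_j=|t_j|y_j$ at complex places produces a Jacobian $|t_j|$ at real and $|t_j|^2=|t_j|_j$ at complex places, so that the integral equals
\begin{equation*}
|t|_\infty\cdot\prod_{j\leq r}\int_{|u|>\varepsilon_j}|\whit_{q_j,\nu_j}(u)|^2\frac{du}{u^2}\cdot\prod_{j>r}\int_{u>\varepsilon_j}|\whit_{(l_j,q_j),(\nu_j,p_j)}(u)|^2\frac{du}{u^3}.
\end{equation*}
Rewriting $du/u^2=d^\times u/|u|$ at real and $du/u^3=d^\times u/u^2$ at complex places, the problem reduces to local lower bounds $\int|\whit|^2/|u|\,d^\times u\gtrsim(1+q_j^2)^{-1}$ at real places and $\int|\whit|^2/u^2\,d^\times u\gtrsim(1+l_j^4)^{-1}$ at complex places; the constants here depend only on $F$ and on the fixed spectral parameter $\rr$.

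The strategy at each place is to exploit the $L^2$-normalization $\int|\whit|^2d^\times u=1$ from (\ref{whittakerfunctionsareorthonormal}) and to truncate the integration to a dyadic range $\varepsilon_j<|u|<M_j$ where $|\whit|^2$ carries mass at least $\tfrac12$. Then the integrand $|\whit|^2/|u|^{d_j}$ (with $d_j=1,2$) is at least $M_j^{-d_j}$ times the mass, so the local integral is $\gtrsim M_j^{-d_j}$. At real places this works cleanly: taking $M_j=C(|q_j|+|\nu_j|+1)$ with $C$ large, the exponential bound (\ref{realwhittakerestimateformula1}) makes $\int_{|u|>M_j}|\whit|^2d^\times u<\tfrac14$, while (\ref{realwhittakerestimateformula2})/(\ref{realwhittakerestimateformula3}) yield $\int_{|u|<\varepsilon_j}|\whit|^2d^\times u\ll\varepsilon_j^{1-2|\Re\nu_j|-\varepsilon}(1+|q_j|)^{2+2|\nu_j|}<\tfrac14$ provided $\varepsilon_j\leq C_0(1+q_j^8)^{-1}$ (much more room than needed). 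This gives the lower bound $\gtrsim 1/M_j\gtrsim (1+|q_j|)^{-1}\geq(1+q_j^2)^{-1}$.

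The main obstacle is the complex case, where the naive cutoff $M_j=C(l_j^4+1)(|\nu_j|^2+1)$ from Lemma \ref{complexwhittakerpreliminaryestimatelemma} would only yield $\gtrsim1/l_j^8$, short of the target $1/l_j^4$. To close this gap I would sharpen the exponential decay threshold to $r\asymp l_j+1$: in the expansion (\ref{definitionofcomplexwhittakerfunction})--(\ref{definitionofcomplexwhittakerfunctioncomplement2}), Stirling's formula for $1/\Gamma(l+1+\nu-k)$ combined with the asymptotic $K_\mu(x)\ll x^{-1/2}e^{-x}$ for $x\gtrsim|\mu|$ shows that $(2\pi r)^{l+1-k}/\Gamma(l+1+\nu-k)\ll(2\pi er/l)^{l+O(1)}$, which is dominated by $e^{2\pi r}$ once $r\geq Cl$ for a large absolute constant $C$. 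Multiplying by the exponentially small $K$-Bessel factor yields $|\whit(r)|\ll e^{-cr}$ for $r\geq C(l_j+1)$ with $c>0$ absolute; then $\int_{r>M}|\whit|^2d^\times r<\tfrac14$ for $M=C'(1+l_j^2)$ (by Markov applied to the refined decay). The near-zero contribution is controlled as at real places by (\ref{complexwhittakerpreliminaryestimateformula1}): $\int_0^{\varepsilon_j}|\whit|^2d^\times r\ll\varepsilon_j^{2-2|\Re\nu_j|-\varepsilon}(1+l_j)^{2+|p_j|}$, which is at most $\tfrac14$ under the much more generous hypothesis $\varepsilon_j\leq C_0(1+l_j^{16})^{-1}$ (recall $|p_j|$ is fixed with $\rr$). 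Dividing by $M^2\asymp(1+l_j^2)^2$ yields the desired $1/(1+l_j^4)$, and taking products across places produces the stated $|t|_\infty\prod(1+q_j^2)^{-1}\prod(1+l_j^4)^{-1}$ with $C_0,C_1$ depending only on $F$ and $\rr$.
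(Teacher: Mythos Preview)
Your strategy is the same as the paper's: pull out $|t|_\infty$ by a change of variable, invoke the $L^2$-normalization (\ref{whittakerfunctionsareorthonormal}), excise a neighborhood of $0$ using (\ref{realwhittakerestimateformula2})--(\ref{realwhittakerestimateformula3}) and (\ref{complexwhittakerestimateformula}), excise a neighborhood of $\infty$ using (\ref{realwhittakerestimateformula1}) and (\ref{complexwhittakerpreliminaryestimateformula2}), and on the remaining bounded range convert to the target weight by dividing out the upper cutoff.

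The one substantive discrepancy is the $L^2$ measure you assign at a complex place. The paper's proof asserts
\[
\int_{S(0)}|\whit_{\ww,\rr}(y)|^2\,\frac{dy}{\prod_{j\leq r}|y_j|\prod_{j>r}|y_j|^2}=A,
\]
so that passing to the target weight $|y_j|^3$ costs only a \emph{single} factor of the cutoff; with $M_j=B(1+l_j^4)$ taken straight from (\ref{complexwhittakerpreliminaryestimateformula2}) this already gives $(1+l_j^4)^{-1}$, and no sharpening of the decay threshold is needed. You instead take the radial $L^2$ measure at a complex place to be $du/u$, which would leave two powers of the cutoff to absorb and drives you to try to lower the exponential-decay threshold from $r\asymp l^4$ to $r\asymp l$. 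That sharpening is not complete as written: your estimate $(2\pi r)^{l+1-k}/\Gamma(l+1+\nu-k)\ll(2\pi er/l)^{l+O(1)}$ treats only one factor in the sum (\ref{definitionofcomplexwhittakerfunctioncomplement1}), while the coefficients $\xi_p^l(q,k)$ in (\ref{definitionofcomplexwhittakerfunctioncomplement2}) can be as large as $\binom{2l}{l}\asymp 4^l$, and the normalizing constant in the first line of (\ref{definitionofcomplexwhittakerfunction}) must also be tracked; without controlling these you cannot conclude $|\whit(r)|\ll e^{-cr}$ uniformly for $r\geq Cl$. (The invocation of ``Markov'' for the choice $M=C'(1+l_j^2)$ is also unclear --- if the decay genuinely starts at $r\asymp l$, then $M\asymp l$ already suffices.) In short: if the paper's normalization identity above is granted, your detour is unnecessary; if your reading of the radial measure is the correct one, then both your argument and the paper's printed proof have a gap at complex places, though the specific exponent of $l_j$ here only affects the Sobolev order downstream and not the structure of the later arguments.
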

\begin{proof} Observe that the integral on the left-hand side of (\ref{integralofwhittakerestimateformula}) can be written as
\begin{equation*}
|t|_{\infty} \int_{S(\varepsilon)} |\whit_{\ww,\rr}(y)|^2\frac{dy}{\prod_{j\leq r}|y_j|^2 \prod_{j>r}|y_j|^3},
\end{equation*}
so we are left to estimate this. By (\ref{whittakerfunctionsareorthonormal}), we have a positive constant $A$ depending only on $F$ such that
\begin{equation*}
\int_{S(0)} |\whit_{\ww,\rr}(y)|^2\frac{dy}{\prod_{j\leq r}|y_j| \prod_{j>r}|y_j|^2}=A.
\end{equation*}
Now observe that by (\ref{realwhittakerestimateformula2}), (\ref{realwhittakerestimateformula3}) and (\ref{complexwhittakerestimateformula}), for all $0<\varepsilon<1$,
\begin{equation*}
\begin{split}
&\left(\int_{-\varepsilon}^0+\int_0^{\varepsilon}\right) |\whit_{q,\nu}(y)|^2\frac{dy}{|y|}\ll_{F,\nu} \varepsilon^{1/2}(1+q^4),\\ & \int_0^{\varepsilon} |\whit_{(l,q),(\nu,p)}(y)|^2\frac{dy}{|y|^2}\ll_{F,\nu,p} \varepsilon^{1/2}(1+l^8)
\end{split}
\end{equation*}
at real and complex places, respectively (in the real case, use also that $|\Re\nu|\leq 7/64$).
Also by (\ref{realwhittakerestimateformula1}) and (\ref{complexwhittakerpreliminaryestimateformula2}),
\begin{equation*}
\begin{split}
&\left(\int_{-\infty}^{-B(1+q^2)}+\int_{B(1+q^2)}^{\infty}\right) |\whit_{q,\nu}(y)|^2\frac{dy}{|y|}< \frac{1}{2(r+s)},\\ & \int_{B(1+l^4)}^{\infty} |\whit_{(l,q),(\nu,p)}(y)|^2\frac{dy}{|y|^2}< \frac{1}{2(r+s)}
\end{split}
\end{equation*}
for some positive constant $B$ depending on $F$ and $\rr$. Altogether,
\begin{equation*}
\int_{\substack{y\in S(\varepsilon)\\|y_j|<A(1+q_j^2)\ (j\leq r)\\ y_j<A(1+l_j^4)\ (j>r)}} |\whit_{\ww,\rr}(y)|^2\frac{dy}{\prod_{j\leq r}|y_j| \prod_{j>r}|y_j|^2}>C_1A^{r+s}.
\end{equation*}
with some positive number $C_1$ (depending only on $F$ and $\rr$), if $\varepsilon$ is small enough (as in the statement, with an appropriate $C_0$). From this, the statement is obvious.
\end{proof}

\subsection{A bound on the supremum norm of a cusp form}

The aim of this section is to give a bound of the form $||\phi||_{\sup}\ll_{F,\pi} ||\phi||_{S_d}$, where $\phi$ is a sufficiently smooth newform in the cuspidal representation $\pi$, and the order $d$ depends only on $F$. We need some preparatory lemmas.

\begin{lemm}\label{killingcomplexrotations} Assume $(\pi,V_{\pi})$ is an irreducible cuspidal representation, and $\phi\in V_{\pi}$ is of pure weight $\ww$. Then for any $k_{\infty}\in K_{\infty}$ and $g\in\GLtwo{2}{\AAA}$
\begin{equation*}
|\phi(gk_{\infty})|\ll_F |\phi(g)|\prod_{j=r+1}^{r+s}(l_j+1)^7.
\end{equation*}
\end{lemm}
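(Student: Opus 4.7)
The bound factors over archimedean places since $K_\infty = \prod_{j=1}^{r+s} K_{\infty, j}$, so it suffices to bound the ratio $|\phi(g k_j)|/|\phi(g)|$ at each place separately. At a real place $j \leq r$, the pure-weight assumption means that $\phi$ transforms under $\mathrm{SO}_2(\RR)$ by the one-dimensional character $\Phi_{q_j}$, which has modulus $1$, so $|\phi(g k_j)| = |\phi(g)|$ and the real places contribute $1$ to the product. Hence only the complex places produce the exponents $(l_j+1)^7$.

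Fix a complex place $j > r$ and write $k = k_j$, $l = l_j$, $q = q_j$ for brevity. By the multiplicity-one description of $W(\pi)$ coming from Miyake's theory, the subspace of $V_\pi$ obtained from $\phi$ by replacing $q_j$ by a varying $q'$ (and leaving all other weights fixed) is a copy of the irreducible $(2l+1)$-dimensional $\mathrm{SU}_2(\CC)$-representation $V_l$. Let $\phi^{(q')}$ denote the weight-$q'$ companion of $\phi$ in this copy (obtained from $\phi$ by raising/lowering operators in $\frak{su}_2(\CC)\otimes_\RR \CC$), so that $\phi^{(q)}=\phi$. The definition of the matrix coefficient $\Phi^l_{p,q}$ in the excerpt then gives
\[
\phi(gk) = \sum_{|q'| \leq l,\ q'\equiv q(1)} \Phi^{l}_{q', q}(k)\, \phi^{(q')}(g).
\]
The lemma therefore reduces to two polynomial-in-$l$ bounds: (i) $\sup_{k}|\Phi^{l}_{q', q}(k)| \ll (l+1)^{O(1)}$, and (ii) the pointwise bound $|\phi^{(q')}(g)| \ll (l+1)^{O(1)}|\phi(g)|$ for every $g$.

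For (i), I would expand $(\alpha x - \overline{\beta})^{l-q}(\beta x + \overline{\alpha})^{l+q}$ in monomials $\alpha^a\overline{\alpha}^b\beta^c\overline{\beta}^d$ (with $a+b+c+d = 2l$); each such monomial has modulus at most $1$ on $|\alpha|^2+|\beta|^2 = 1$ by AM--GM, and the number of them, together with the binomial coefficients, is polynomial in $l$. For (ii), I would use the Fourier--Whittaker expansion (\ref{fourierwhittakerexpansion}). Since $\phi^{(q')}$ is obtained from $\phi$ by an operator acting only at the archimedean place $j$, the non-archimedean data $\rho_\phi(ty_\fin)$ and the sign character $\varepsilon_\pi$ are common to $\phi$ and $\phi^{(q')}$; only the archimedean Whittaker factor changes, from $\whit_{(l,q),(\nu_j,p_j)}$ to $\whit_{(l,q'),(\nu_j,p_j)}$. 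The explicit formulas (\ref{definitionofcomplexwhittakerfunction})--(\ref{definitionofcomplexwhittakerfunctioncomplement2}) and the bound of Lemma \ref{complexwhittakerestimatelemma} show that these two Whittaker functions are comparable up to a polynomial-in-$l$ factor (the $l^4$ in Lemma \ref{complexwhittakerestimatelemma} being the dominant contribution). Combining (i), (ii), and summing $O(l)$ terms produces the exponent $7$.

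\textbf{Main obstacle.} Step (ii) is genuinely delicate, because a pointwise ratio bound on the individual Whittaker functions does not automatically pass to a pointwise ratio bound on their Fourier--Whittaker sums over $t \in F^\times$: two series in $t$ weighted by similarly shaped but distinct Whittaker functions can cancel at a given $g$ in very different ways. The cleanest way around this is probably to invoke the archimedean Kirillov model of Proposition \ref{archimedeankirillovmodelproposition} and realize the passage $\phi \mapsto \phi^{(q')}$ as the action of an explicit first-order differential operator $D^{(q')}$ in $\frak{su}_2(\CC)\otimes\CC$ on the Kirillov vector $W_\phi$. The pointwise bound then follows from estimating this operator through the known action of $\frak{g}$ on Kirillov functions, with the combinatorics of repeated raising/lowering producing the polynomial factor in $l$; tracking this factor together with (i) should yield precisely $(l_j+1)^7$.
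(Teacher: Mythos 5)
Your step (ii) is where the proposal breaks down, and the tools you name cannot repair it. First, (ii) is not really a reduction: since the companions are recovered from the translates by Schur orthogonality on $\mathrm{SU}_2(\CC)$, namely $\phi^{(q')}(g)=(2l+1)\int\overline{m_{q',q}(k)}\,\phi(gk)\,dk$ with $m_{q',q}$ the unitary matrix entries, a pointwise bound $|\phi^{(q')}(g)|\ll (l+1)^{O(1)}|\phi(g)|$ is equivalent, up to polynomial factors in $l$, to the lemma itself; the difficulty has only been renamed. Second, the instruments you propose for (ii) --- comparing $\whit_{(l,q),(\nu,p)}$ with $\whit_{(l,q'),(\nu,p)}$ term by term in the Fourier--Whittaker expansion, or realizing the weight shift as a differential operator acting on the Kirillov vector $W_\phi$ --- produce only integrated information (Fourier coefficients, $L^2$ and Sobolev norms). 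No bound of that kind controls the pointwise value of one series over $t\in F^{\times}$ by another at a fixed $g$; in particular it cannot by itself exclude the configuration $\phi(g)=0\neq\phi^{(q')}(g)$, which is exactly what a pointwise ratio bound must control. You correctly identified this as the main obstacle, but the Kirillov-model patch does not address it: whatever makes such a pointwise statement hold has to come from a pointwise, representation-theoretic argument, not from archimedean special-function estimates.

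This is also where your route diverges from the paper, which never touches Whittaker functions in this lemma. After the same expansion inside the irreducible $K_\infty$-subrepresentation generated by $\phi$ (your step 2), the paper writes $\phi(gk_\infty)$ as $\phi(g)$ times a sum over the $\ll\prod_{j>r}(l_j+1)$ weights whose coefficient vector $\alpha(g;\cdot)$ is asserted to satisfy $\sum|\alpha(g;\cdot)|^2=1$ at every $g$; the only analytic input is then the sup-norm bound $|\Phi^{l}_{p,q}(k)|/\|\Phi^l_{p,q}\|_{\mathrm{SU}_2(\CC)}\ll(l+1)^6$, quoted from Bernstein--Reznikov (each derivative costing $(l+1)^{3/2}$), and counting terms gives the exponent $7$. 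In other words, the entire pointwise subtlety you isolated is absorbed into that normalization of the coefficient vector, not routed through Whittaker asymptotics. Two further corrections to your step (i): with the paper's normalization, $\Phi^l_{p,q}$ is not a unitary matrix entry, and its supremum is typically exponential in $l$ --- for instance $\Phi^l_{0,l}(k[\alpha,\beta])=\binom{2l}{l}\beta^l\overline{\alpha}^{\,l}$ has supremum $\binom{2l}{l}2^{-l}\asymp 2^{l}l^{-1/2}$ --- so the assertion that the binomial coefficients contribute only polynomially in $l$ is false as stated. The quantity actually needed is the supremum of the $L^2(\mathrm{SU}_2(\CC))$-normalized coefficient $\Phi^l_{p,q}/\|\Phi^l_{p,q}\|_{\mathrm{SU}_2(\CC)}$, which is (up to a unimodular constant) $\sqrt{2l+1}$ times a unitary entry, hence trivially $\ll(l+1)^{1/2}$; that is the bound the paper establishes (in the weaker form $(l+1)^6$). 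Relatedly, for your identity $\phi(gk)=\sum_{q'}\Phi^l_{q',q}(k)\phi^{(q')}(g)$ to hold literally, the companions $\phi^{(q')}$ must be normalized compatibly with these non-unitary $\Phi$'s, which further distorts the bound you would need in (ii).
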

\begin{proof} We may assume $||\phi||=1$. First observe that $\phi'(g)=\phi(gk_{\infty})$ is in the same irreducible representation of $K_{\infty}$ as $\phi$, therefore, we may write
\begin{equation*}
\begin{split}
\phi'(g)=\phi(gk_{\infty})=\phi(g)\sum_{|q_{r+1}|\leq l_{r+q},\ldots,|q_{r+s}|\leq l_{r+s}} & \alpha(g;q_1,\ldots,q_{r+s}) \\ & \cdot \prod_{j=1}^r \Phi_{q_j}(k_j) \prod_{j=r+1}^{r+s} \frac{\Phi_{p_j,q_j}^{l_j}(k_j)}{||\Phi_{p_j,q_j}^{l_j}||_{\mathrm{SU}_2(\CC)}},
\end{split}
\end{equation*}
where for each $g$,
\begin{equation*}
\sum_{|q_{r+1}|\leq l_{r+q},\ldots,|q_{r+s}|\leq l_{r+s}} |\alpha(g;q_1,\ldots,q_{r+s})|^2=1,
\end{equation*}
in particular, each $|\alpha(g;q_1,\ldots,q_{r+s})|\leq 1$. Since the sum has $\ll_F \prod_{j=r+1}^{r+s}(l_j+1)$ terms, it suffices to prove
\begin{equation*}
\frac{|\Phi_{p,q}^{l}(k)|}{ ||\Phi_{p,q}^{l}||_{\mathrm{SU}_2(\CC)}}\ll (l+1)^6.
\end{equation*}
This follows from \cite[Lemma on p.348 and Corollary on p.349]{BernsteinReznikov} with $n=4$ by the standard quaternion representation of $\mathrm{SU}_2(\CC)$. Each derivation gives a factor $\ll (l+1)^{3/2}$, see \cite[(2.19), (2.31)]{Lokvenec}.
\end{proof}

\begin{lemm}\label{freitag} Let $N=2^rh$, where $h$ is the class number of $F$. There are finitely many elements $a_1,\ldots,a_N\in\GLtwo{2}{F}$ regarded as elements of $\GLtwo{2}{F_{\infty}}$ and some $\delta>0$ such that for any $g\in\GLtwo{2}{F_{\infty}}$, there exist elements $z\in Z(F_{\infty})$, $\gamma\in\SLtwo{2}{\frak{o}}$ (regarded as an element of $\GLtwo{2}{F_{\infty}}$) and $k\in K_{\infty}$ such that
\begin{equation*}
g=z\gamma a_j \begin{pmatrix}y & x \cr 0 & 1\end{pmatrix}k,
\end{equation*}
for some $1\leq j\leq N$, where $\bigl(\begin{smallmatrix}y & x \cr 0 & 1\end{smallmatrix}\bigr)\in B(F_{\infty})$ satisfies $y_1,\ldots,y_{r+s}>\delta$ (in particular, all of them are real).
\end{lemm}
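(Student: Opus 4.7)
The plan is to combine the Iwasawa decomposition over $F_{\infty}$ with classical reduction theory for $\SLtwo{2}{\frak{o}}$ acting on the symmetric space $\GLtwo{2}{F_{\infty}}/Z(F_{\infty})K_{\infty}$, accounting separately for phases at complex places, cusp representatives, and determinant signs at real places.

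First I would apply the Iwasawa decomposition to write $g=z_0\bigl(\begin{smallmatrix}y_0 & x_0 \cr 0 & 1\end{smallmatrix}\bigr)k_0$ with $z_0\in Z(F_{\infty})$, $x_0\in F_{\infty}$, $y_0\in F_{\infty}^{\times}$, $k_0\in K_{\infty}$. At each complex place $j>r$, writing $y_{0,j}=|y_{0,j}|e^{i\theta_j}$ and using the decomposition
$$\begin{pmatrix}e^{i\theta_j} & 0 \cr 0 & 1\end{pmatrix}=\begin{pmatrix}e^{i\theta_j/2} & 0 \cr 0 & e^{i\theta_j/2}\end{pmatrix}\begin{pmatrix}e^{i\theta_j/2} & 0 \cr 0 & e^{-i\theta_j/2}\end{pmatrix}\in Z(\CC)\cdot\mathrm{SU}_2(\CC),$$
I absorb the central factor into $z_0$ and the $\mathrm{SU}_2$-factor into $k_0$, rescaling $x_{0,j}$ accordingly; this reduces to the case $y_{0,j}>0$ for every $j$.

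Next I would invoke classical reduction theory for $\SLtwo{2}{\frak{o}}$ on the symmetric space, which is a product of $r$ upper half-planes and $s$ upper half $3$-spaces. Its cusps, indexed by $\lfact{\SLtwo{2}{\frak{o}}}{P^1(F)}$, correspond bijectively via $[a:b]\mapsto[(a,b)]$ to the ideal class group $\mathrm{Cl}(F)$, giving $h$ cusps. For each cusp fix a scaling matrix $\sigma_i\in\GLtwo{2}{F}$ sending $\infty$ to it. Reduction theory (the totally real case goes back to the Hilbert modular literature such as Freitag's book, the complex case is treated by Elstrodt--Grunewald--Mennicke, and the general statement follows from Borel--Harish-Chandra) then provides $\delta>0$ depending only on $F$ such that the translated Siegel sets $\sigma_i\cdot\{\bigl(\begin{smallmatrix}y & x \cr 0 & 1\end{smallmatrix}\bigr):y_j>\delta\text{ for all }j\}\cdot K_{\infty}$ cover $\lfact{\SLtwo{2}{\frak{o}}}{\GLtwo{2}{F_{\infty}}}/Z(F_{\infty})K_{\infty}$. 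This produces the required $\gamma\in\SLtwo{2}{\frak{o}}$ and cusp index $i$.

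Finally, the factor $2^r$ comes from determinant signs at real places: since $Z(F_{\infty})$, $K_{\infty}$, $\SLtwo{2}{\frak{o}}$, and $\bigl(\begin{smallmatrix}y & x \cr 0 & 1\end{smallmatrix}\bigr)$ with $y_j>0$ each have positive determinant at every real place, the sign pattern $\varepsilon=(\mathrm{sign}\det(g)_j)_{j\leq r}\in\{\pm 1\}^r$ must be carried by $a_j$. By weak approximation, for each $\varepsilon$ pick $u_\varepsilon\in F^{\times}$ with the prescribed signs at real places, and enlarge the list of scaling matrices to $\{\mathrm{diag}(u_\varepsilon,1)\sigma_i:1\leq i\leq h,\ \varepsilon\in\{\pm 1\}^r\}$, of cardinality $N=2^r h$; the same $\delta$ works. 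The main obstacle is the invocation of reduction theory in the second step, whose mixed-signature version must be stated with care since the symmetric space combines upper half-plane and upper half $3$-space factors, but this is standard in the literature.
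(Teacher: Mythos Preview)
Your proposal follows the same route as the paper's proof, which is itself only a citation: reduction theory for $\SL_2(\frak{o})$ (Freitag for the totally real case, standard extensions for mixed signature), combined with a passage from $\SL_2$ to $\GL_2$ that accounts for the $2^r$ sign patterns of the determinant at the real places. Your sketch is in fact more detailed than the paper's, and the first three steps are fine.

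There is, however, a small gap in your last step. You propose $a_{\varepsilon,i}=\mathrm{diag}(u_\varepsilon,1)\sigma_i$ with the \emph{same} scaling matrices $\sigma_i$ chosen for $\SL_2(\frak{o})$, and claim the same $\delta$ works. Writing $d_\varepsilon=\mathrm{diag}(u_\varepsilon,1)$, one has $\SL_2(\frak{o})\,d_\varepsilon = d_\varepsilon\,\Gamma_\varepsilon$ with $\Gamma_\varepsilon:=d_\varepsilon^{-1}\SL_2(\frak{o})\,d_\varepsilon$, so the required covering of the $\varepsilon$-component reduces to reduction theory for $\Gamma_\varepsilon$ with cusp representatives $\sigma_i\cdot\infty$. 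Unless $u_\varepsilon\in\frak{o}^\times$ (which is not guaranteed for every sign pattern), $d_\varepsilon$ does not normalize $\SL_2(\frak{o})$, and the points $\sigma_i\cdot\infty$ need not form a set of cusp representatives for $\Gamma_\varepsilon$; in that case the Siegel sets may fail to cover. The fix is immediate: for each $\varepsilon$ choose scaling matrices $\sigma_{\varepsilon,i}\in\GL_2(F)$ adapted to the cusps of $\Gamma_\varepsilon$ (which is arithmetic with $h$ cusps as well), set $a_{\varepsilon,i}=d_\varepsilon\,\sigma_{\varepsilon,i}$, and take $\delta=\min_\varepsilon\delta_\varepsilon$. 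This preserves the count $N=2^rh$ and keeps $\gamma\in\SL_2(\frak{o})$ via $\gamma=d_\varepsilon\gamma' d_\varepsilon^{-1}$ with $\gamma'\in\Gamma_\varepsilon$.
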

\begin{proof} The statement is proved in \cite[Theorem 3.6]{Freitag} for $\SLone{2}$ over totally real fields (in that case, the implied $N$ is the class number of $F$). The case of a general number field $F$ can be handled using the same technique. The transition from $\SLone{2}$ to $\GLone{2}$ is straight-forward, see \cite[Lemma 4.9]{MPphd}.
\end{proof}

\begin{prop}\label{boundonsupremumnorm} Let $(\pi,V_{\pi})$ be an irreducible cuspidal representation. Assume that $\phi\in V_{\pi}(\frak{c}_{\pi})$ such that $||\phi||_{S_{2(7r+18s)}}$ exists. Then
\begin{equation*}
||\phi||_{\infty}=\sup_{g\in\GLtwo{2}{\AAA}}|\phi(g)|\ll_{F,\pi} ||\phi||_{S_{2(7r+18s)}}.
\end{equation*}
\end{prop}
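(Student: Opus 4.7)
The plan is to decompose $\phi$ into orthogonal pure-weight components, bound each one pointwise on a Siegel-type domain via its Fourier-Whittaker expansion, and reassemble via Cauchy-Schwarz combined with the spectral action of the $K_\infty$-Casimir elements. First, I write $\phi=\sum_{\ww\in W(\pi)}\phi_\ww$ with $\phi_\ww\in V_{\pi,\ww}(\frak{c}_\pi)$; by unitarity of the $K_\infty$-action this decomposition is orthogonal, so $||\phi||^2=\sum_\ww||\phi_\ww||^2$. Using strong approximation for $\GL_2$ (modulo a finite set of class representatives) together with unitarity of $\omega$, the task reduces to bounding $|\phi_\ww((g_\infty,1_{\fin})\cdot k_{\fin})|$ for $g_\infty\in\GLtwo{2}{F_\infty}$ and $k_{\fin}$ drawn from a finite set. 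Lemma \ref{freitag} then allows me to further assume $g_\infty=z\gamma a_j\bigl(\begin{smallmatrix}y & x \cr 0 & 1\end{smallmatrix}\bigr)k$ with totally real $y_j>\delta$, and Lemma \ref{killingcomplexrotations} absorbs $k\in K_\infty$ at the cost of the factor $\prod_{j>r}(l_j+1)^7$.

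Next, apply the Fourier-Whittaker expansion (\ref{fourierwhittakerexpansion2newforms}) to $\phi_\ww$. Using (\ref{towardsramanujanpetersson}) on the $\lambda_\pi$-coefficients, the polynomial Whittaker estimates of Lemmas \ref{realwhittakerestimatelemma} and \ref{complexwhittakerestimatelemma} on the range where $\whit_{\ww,\rr}(ty_\infty)$ is not yet exponentially small, and the exponential tail bounds (\ref{realwhittakerestimateformula1}), (\ref{complexwhittakerpreliminaryestimateformula2}), the series converges absolutely to give
$$|\phi_\ww(g)|\leq A(\ww)\cdot |\rho_{\phi_\ww}(\frak{o})|$$
with $A(\ww)$ polynomial in the weight coordinates. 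Proposition \ref{archimedeankirillovmodelproposition} combined with the $L^2$-orthonormality (\ref{whittakerfunctionsareorthonormal}) then gives $|\rho_{\phi_\ww}(\frak{o})|^2=C_\pi^{-1}||\phi_\ww||^2$, hence $|\rho_{\phi_\ww}(\frak{o})|\ll_\pi ||\phi_\ww||$.

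Finally, apply Cauchy-Schwarz over $\ww$:
$$|\phi(g)|^2\leq \Bigl(\sum_\ww \tfrac{A(\ww)^2}{B(\ww)^2}\Bigr)\Bigl(\sum_\ww B(\ww)^2||\phi_\ww||^2\Bigr),$$
with $B(\ww)$ chosen so that the first sum converges and the second is dominated by a Sobolev norm. The compact Casimirs $\Omega_{\frak{k},j}$ of (\ref{definitionofcasimirelementscompact}) act as scalar multiplication by $iq_j$ (real) and $l_j(l_j+1)/2$ (complex) on $\phi_\ww$, and are of degree $1$ and $2$ respectively in the basis derivations of (\ref{differentialoperatorbasisreal}), (\ref{differentialoperatorbasiscomplex}); orthogonality of the $\phi_\ww$ yields
$$\left\|\prod_{j\leq r}\Omega_{\frak{k},j}^{k_j}\prod_{j>r}\Omega_{\frak{k},j}^{k'_j}\phi\right\|^2=\sum_\ww \prod_{j\leq r} q_j^{2k_j}\prod_{j>r}\bigl(l_j(l_j+1)/2\bigr)^{2k'_j}||\phi_\ww||^2,$$
and the left-hand side is $\ll ||\phi||_{S_{\sum k_j+2\sum k'_j}}^2$ by triangle inequality on the basis expansion. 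Taking $k_j=7$ at each real place and $k'_j=18$ at each complex place yields $B(\ww)^2$ with exponents $14$ in each $|q_j|$ and $72$ in each $l_j$, and Sobolev order $7r+36s\leq 2(7r+18s)$, as required.

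The main obstacle is controlling the polynomial degree of $A(\ww)$: beyond the explicit Whittaker and Lemma \ref{killingcomplexrotations} factors, the Fourier sum ranges over a growing set $\{t\in F^\times:|ty_\infty|_j\lesssim\mathrm{cutoff}_j(\ww)\}$ whose contribution must be estimated polynomially in $\ww$. Applying Cauchy-Schwarz to the Hecke sum and invoking the $L^2$-type bound (\ref{boundonweightedsumofsquaresofheckeeigenvalues}) saves a half-power, and together with the polynomial Whittaker bounds this ensures that the final exponents in $A(\ww)$ are strictly below those in $B(\ww)$.
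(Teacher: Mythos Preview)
There is a genuine gap at the Fourier--Whittaker step. After you combine strong approximation with Lemma~\ref{freitag}, the archimedean point is written as $z\gamma' a_{j'}\bigl(\begin{smallmatrix}y' & x'\cr 0 & 1\end{smallmatrix}\bigr)k'$ with $\gamma' a_{j'}\in\GLtwo{2}{F}$. Moving $\gamma' a_{j'}$ across by automorphy alters the non-archimedean component, so the function you must expand is $\phi_m:h_\infty\mapsto\phi_\ww\bigl((h_\infty,m)\bigr)$ for $m$ ranging over a finite subset of $\GLtwo{2}{\AAA_{\fin}}$ depending on $F$ and $\frak{c}_\pi$. These $m$ are coset representatives for $K(\frak{c}_\pi)$ inside a compact set and are \emph{not} upper-triangular in general, so $\phi_m$ is no longer a newform at level $\frak{c}_\pi$ and the expansion (\ref{fourierwhittakerexpansion2newforms}) with Hecke eigenvalues $\lambda_\pi(\cdot)$ does not apply to it. Consequently the appeal to (\ref{towardsramanujanpetersson}) and to $|\rho_{\phi_\ww}(\frak{o})|\ll_\pi\|\phi_\ww\|$ is unjustified at this point.

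The paper handles exactly this issue: it uses the general expansion (\ref{fourierwhittakerexpansion}) for $\phi_m$ with unspecified coefficients $a(t)$, and then bounds $a(t)$ by an $L^2$ argument (Lemma~\ref{estimateoffouriercoeffecients}), which in turn rests on the non-concentration estimate for Whittaker functions in Lemma~\ref{whittakerfunctionsdonotconcentrate}. One integrates the Plancherel identity for $\phi_m$ over a region $S(\varepsilon/t)$ in $y'$, uses the covering bound of the Siegel domain over $\lfact{Z(\AAA)\GLtwo{2}{F}}{\GLtwo{2}{\AAA}}$, and obtains $a(t)\ll_{F,\pi}\|\phi\|\prod_{j\leq r}(1+|q_j|^5)\prod_{j>r}(1+l_j^{10})$ uniformly in $t$ and $m$. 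Your sketch could be repaired either by inserting this $L^2$ step, or by controlling the finite-place Whittaker function of $\pi$ at the finitely many translates $m$; but as written, the passage from the Siegel reduction to Hecke-eigenvalue coefficients is the missing idea.
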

\begin{proof} We follow the proof of \cite[Lemma 5]{BlomerHarcos}. Note that there is a correction made later in its erratum, which we also build in.
First assume $\phi\in V_{\pi}(\frak{c}_{\pi})$ is of pure weight $\ww$. Let $\eta_1,\ldots,\eta_h\in\AAA_{\fin}^{\times}$ be finite ideles representing the ideal classes. By strong approximation \cite[Theorem 3.3.1]{Bump}, there exist $\gamma\in\GLtwo{2}{F}$, $g'\in\GLtwo{2}{F_{\infty}}$, $k\in K(\frak{o})$ such that for some $1\leq j\leq h$,
\begin{equation*}
g=\gamma\left(\overbrace{g'}^{\in\GLtwo{2}{F_{\infty}}} \times \overbrace{\begin{pmatrix} \eta_j^{-1} & 0 \cr 0 & 1 \end{pmatrix}k}^{\in\GLtwo{2}{\AAA_{\fin}}}\right).
\end{equation*}
Now decompose $g'$ in the sense of Lemma \ref{freitag} as
\begin{equation*}
g'=z\gamma' a_{j'} \begin{pmatrix}y' & x' \cr 0 & 1\end{pmatrix}k',
\end{equation*}
where $a_{j'}\in\GLtwo{2}{F}$ (regarded as an element of $\GLtwo{2}{F_{\infty}}$) is from the fixed set $\{a_1,\ldots,a_{2^rh}\}$, $y'>\delta$ at all archimedean places, where $\delta>0$ is fixed (depending only on $F$), $z\in Z(F_{\infty})$, $\gamma'\in\SLtwo{2}{\frak{o}}$, $k'\in K_{\infty}$.
From now on, we regard $z$ as an element in $Z(\AAA)$, therefore we have
\begin{equation*}
g=z\gamma\gamma'a_{j'}\left( \overbrace{\begin{pmatrix}y' & x' \cr 0 & 1 \end{pmatrix}k'}^{\in\GLtwo{2}{F_{\infty}}} \times \overbrace{a_{j'}^{-1}\gamma'^{-1}\begin{pmatrix}\eta_j^{-1} & 0 \cr 0 & 1 \end{pmatrix}k}^{\in\GLtwo{2}{\AAA_{\fin}}}\right).
\end{equation*}
Here, $a_{j'}^{-1}\gamma'^{-1}\bigl(\begin{smallmatrix}\eta_j^{-1} & 0 \cr 0 & 1 \end{smallmatrix}\bigr)k$ lies in a fixed compact subset of $\GLtwo{2}{\AAA_{\fin}}$, which can be covered with finitely many left cosets of the open subgroup $K(\frak{c}_{\pi})$. Therefore
\begin{equation*}
g=z\gamma^*\left(\overbrace{\begin{pmatrix}y' & x' \cr 0 & 1\end{pmatrix}}^{\in\GLtwo{2}{F_{\infty}}}\times \overbrace{m}^{\in\GLtwo{2}{\AAA_{\fin}}}\right) \left(\overbrace{k^*_{\infty}}^{\in\GLtwo{2}{F_{\infty}}}\times \overbrace{k^*_{\fin}}^{\GLtwo{2}{\AAA_{\fin}}}\right),
\end{equation*}
where $\gamma^*\in\GLtwo{2}{F}$, $k^*=k^*_{\infty}\times k^*_{\fin}\in K_{\infty}\times K(\frak{c}_{\pi})$, and $m\in\GLtwo{2}{\AAA_{\fin}}$ runs through a finite set depending only on $F$ and $\frak{c}_{\pi}$, $y'>\delta$ at all archimedean places.

Now let $\phi_m(g)=\phi(gm)$. Obviously, $\phi$ and $\phi_m$ have the same supremum and Sobolev norms, and when $g$ decomposes as above,
\begin{equation}\label{insupremumestimatecomplexrotationskilled}
\begin{split}
|\phi(g)|&=\left|\phi_m \left( \overbrace{\begin{pmatrix} y' & x' \cr 0 & 1\end{pmatrix}k^*_{\infty}}^{\in\GLtwo{2}{F_{\infty}}} \times \overbrace{\begin{pmatrix} 1 & 0 \cr 0 & 1\end{pmatrix}}^{\in\GLtwo{2}{\AAA_{\fin}}} \right) \right|\\ & \ll_F \left|\phi_m \left( \overbrace{\begin{pmatrix} y' & x' \cr 0 & 1\end{pmatrix}}^{\in\GLtwo{2}{F_{\infty}}} \times \overbrace{\begin{pmatrix} 1 & 0 \cr 0 & 1\end{pmatrix}}^{\in\GLtwo{2}{\AAA_{\fin}}} \right) \right| \prod_{j=r+1}^{r+s} (l_j+1)^7,
\end{split}
\end{equation}
where we applied Lemma \ref{killingcomplexrotations} in the last estimate.

The function $\phi_m$ can be regarded as a classical automorphic function on $\GLtwo{2}{F_{\infty}}$ (see \cite[Section 4.3]{MPphd}). Therefore, analogously to (\ref{fourierwhittakerexpansion}), we see that $\phi_m$ (as a function on $\GLtwo{2}{F_{\infty}}$) can be expanded into Fourier series
\begin{equation}\label{insupremumestimatefourierwhittakerexpansion}
\phi_m\left(\begin{pmatrix} y' & x' \cr 0 & 1\end{pmatrix}\right)= \sum_{0\neq t\in \frak{f}}a(t)\whit_{\ww,\rr}(ty')\psi_{\infty}(tx'),
\end{equation}
where $\frak{f}$ is a fractional ideal (regarded as a lattice in $F_{\infty}$) depending only on $F$ and $\pi$.

We need some bound on the Fourier-Whittaker coefficients, which we work out in the following lemma. 
\begin{lemm}\label{estimateoffouriercoeffecients}
\begin{equation*}
a(t)\ll_{F,\pi} ||\phi||\prod_{j\leq r}(1+|q_j|^{5}) \prod_{j> r}(1+l_j^{10}).
\end{equation*}
\end{lemm}
\begin{proof} By Plancherel's formula,
\begin{equation*}
\sum_{0\neq t\in \frak{f}} |a(t)\whit_{\ww,\rr}(ty')|^2= \const(F,\pi) \int_{\rfact{F_{\infty}}{\frak{f}'}} \left|\phi_m\left(\begin{pmatrix} y' & x' \cr 0 & 1\end{pmatrix}\right)\right|^2 dx',
\end{equation*}
where $\frak{f}'$ is the dual of $\frak{f}$. Take only a single term on the left-hand side. Choose $C_0$ as in Lemma \ref{whittakerfunctionsdonotconcentrate} and then take $\varepsilon$ to be the largest which is allowed there. Integrate both sides on the domain $S(\varepsilon/t)$ with respect to the measure $dy'/(\prod_{j\leq r}|y'_j|^2 \prod_{j>r}|y'_j|^3)$ (the invariant measure on the symmetric space $\rfact{\GLtwo{2}{F_{\infty}}}{K_{\infty}}$ is $dx'dy'/(\prod_{j\leq r}|y'_j|^2 \prod_{j>r}|y'_j|^3)$). By Lemma \ref{whittakerfunctionsdonotconcentrate}, we obtain
\begin{equation*}
\begin{split}
&|a(t)|^2|t|_{\infty} \left(\prod_{j\leq r}(1+q_j^2)\prod_{j> r}(1+l_j^4)\right)^{-1}\\ & \ll_{F,\pi} \int_{\rfact{F_{\infty}}{\frak{f}'} \times S(\varepsilon/t)} \left|\phi_m\left(\begin{pmatrix} y' & x' \cr 0 & 1\end{pmatrix}\right)\right|^2 \frac{dx'dy'}{\prod_{j\leq r}|y'_j|^2 \prod_{j>r}|y'_j|^3}.
\end{split}
\end{equation*}
The domain $\rfact{F_{\infty}}{\frak{f}'} \times S(\varepsilon/t)$ covers each point of $\lrfact{Z(\AAA)\GLtwo{2}{F}}{\GLtwo{2}{\AAA}}{(mK(\frak{c}_{\pi})m^{-1})}$ at most $O_{F,\pi}(|t/\varepsilon|_{\infty})$ times (see \cite[Lemma 2.10]{IwaniecSpectral}), which, together with the choice of $\varepsilon$, gives
\begin{equation*}
|a(t)|^2\ll_{F,\pi}||\phi||^2\prod_{j\leq r}(1+q_j^{10})\prod_{j>r}(1+l_j^{20}),
\end{equation*}
and the claim follows.
\end{proof}
Now (\ref{insupremumestimatecomplexrotationskilled}) and (\ref{insupremumestimatefourierwhittakerexpansion}) give
\begin{equation}\label{insupremumestimatepreliminaryestimate0}
|\phi(g)|\ll_{F,\pi} ||\phi|| \prod_{j\leq r}(1+|q_j|^5) \prod_{j> r}(1+l_j^{17}) \sum_{0\neq t\in \frak{f}} |\whit_{\ww,\rr}(ty')|.
\end{equation}
We turn our attention to $\sum_{0\neq t\in \frak{f}} |\whit_{\ww,\rr}(ty')|$.

From (\ref{realwhittakerestimateformula1}), (\ref{realwhittakerestimateformula2}), (\ref{realwhittakerestimateformula3}), (\ref{complexwhittakerpreliminaryestimateformula2}) and (\ref{complexwhittakerestimateformula}), we see that
\begin{equation}\label{insupremumestimateuniformboundsonwhittakerfunctions}
\begin{split}
&\whit_{q,\nu}(y)\ll_{F,\pi} (|q|^3+1)\exp\left(-\frac{|y|}{2(q^2+1)(|\nu|^2+1)}\right),\\ &\whit_{(l,q),(\nu,p)}(y)\ll_{F,\pi} (l^8+1)\exp\left(-\frac{|y|}{2(l^4+1)(|\nu|^2+1)}\right)
\end{split}
\end{equation}
holds for all $y\neq 0$, at real and complex places, respectively.

Setting $A_j=|q_j|^3+1$, $B_j=2(q_j^2+1)(|\nu_j|^2+1)$ at real places, and $A_j=l_j^8+1$, $B_j=2(l_j^4+1)(|\nu_j|^2+1)$ at complex places, (\ref{insupremumestimateuniformboundsonwhittakerfunctions}) and a simple calculation yields
\begin{equation*}
\sum_{0\neq t\in\frak{f}}|\whit_{\ww,\rr}(ty')| \ll_{F,\frak{f}} \prod_{j=1}^{r+s}A_jB_j^{\deg[F_j:\RR]},
\end{equation*}
where we used that $|y'_j|>\delta$ at all places, and also the fact that a lattice $L$ in $F_{\infty}$ contains $O_L(N^{r+2s})$ points of supremum norm $\leq N$.

Therefore,
\begin{equation*}
\sum_{0\neq t\in\frak{f}}|\whit_{\ww,\rr}(ty')|\ll_{F,\pi} \prod_{j=1}^r (|q_j|^5+1) \prod_{j=r+1}^{r+s} (l_j^{16}+1),
\end{equation*}
which, together with (\ref{insupremumestimatepreliminaryestimate0}), give rise to
\begin{equation}\label{insupremumestimatepreliminaryestimate}
|\phi(g)|\ll_{F,\pi} ||\phi|| \prod_{j\leq r} (1+q_j^{10}) \prod_{j>r} (1+l_j^{33}).
\end{equation}

Assume now a sufficiently smooth $\phi\in V_{\pi}$ is not necessarily of pure weight. We may decompose it as
\begin{equation}\label{insupremumestimateweightdecomposition}
\phi=\sum_{\ww\in W(\pi)}b_{\ww}\phi_{\ww},
\end{equation}
where $\phi_{\ww}$ is a weight $\ww$ function of norm $1$ in $V_{\pi}$.
Let us follow the common practice and using the smoothness of $\phi$, estimate $b_{\ww}$ in terms of $\sup\ww=\max(|q_1|,\ldots,|q_r|,l_{r+1},\ldots,l_{r+s})$. Using Parseval, then (\ref{definitionofcompactcasimirelementactionsreal}) and (\ref{definitionofcompactcasimirelementactionscomplex}), we find, for any nonnegative integer $k$,
\begin{equation}\label{insupremumestimateweightcoefficientsestimate}
b_{\ww}=\langle \phi, \phi_{\ww} \rangle\ll_k \frac{1}{(1+(\sup \ww))^{2k}}\langle \Omega_{\frak{k},j}^k\phi, \phi_{\ww} \rangle\ll_k \frac{1}{(1+(\sup \ww))^{2k}} ||\phi||_{S_{2k}},
\end{equation}
where $j$ is the index of an archimedean place, where the maximum (in the definition of $\sup\ww$) is attained. Together with (\ref{insupremumestimatepreliminaryestimate}) and (\ref{insupremumestimateweightdecomposition}), this implies
\begin{equation*}
|\phi(g)|\ll_{F,\pi,k} \sum_{\ww\in W(\pi)} (1+\sup \ww)^{10r+33s-2k} ||\phi||_{S_{2k}}.
\end{equation*}
Here, choosing $k=7r+18s$, we obtain the statement by noting that $\sup\ww$ attains the positive integer $N$ on a set of cardinality $O_F(N^{r+2s-1})$.
\end{proof}

\subsection{A bound on Kirillov vectors}

\begin{prop}\label{boundonkirillovvectors} Let $(\pi,V_{\pi})$ be an irreducible automorphic representation occuring in $L^2(\lfact{\GLtwo{2}{F}}{\GLtwo{2}{\AAA}},\omega)$. Let $\frak{t}\subseteq\frak{o}$ be an ideal, $a,b,c$ be nonnegative integers, $0<\varepsilon<1/4$. Let $P\in\CC[x_1,\ldots,x_{r+2s}]$ be a polynomial of degree at most $a$ in each variable. Set then
\begin{equation*}
\mathcal{D}=P\left(\left(y_j\frac{\partial}{\partial y_j}\right)_{j\leq r}, \left(y_j\frac{\partial}{\partial y_j}\right)_{j>r}, \left(\overline{y_j}\frac{\partial}{\partial \overline{y_j}}\right)_{j>r}\right).
\end{equation*}
Assume $\phi\in R^{\frak{t}}V_{\pi}(\frak{c}_{\pi})$ such that $||\phi||_{S_{2(3r+4s+2)+(r+s)(a+b+2c)}}$ exists. Then $\mathcal{D}W_{\phi}$ exists and
\begin{equation*}
\begin{split}
&\mathcal{D}W_{\phi}(y)\ll_{a,b,c,P,F,\varepsilon} ||\phi||_{S_{2(3r+4s+2)+(r+s)(a+b+2c)}}\norm{\frak{t}}^{\varepsilon} \norm{\frak{c}_{\pi}}^{\varepsilon} \norm{\rr}^{-c} \\ & \cdot\prod_{j=1}^r (|y_j|^{1/2-\varepsilon}+|y_j|^{1/2-\theta-\varepsilon})(\min(1,|y_j|^{-b})) \prod_{j=r+1}^{r+s}(|y_j|^{3/4}+|y_j|)(\min(1,|y_j|^{-b})).
\end{split}
\end{equation*}
\end{prop}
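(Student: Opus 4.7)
The plan is to reduce the estimate to pointwise bounds on archimedean Whittaker functions of pure-weight newforms, and then to convert smoothness of $\phi$ into decay both in the weight $\ww$ and in the archimedean variable $y$.

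Since $W_{\phi}=W_{(R^{\frak{t}})^{-1}\phi}$ by (\ref{orthogonalizedfouriercofficients}) and $R^{\frak{t}}$ is an $L^{2}$-isometric combination of right translations by finite matrices, which commute with every archimedean derivation appearing in the Sobolev norm, one has $||(R^{\frak{t}})^{-1}\phi||_{S_{d}}=||\phi||_{S_{d}}$, so I may replace $\phi$ by a newform $\phi_{0}\in V_{\pi}(\frak{c}_{\pi})$ (the $\norm{\frak{t}}^{\varepsilon}$ factor in the conclusion appears only as slack in later auxiliary sums over divisors of $\frak{t}$). I then decompose $\phi_{0}=\sum_{\ww\in W(\pi)}c_{\ww}\phi_{\ww}$ into unit-norm newforms of pure weight. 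As in (\ref{insupremumestimateweightcoefficientsestimate}), iterated use of the compact Casimirs $\Omega_{\frak{k},j}$ together with Parseval gives, for any $N\ge 0$,
\begin{equation*}
|c_{\ww}|\ll_{N}(1+\sup\ww)^{-2N}||\phi_{0}||_{S_{2N}},
\end{equation*}
which makes the weight sum convergent once $N$ is large enough in terms of $F$. By (\ref{kirillovvectorsforpureweightforms}), $W_{\phi_{\ww}}(y)=\varrho_{\phi_{\ww}}(\frak{o})\varepsilon_{\pi}(\sign y)\whit_{\ww,\rr}(y)$, and combining the Kirillov isometry (\ref{archimedeankirillovmodelformula}) with (\ref{whittakerfunctionsareorthonormal}) forces $|\varrho_{\phi_{\ww}}(\frak{o})|^{2}=C_{\pi}^{-1}\ll\norm{\frak{c}_{\pi}}^{\varepsilon}\norm{\rr}^{\varepsilon}$ via (\ref{proportionalityconstantisessentiallylfunctionresidue}) and Proposition \ref{residueofrankinselbergsquarelfuncionisaboutone}; the spurious $\norm{\rr}^{\varepsilon}$ is absorbed into the target $\norm{\rr}^{-c}$.

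It remains to estimate $\mathcal{D}\whit_{\ww,\rr}(y)$ pointwise. The operator $\mathcal{D}$, being a polynomial of degree $\le a$ in each Mellin derivation, is expressible through the Cartan elements $\mathbf{H}_{j}$, $\mathbf{H}_{1,j}$, $\mathbf{H}_{2,j}$ of (\ref{differentialoperatorbasisreal})-(\ref{differentialoperatorbasiscomplex}) as a right-invariant differential operator of total degree $\le(r+s)a$, so $\mathcal{D}W_{\phi_{0}}=W_{\mathcal{D}'\phi_{0}}$ for some $\mathcal{D}'\in U(\frak{g})$. The decay $\min(1,|y_{j}|^{-b})$ at place $j$ is produced by integration by parts in $x_{j}$ in the defining integral (\ref{alternativedefinitionforkirillovvector1}), which converts multiplication of $W_{\phi_{0}}(y)$ by $y_{j}^{b}$ into the application of a nilpotent first-order Lie-algebra element ($\mathbf{R}_{j}$ at real, a suitable combination of $\mathbf{V}_{k,j},\mathbf{W}_{k,j}$ at complex) $b$ times to $\phi_{0}$, costing at most $(r+s)b$ further derivations. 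The saving $\norm{\rr}^{-c}$ is obtained by dividing by the scalar action of the local Casimirs $\Omega_{j}$ (real) and $\Omega_{+,j}+\Omega_{-,j}$ (complex) on $\pi$, of sizes $(1+|\nu_{j}|)^{2}$ and $(1+|\nu_{j}|+|p_{j}|)^{2}$ respectively; this costs at most $2c(r+s)$ further derivations, accounting for the $(r+s)(a+b+2c)$ term in the Sobolev order.

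Finally, the pointwise bounds (\ref{realwhittakerestimateformula2}), (\ref{realwhittakerestimateformula3}) at real places and Lemma \ref{complexwhittakerestimatelemma} at complex places furnish the factors $|y_{j}|^{1/2-\varepsilon}$ (or $|y_{j}|^{1/2-\theta-\varepsilon}$ in the complementary series) and $|y_{j}|^{3/4}+|y_{j}|$ respectively, with polynomial-in-$\ww$ losses dominated by the Parseval decay of $c_{\ww}$; summing over $\ww\in W(\pi)$ and choosing the base Sobolev order $2(3r+4s+2)$ large enough to swallow those losses gives the bound. The main technical obstacle is the joint accounting: each integration by parts (producing $|y_{j}|^{-1}$) and each Casimir division (producing $\norm{\rr}^{-2}$) must not reintroduce polynomial loss in the other spectral parameters or the weight, so the bookkeeping across real and complex places, with their different normalizations of $\norm{\rr}$, must be done carefully.
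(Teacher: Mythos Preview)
Your proposal follows essentially the same strategy as the paper's proof: reduce to pure-weight vectors, bound $|W_{\phi_{\ww}}|$ via the Kirillov isometry and the pointwise Whittaker estimates (\ref{realwhittakerestimateformula2}), (\ref{realwhittakerestimateformula3}), (\ref{complexwhittakerestimateformula}), sum over $\ww$ using the compact-Casimir decay of the coefficients, and then realise $\mathcal{D}$, the $|y_j|^{-b}$ saving, and the $\norm{\rr}^{-c}$ saving through the Lie-algebra action (the paper applies the preliminary bound to $\mathcal{D}'\mathcal{D}\phi$, where $\mathcal{D}'$ is built from $\Omega_{j}^{c+2}$, $\Omega_{+,j}^{c+2}$ and powers of $\mathbf{R}_j$, $\mathbf{V}_{1,j}+\mathbf{W}_{1,j}$, $\mathbf{V}_{2,j}+\mathbf{W}_{2,j}$, exactly as you describe).

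One correction: the factor $\norm{\frak{t}}^{\varepsilon}$ is not mere slack. In the Eisenstein case the identification $W_{\phi}=W_{(R^{\frak{t}})^{-1}\phi}$ of (\ref{orthogonalizedfouriercofficients}) is not available; there $R^{\frak{t}}$ is defined via the explicit basis $\varphi^{\frak{t},\ww}$, and the Kirillov norm is controlled by (\ref{estimateofkirillovvectorofeisenstein}), which is precisely where $\norm{\frak{t}}^{\varepsilon}$ enters. For the cuspidal case your reduction to the newform is fine and $\norm{\frak{t}}^{\varepsilon}$ is indeed redundant there.
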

\begin{proof} We follow the proof of \cite[Lemma 4]{BlomerHarcos}. First assume $\phi\in R^{\frak{t}}V_{\pi}(\frak{c}_{\pi})$ is of pure weight $\ww$. Then we may write
\begin{equation*}
|W_{\phi}(y)|=||W_{\phi}||\cdot|\whit_{\ww,\rr}(y)|.
\end{equation*}
Using Proposition \ref{archimedeankirillovmodelproposition}, (\ref{archimedeankirillovmodelformula}), (\ref{estimateofkirillovvectorofeisenstein}), (\ref{archimedeankirillovmodelformula2}), the remark after that, (\ref{proportionalityconstantisessentiallylfunctionresidue}), Proposition \ref{residueofrankinselbergsquarelfuncionisaboutone}, and the estimates (\ref{realwhittakerestimateformula2}), (\ref{realwhittakerestimateformula3}), (\ref{complexwhittakerestimateformula}), we have, for $0<\varepsilon<1/4$,
\begin{equation*}
\begin{split}
W_{\phi}(y)\ll_{F,\varepsilon} ||\phi|| \norm{\frak{t}}^{\varepsilon} \norm{\frak{c}_{\pi}}^{\varepsilon} \norm{\rr}^{\varepsilon}& \prod_{j=1}^r (1+|\nu_j|+|q_j|)^{1+\theta}(|y_j|^{1/2-\varepsilon} +|y_j|^{1/2-\theta-\varepsilon})\\ &\cdot\prod_{j=r+1}^{r+s} (1+|p_j|)(1+|\nu_j|^2)(1+l_j^4) (|y_j|^{3/4}+|y_j|).
\end{split}
\end{equation*}
This gives
\begin{equation*}
\begin{split}
W_{\phi}(y)\ll_{F,\varepsilon} ||\phi|| \norm{\frak{t}}^{\varepsilon} \norm{\frak{c}_{\pi}}^{\varepsilon} \norm{\rr}^2& \prod_{j=1}^r (1+|q_j|)^{1+\theta}(|y_j|^{1/2-\varepsilon} +|y_j|^{1/2-\theta-\varepsilon})\\ &\cdot\prod_{j=r+1}^{r+s} (1+l_j^4) (|y_j|^{3/4}+|y_j|).
\end{split}
\end{equation*}

Now take an arbitrary $\phi\in R^{\frak{t}}V_{\pi}(\frak{c}_{\pi})$, which is sufficiently smooth. Then recalling (\ref{insupremumestimateweightdecomposition}) and (\ref{insupremumestimateweightcoefficientsestimate}), in
\begin{equation*}
\phi=\sum_{\ww\in W(\pi)}b_{\ww}\phi_{\ww},\qquad ||\phi_{\ww}||=1,
\end{equation*}
we have
\begin{equation*}
b_{\ww}\ll_k\frac{1}{(1+(\sup \ww))^{2k}} ||\phi||_{S_{2k}}.
\end{equation*}

Now choosing $k=3r+4s$, we obtain
\begin{equation}\label{inkirillovvectorestimatepreliminary}
\begin{split}
W_{\phi}(y)\ll_{F,\varepsilon} ||\phi||_{S_{2(3r+4s)}} \norm{\frak{t}}^{\varepsilon} \norm{\frak{c}_{\pi}}^{\varepsilon} \norm{\rr}^2&\prod_{j=1}^r (|y_j|^{1/2-\varepsilon} +|y_j|^{1/2-\theta-\varepsilon})\\ & \cdot \prod_{j=r+1}^{r+s}(|y_j|^{3/4}+|y_j|).
\end{split}
\end{equation}
The differential operators given in (\ref{differentialoperatorbasisreal}) and (\ref{differentialoperatorbasiscomplex}) act on the sufficiently smooth Kirillov vectors. We record the action of some of them (neglecting some absolute scalars for simplicity). Of course, $\Omega_{j(,\pm)}$ act by $\lambda_{(\pm)}$. From (\ref{alternativedefinitionforkirillovvector1}) and (\ref{alternativedefinitionforkirillovvector2}), it is easy to derive that $\mathbf{R}_j$, $\mathbf{V}_{1,j}+\mathbf{W}_{1,j}$, $\mathbf{V}_{2,j}+\mathbf{W}_{2,j}$ act via a multiplication by $y_j$, $\Re y_j$, $\Im y_j$, respectively; finally $\mathbf{H}_j$ by $y_j(\partial/\partial y_j)$, and $\mathbf{H}_{1,j}$, $\mathbf{H}_{2,j}$ by $y_j(\partial/\partial y_j)+\overline{y_j}(\partial/\partial \overline{y_j})$, $iy_j(\partial/\partial y_j)-i\overline{y_j}(\partial/\partial \overline{y_j})$, respectively.

Now assume given $a,b,c$ and the polynomial $P$ as in the statement. Then 
\begin{equation*}
\mathcal{D}= \const_{F,\mathcal{D}} P\Bigl((\mathbf{H}_j)_{j\leq r}, ((\mathbf{H}_{1,j}-i\mathbf{H}_{2,j})/2)_{j>r}, ((\mathbf{H}_{1,j}+i\mathbf{H}_{2,j})/2)_{j>r}\Bigr),
\end{equation*}
and define the differential operator
\begin{equation*}
\begin{split}
\mathcal{D}'=&\left(\prod_{j\leq r} \Omega_j^{c+2} \prod_{j>r} \Omega_{j,+}^{c+2}\right) \left(\prod_{\substack{1\leq j\leq r\\ |y_j|\geq 1}} \mathbf{R}_j^b\right) \\ & \left(\prod_{\substack{r+1\leq j\leq r+s\\ |y_j|\geq 1\\ |\Re y_j|\geq |\Im y_j|}} (\mathbf{V}_{1,j}+\mathbf{W}_{1,j})^b\right) \left(\prod_{\substack{r+1\leq j\leq r+s\\ |y_j|\geq 1\\ |\Re y_j|< |\Im y_j|}} (\mathbf{V}_{2,j}+\mathbf{W}_{2,j})^b\right).
\end{split}
\end{equation*}
Applying (\ref{inkirillovvectorestimatepreliminary}) to $\mathcal{D}'\mathcal{D}\phi$, we obtain the statement.
\end{proof}

\section{The spectral decomposition of shifted convolution sums}\label{chap:shiftedconvolutionsums}

The aim of this section is to prove a variant of \cite[Theorem 2]{BlomerHarcos} for arbitrary number fields.

We focus on the subspace $L^2(\lrfact{\GLtwo{2}{F}}{\GLtwo{2}{\AAA}}{K(\frak{c})},\omega)$, the subspace consisting of functions that are right $K(\frak{c})$-invariant. Its spectral decomposition is similar to (\ref{spectraldecomposition}), the only modification is the restriction of $\mathcal{C}_{\omega},\mathcal{E}_{\omega}$ to $\mathcal{C}_{\omega}(\frak{c}),\mathcal{E}_{\omega}(\frak{c})$, respectively (recall (\ref{C(c)E(c)})). We write
\begin{equation*}
\int_{(\frak{c})} f_{\varpi}d\varpi= \sum_{\pi\in\mathcal{C}_{\omega}(\frak{c})}f_{\pi} + \int_{\mathcal{E}_{\omega}(\frak{c})}f_{\varpi}d\varpi,
\end{equation*}
if $f$ is a function of those infinite-dimensional representations, which are not orthogonal to $L^2(\lrfact{\GLtwo{2}{F}}{\GLtwo{2}{\AAA}}{K(\frak{c})},\omega)$, 

\begin{theo}\label{spectraldecompositionofshiftedconvolutionsum} We have a spectral decomposition of shifted convolution sums in the sense of Part \ref{spectraldecompositionofshiftedconvolutionsumpart1} with functions satisfying the bound in Part \ref{spectraldecompositionofshiftedconvolutionsumpart2}.
\end{theo}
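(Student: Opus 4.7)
The plan is to adapt the strategy of \cite[Theorem 2]{BlomerHarcos} (based on \cite[Theorem 2]{BlomerHarcosDuke}) to the full number-field setting, keeping close track of the complex places. Fix a newform $\phi_{\pi}\in V_{\pi}(\mathfrak{c}_{\pi})$ and the translates appearing in the shifted sum, encoded as a product of the form $\Phi(g)=\overline{\phi_{\pi}(g a_{1})}\,\phi_{\pi}(g a_{2})\,F_{\infty}(g)$, where $a_{1},a_{2}\in\GLtwo{2}{\AAA_{\fin}}$ are diagonal matrices encoding the two ``sides'' of the shift $q$ and $F_{\infty}$ is a smooth compactly supported test function on $\lfact{Z(F_{\infty})}{\GLtwo{2}{F_{\infty}}}$. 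By construction, evaluating the $q$-th archimedean Fourier coefficient of $\Phi$ along $N(F_\infty)\lfact{}{}$ (in the manner of (\ref{alternativedefinitionforkirillovvector1})) reproduces the shifted convolution sum weighted against a transform of $F_{\infty}$ involving the Kirillov vector $W_{\phi_{\pi}}$, by the Fourier--Whittaker expansion (\ref{fourierwhittakerexpansion2newforms}).

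Next, I would apply the spectral decomposition (\ref{spectraldecomposition}) of $L^{2}(\lfact{\GLtwo{2}{F}}{\GLtwo{2}{\AAA}},1)$ to $\Phi$ (which has trivial central character since $|\phi_\pi|^2$ is invariant under $Z(\AAA)$), restricted to the right $K(\mathfrak{c})$-invariant subspace as in the notation $\int_{(\mathfrak{c})}d\varpi$. Taking the $q$-th Fourier coefficient commutes with the spectral integral, so each spectral component $\Phi_{\varpi}$ contributes its own $q$-th Fourier coefficient, expressible through (\ref{fourierwhittakerexpansion2oldforms}) or (\ref{fourierwhittakerexpansionofeisensteinseries}) as a Hecke eigenvalue $\lambda^{\mathfrak{t}}_{\varpi}(q\mathfrak{y}_{\fin})$ multiplied by a Kirillov vector $W_{\Phi_{\varpi}}(qy_{\infty})$. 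Computing $\langle \Phi,\Phi_{\varpi}\rangle$ by unfolding against the Fourier--Whittaker expansion of $\Phi_{\varpi}$ (a Rankin--Selberg style manipulation mirroring Section~\ref{sec:lfunctions_rankinselbergconvolution}) expresses the coefficient of $\Phi_{\varpi}$ in terms of an integral transform $\mathcal{K}_{\varpi}(F_{\infty})$ of $F_{\infty}$ involving three Kirillov/Whittaker vectors. This produces the decomposition asserted in Part~A.

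For Part~B, I would bound the weight function $\mathcal{K}_{\varpi}(F_{\infty})$ uniformly in $\varpi$. The key tool is Proposition~\ref{boundonkirillovvectors}, applied to $W_{\Phi_{\varpi}}$: choosing the polynomial $P$ and integers $a,b,c$ so that integration by parts moves derivatives from $W_{\Phi_{\varpi}}$ onto $F_{\infty}\cdot W_{\phi_\pi}^{\pm 1}$ converts the trivial bound into a bound that gains arbitrary powers of $\norm{\rr_{\varpi}}$ and decays in $y_j$ at $0$ and $\infty$ at each archimedean place. These pointwise bounds must be combined with the upper bounds on $W_{\phi_\pi}$ from Lemmas~\ref{realwhittakerestimatelemma} and \ref{complexwhittakerestimatelemma}, with (\ref{boundonheckeeigenvaluesineisensteinspectrum}) and (\ref{estimateofkirillovvectorofeisenstein}) in the Eisenstein case, and with the Sobolev control $\|\Phi\|_{S_{d}}\ll_{F,\pi}\|F_{\infty}\|_{S_{d}}\|\phi_\pi\|_{S_{d}}^{2}$ that follows from Proposition~\ref{boundonsupremumnorm}. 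A further insertion of factors $\norm{\rr_{\varpi}}^{-N}$ for arbitrary $N$ (obtained via the Casimir elements acting diagonally on $\Phi_\varpi$, exactly as in (\ref{spectraldecompositionofderivatives})) ensures that the resulting bound is summable/integrable against the density estimates of Corollary~\ref{densityofeisensteinspectrum2} and Lemma~\ref{densityofcuspidalspectrumgeneral}/Corollary~\ref{densityofcuspidalspectrum2}.

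The main obstacle will be the complex places. First, the Kirillov model decomposes through the $K_\infty$-types $(l_j,q_j)$ with $l_j$ ranging over an infinite set, so one must control the weight-sum in (\ref{insupremumestimateweightdecomposition}) by pushing derivatives of the Casimir $\Omega_{\mathfrak{k},j}$ (cf.\ (\ref{definitionofcompactcasimirelementactionscomplex})), paying attention to the factors $(1+l_j^{4})$ that appear in Lemmas~\ref{complexwhittakerestimatelemma} and \ref{whittakerfunctionsdonotconcentrate}. Second, since $|y|_j=|y|^{2}$ at complex places, the natural gains in the test function transform are of shape $|y_j|^{3/4}$ rather than $|y_j|^{1/2}$, which slightly worsens the exponents and must be balanced delicately against the density estimate, which provides $X^{r+2s}$ rather than $X^{r+s}$. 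Third, the Rankin--Selberg unfolding at complex places produces the local integrals computed in Section~\ref{sec:lfunctions_rankinselbergconvolution}; this must be carried through carefully (as in the proof of Proposition~\ref{archimedeankirillovmodelproposition}) to guarantee that the resulting constants $C_{\pi},C_{\chi,\chi^{-1}}$ do not destroy the uniform estimates. Once these three points are handled, the remainder of the argument is a routine generalization of \cite[Sections~3--4]{BlomerHarcos}.
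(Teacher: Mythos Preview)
Your proposal has a genuine structural gap in the construction of $\Phi$. You set
\[
\Phi(g)=\overline{\phi_{\pi}(g a_{1})}\,\phi_{\pi}(g a_{2})\,F_{\infty}(g)
\]
with $F_{\infty}$ a compactly supported test function on $\lfact{Z(F_{\infty})}{\GLtwo{2}{F_{\infty}}}$. But such an $F_{\infty}$ is not left $\GLtwo{2}{F}$-invariant, so $\Phi$ is not an element of $L^{2}(\lfact{\GLtwo{2}{F}}{\GLtwo{2}{\AAA}},1)$, and the spectral decomposition (\ref{spectraldecomposition}) does not apply to it. The subsequent step of ``taking the $q$-th Fourier coefficient commutes with the spectral integral'' therefore breaks down before it starts. (One could try to absorb $F_{\infty}$ into a Poincar\'e series instead, but that is a different machine and would not produce the statement of Part~A with \emph{arbitrary} Schwartz weights $W_{1},W_{2}$ as required.)

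The paper avoids this by a different and cleaner construction. Given the Schwartz functions $W_{1},W_{2}$ in the statement, it invokes the \emph{surjectivity} of the archimedean Kirillov map (Proposition~\ref{archimedeankirillovmodelproposition}) to find smooth $\phi_{1}\in V_{\pi_{1}}(\frak{c}_{\pi_{1}})$, $\phi_{2}\in V_{\pi_{2}}(\frak{c}_{\pi_{2}})$ with $W_{\phi_{i}}=W_{i}$, and then sets
\[
\Phi=R_{(l_{1})}\phi_{1}\cdot R_{(l_{2})}\overline{\phi_{2}}.
\]
This $\Phi$ is a genuine element of $L^{2}(\lrfact{\GLtwo{2}{F}}{\GLtwo{2}{\AAA}}{K(\frak{c})},1)$, and no auxiliary $F_{\infty}$ is needed: the weights are already encoded in the choice of $\phi_{1},\phi_{2}$. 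Computing the $q$-th Fourier coefficient of $\Phi$ in two ways---once via (\ref{fourierwhittakerexpansion2newforms}) applied to each factor, once via the spectral expansion (\ref{spectraldecomposition}) followed by (\ref{fourierwhittakerexpansion2oldforms}) and (\ref{fourierwhittakerexpansionofeisensteinseries})---gives Part~A directly. Note also that the statement allows two different representations $\pi_{1},\pi_{2}$, not just a single $\pi$ as in your sketch.

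For Part~B, there is no Rankin--Selberg unfolding or integration by parts between Kirillov vectors. The argument is: Proposition~\ref{boundonkirillovvectors} bounds $(\norm{\rr_{\varpi}})^{2c}|\mathcal{D}W_{\varpi,\frak{t}}(y)|^{2}$ by the pointwise factor in $y$ times $\|\Phi_{\varpi,\frak{t}}\|_{S_{\beta}}^{2}$; summing over $\varpi,\frak{t}$ and using the Plancherel-type identity (\ref{spectraldecompositionofderivatives}) replaces $\sum\|\Phi_{\varpi,\frak{t}}\|_{S_{\beta}}^{2}$ by $\|\Phi\|_{S_{\beta}}^{2}$; finally Proposition~\ref{boundonsupremumnorm} (the sup-norm bound) gives $\|\Phi\|_{S_{\beta}}\ll\|\phi_{1}\|_{S_{\alpha}}\|\phi_{2}\|_{S_{\alpha}}$, and (\ref{archimedeankirillovmodelformula}) with Proposition~\ref{residueofrankinselbergsquarelfuncionisaboutone} converts these to $\|W_{1}\|_{S_{\alpha}}\|W_{2}\|_{S_{\alpha}}$. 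The density bounds of Corollaries~\ref{densityofeisensteinspectrum2}, \ref{densityofcuspidalspectrum2} are only used afterwards (Corollaries~\ref{spectraldecompositionofshiftedconvolutionsumcorollaryeisenstein}, \ref{spectraldecompositionofshiftedconvolutionsumcorollarycuspidal}) to pass from this $L^{2}$-bound to $L^{1}$-bounds.
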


\begin{partoftheo}\label{spectraldecompositionofshiftedconvolutionsumpart1}
Assume $\pi_1,\pi_2$ are irreducible cuspidal representations of the same central character. Let $l_1,l_2\in\frak{o}\setminus\{0\}$, and set $\frak{c}=\mathrm{lcm}(l_1\frak{c}_{\pi_1},l_2\frak{c}_{\pi_2})$. Let moreover $W_1,W_2:F_{\infty}^{\times}\rightarrow\CC$ be arbitrary Schwarz functions, that is, they are smooth and tend to $0$ faster then any power of $y^{-1}$ or $y$, as $y$ tends to $\infty$ or $0$, respectively. Then for any $\varpi\in\mathcal{C}_{1}(\frak{c})\cup\mathcal{E}_{1}(\frak{c})$ and $\frak{t}|\frak{cc}_{\pi}^{-1}$, there exists a function $W_{\varpi,\frak{t}}:F_{\infty}^{\times}\rightarrow\CC^{\times}$ depending only on $F,\pi_1,\pi_2,W_1,W_2,\varpi,\frak{t}$ such that the following holds. For any $Y\in (0,\infty)^{r+s}$, any ideal $\frak{n}\subseteq\frak{o}$ and any $0\neq q\in\frak{n}$, there is a spectral decomposition of the shifted convolution sum
\begin{equation*}
\begin{split}
\sum_{l_1t_1-l_2t_2=q,0\neq t_1,t_2\in\frak{n}} \frac{\lambda_{\pi_1}(t_1\frak{n}^{-1})\overline{ \lambda_{\pi_2}(t_2\frak{n}^{-1})}}{ \sqrt{\norm{t_1t_2\frak{n}^{-2}}}} &W_1\left(\left(\frac{(l_1t_1)_j}{Y_j}\right)_j\right) \overline{W_2\left(\left(\frac{(l_2t_2)_j}{Y_j}\right)_j\right)}\\ & =\int_{(\frak{c})}\sum_{\frak{t}|\frak{cc}_{\varpi}^{-1}} \frac{\lambda_{\varpi}^{\frak{t}}(q\frak{n}^{-1})}{\sqrt{\norm{q\frak{n}^{-1}}}} W_{\varpi,\frak{t}}\left(\left(\frac{q_j}{Y_j}\right)_j\right)d\varpi,
\end{split}
\end{equation*}
where $\lambda_{\varpi}^{\frak{t}}(\frak{m})$ is given in (\ref{orthogonalizedfouriercofficients}).
\end{partoftheo}
\begin{proof} First apply Proposition \ref{archimedeankirillovmodelproposition} to see that there exist functions $\phi_1\in V_{\pi_1}(\frak{c}_{\pi_1})$, $\phi_2\in V_{\pi_2}(\frak{c}_{\pi_2})$ such that $W_{\phi_1}=W_1$, $W_{\phi_2}=W_2$. Set then
\begin{equation*}
\Phi=R_{(l_1)}\phi_1R_{(l_2)}\overline{\phi_2}.
\end{equation*}
Then since $\frak{c}=\mathrm{lcm}(l_1\frak{c}_{\pi_1},l_2\frak{c}_{\pi_2})$, we see that $\Phi$ is right $K(\frak{c})$-invariant. Also, since $W_1,W_2$ are from the Schwarz space, $\phi_1$, $\phi_2$ are smooth and have finite Sobolev norms of arbitrarily large order, so does $\Phi\in L^2(\lrfact{\GLtwo{2}{F}}{\GLtwo{2}{\AAA}}{K(\frak{c})},1)$ (use Proposition \ref{archimedeankirillovmodelproposition} and Proposition \ref{boundonsupremumnorm} together with \cite[Lemma 8.4]{Venkateshsparse}). Then by (\ref{spectraldecomposition}), (\ref{newformoldformdecomposition1}), (\ref{newformoldformdecomposition2}) and the remark made in the beginning of this section, we can decompose $\Phi$ as
\begin{equation}\label{inspectraldecompositionofshiftedconvolutionsumslemmaspectraldecomposition}
\Phi=\Phi_{\sp}+\int_{(\frak{c})} \sum_{\frak{t}|\frak{cc}_{\pi}^{-1}}\Phi_{\varpi,\frak{t}} d\varpi,
\end{equation}
where $\Phi_{\varpi,\frak{t}}\in R^{\frak{t}}(V_{\varpi}(\frak{c}_{\varpi}))$ and $\Phi_{\sp}$ is the orthogonal projection of $\Phi$ to $L_{\sp}$. Now set $W_{\varpi,\frak{t}}=W_{\Phi_{\varpi,\frak{t}}}$. We claim this fulfills the property stated in Part \ref{spectraldecompositionofshiftedconvolutionsumpart1}. Given $Y\in(0,\infty)^{r+s}$, $\frak{n}\subseteq\frak{o}$, $0\neq q\in\frak{n}$, let $(y_{\fin})=\frak{n}$, and $y_{\infty}=Y$. We compute
\begin{equation}\label{inspectraldecompositionofshiftedconvolutionsumsharmonic}
\int_{\lfact{F}{\AAA}} \Phi\left(\begin{pmatrix}y^{-1} & x \cr 0 & 1 \end{pmatrix}\right)\psi(-qx)dx
\end{equation}
in two ways. On the one hand, we use (\ref{inspectraldecompositionofshiftedconvolutionsumslemmaspectraldecomposition}). Here, $q\neq 0$ implies that $\Phi_{\sp}$ has zero contribution to (\ref{inspectraldecompositionofshiftedconvolutionsumsharmonic}), and we obtain
\begin{equation*}
\int_{\lfact{F}{\AAA}} \Phi\left(\begin{pmatrix}y^{-1} & x \cr 0 & 1 \end{pmatrix}\right)\psi(-qx)dx=\int_{(\frak{c})}\sum_{\frak{t}|\frak{cc}_{\varpi}^{-1}} \frac{\lambda_{\varpi}^{\frak{t}}(q\frak{n}^{-1})}{\sqrt{\norm{q\frak{n}^{-1}}}} W_{\varpi,\frak{t}}\left(\left(\frac{q_j}{Y_j}\right)_j\right)d\varpi
\end{equation*}
from (\ref{fourierwhittakerexpansion2oldforms}) and (\ref{fourierwhittakerexpansionofeisensteinseries}). On the other hand, using (\ref{raisinglevel}) and (\ref{fourierwhittakerexpansion2newforms}) together with the choice of $\phi_1$, $\phi_2$, we obtain
\begin{equation*}
\begin{split}
\int_{\lfact{F}{\AAA}} \Phi\left(\begin{pmatrix}y^{-1} & x \cr 0 & 1 \end{pmatrix}\right)\psi(-qx)dx=&\sum_{l_1t_1-l_2t_2=q,0\neq t_1,t_2\in\frak{n}} \frac{\lambda_{\pi_1}(t_1\frak{n}^{-1})\overline{ \lambda_{\pi_2}(t_2\frak{n}^{-1})}}{ \sqrt{\norm{t_1t_2\frak{n}^{-2}}}} \\ &\cdot W_1\left(\left(\frac{(l_1t_1)_j}{Y_j}\right)_j\right) \overline{W_2\left(\left(\frac{(l_2t_2)_j}{Y_j}\right)_j\right)}.
\end{split}
\end{equation*}
The equality of the last two displays is exactly the statement.
\end{proof}

\begin{partoftheo}\label{spectraldecompositionofshiftedconvolutionsumpart2} Conditions as in Part \ref{spectraldecompositionofshiftedconvolutionsumpart1}. Assume $\mathcal{D}$ is a differential operator as in Proposition \ref{boundonkirillovvectors}. Then for any $0<\varepsilon<1/4$ and nonnegative integers $b,c$, we have, for all $y\in F_{\infty}^{\times}$,
\begin{equation*}
\begin{split}
&\int_{(\frak{c})}\sum_{\frak{t}|\frak{cc}_{\varpi}^{-1}} (\norm{\rr_{\varpi}})^{2c} |\mathcal{D}W_{\varpi,\frak{t}}(y)|^2 d\varpi \ll_{F,\varepsilon,\pi_1,\pi_2,a,b,c,P} \norm{(l_1l_2)}^{\varepsilon} ||W_1||^2_{S_{\alpha}}||W_2||^2_{S_{\alpha}} \\ & \cdot \prod_{j=1}^r (|y_j|^{1-\varepsilon}+|y_j|^{1-2\theta-\varepsilon})(\min(1,|y_j|^{-2b})) \prod_{j=r+1}^{r+s}(|y_j|^{3/2}+|y_j|^2)(\min(1,|y_j|^{-2b}))
\end{split}
\end{equation*}
with $\alpha=2(3r+4s+2)+(r+s)(a+b+2c)+2(7r+18s)$.
\end{partoftheo}
\begin{proof} Let $\Phi$ be the function appearing in the proof of Part \ref{spectraldecompositionofshiftedconvolutionsumpart1}. Then by Proposition \ref{boundonkirillovvectors} and a consequence of (\ref{spectraldecompositionofderivatives}) (see \cite[(85)]{BlomerHarcos}), we have
\begin{equation*}
\begin{split}
&\int_{(\frak{c})}\sum_{\frak{t}|\frak{cc}_{\varpi}^{-1}} (\norm{\rr_{\varpi}})^{2c} |\mathcal{D}W_{\varpi,\frak{t}}(y)|^2 d\varpi \ll_{F,\varepsilon,\pi_1,\pi_2,a,b,c,P} \norm{l_1l_2}^{\varepsilon} ||\Phi||^2_{S_{\beta}} \\ & \cdot \prod_{j=1}^r (|y_j|^{1-\varepsilon}+|y_j|^{1-2\theta-\varepsilon})(\min(1,|y_j|^{-2b})) \prod_{j=r+1}^{r+s}(|y_j|^{3/2}+|y_j|^2)(\min(1,|y_j|^{-2b}))
\end{split}
\end{equation*}
with $\beta=2(3r+4s+2)+(r+s)(a+b+2c)$. For any differential operator $\mathcal{D}'\in U(\frak{g})$ of order $k$, we have
\begin{equation*}
||\mathcal{D}'\phi_1||_{\infty}\ll_{F,\pi_1} ||\phi_1||_{S_{k+2(7r+18s)}},\qquad ||\mathcal{D}'\phi_2||_{\infty}\ll_{F,\pi_2} ||\phi_2||_{S_{k+2(7r+18s)}}
\end{equation*}
by Proposition \ref{boundonsupremumnorm}. Since $\lfact{Z(\AAA)\GLtwo{2}{F}}{\GLtwo{2}{\AAA}}$ has finite volume, and the operators $R_{(l_{1,2})}$ do not affect Sobolev norms, $||\Phi||_{S_{\beta}}\ll_{F,\pi_1,\pi_2} ||\phi_1||_{S_{\beta+2(7r+18s)}}||\phi_2||_{S_{\beta+2(7r+18s)}}$. Now Proposition \ref{residueofrankinselbergsquarelfuncionisaboutone} and (\ref{proportionalityconstantisessentiallylfunctionresidue}) completes the proof.
\end{proof}

From the $L^2$-bound presented in Theorem \ref{spectraldecompositionofshiftedconvolutionsum}, Part \ref{spectraldecompositionofshiftedconvolutionsumpart2}, we can easily deduce $L^1$-bounds (using Cauchy-Schwarz, Corollary \ref{densityofeisensteinspectrum2} and Corollary \ref{densityofcuspidalspectrum2}), these are essentially generalizations of \cite[Remark 12]{BlomerHarcos}.

\begin{coro}\label{spectraldecompositionofshiftedconvolutionsumcorollaryeisenstein}
Conditions as in Theorem \ref{spectraldecompositionofshiftedconvolutionsum}. Assume $\mathcal{D}$ is a differential operator as in Proposition \ref{boundonkirillovvectors}. Then for any $0<\varepsilon<1/4$ and nonnegative integers $b,c'$, we have, for all $y\in F_{\infty}^{\times}$,
\begin{equation*}
\begin{split}
&\int_{\mathcal{E}_{1}(\frak{c})}\sum_{\frak{t}|\frak{cc}_{\varpi}^{-1}} (\norm{\rr_{\varpi}})^{c'} |\mathcal{D}W_{\varpi,\frak{t}}(y)| d\varpi \ll_{F,\varepsilon,\pi_1,\pi_2,a,b,c',P} \norm{\frak{l}}^{1/4}\norm{(l_1l_2)}^{\varepsilon} ||W_1||_{S_{\alpha'}}||W_2||_{S_{\alpha'}} \\ & \cdot \prod_{j=1}^r (|y_j|^{1/2-\varepsilon}+|y_j|^{1/2-\theta-\varepsilon})(\min(1,|y_j|^{-b})) \prod_{j=r+1}^{r+s}(|y_j|^{3/4}+|y_j|)(\min(1,|y_j|^{-b}))
\end{split}
\end{equation*}
with $\alpha'=2(3r+4s+2)+(r+s)(a+b+2c'+4(r+2s))+2(7r+18s)$, where $\frak{l}$ stands for the largest square divisor of $\mathrm{lcm}((l_1),(l_2))$.
\end{coro}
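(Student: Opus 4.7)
The plan is to deduce the $L^1$-bound from the $L^2$-bound of Theorem \ref{spectraldecompositionofshiftedconvolutionsum}, Part \ref{spectraldecompositionofshiftedconvolutionsumpart2} by Cauchy--Schwarz against a spectral weight, exactly as indicated in the paragraph preceding the statement. I would set $c=c'+2(r+2s)$, so that the Sobolev order $\alpha$ arising from Part \ref{spectraldecompositionofshiftedconvolutionsumpart2} with this choice coincides with the $\alpha'$ of the corollary.

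First I would apply Cauchy--Schwarz in the joint $(\varpi,\frak{t})$-variable, bounding the left-hand side of the statement by $\mathrm{I}^{1/2}\cdot \mathrm{II}^{1/2}$ with
\begin{equation*}
\mathrm{I}=\int_{\mathcal{E}_{1}(\frak{c})}\sum_{\frak{t}|\frak{cc}_\varpi^{-1}}(\norm{\rr_\varpi})^{2c}|\mathcal{D}W_{\varpi,\frak{t}}(y)|^2\,d\varpi,\qquad \mathrm{II}=\int_{\mathcal{E}_{1}(\frak{c})}\sum_{\frak{t}|\frak{cc}_\varpi^{-1}}(\norm{\rr_\varpi})^{-4(r+2s)}\,d\varpi.
\end{equation*}
Part \ref{spectraldecompositionofshiftedconvolutionsumpart2} with this $c$ controls $\mathrm{I}$; taking square roots produces the Sobolev norms $\|W_{1,2}\|_{S_{\alpha'}}$, the factor $\norm{(l_1l_2)}^{\varepsilon/2}$, and the archimedean $y$-factors displayed in the corollary (the $L^2$-exponents in $y$ halve to the $L^1$-ones).

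The main task is to handle $\mathrm{II}$. I would dyadically split the Eisenstein spectrum by $X=2^n\ge 1$, using the shell in which all $|\nu_{\varpi,j}|$ ($j\le r$) and $\sqrt{|\nu_{\varpi,j}^2-p_{\varpi,j}^2|}$ ($j>r$) are $\le X$ but not all $\le X/2$. Corollary \ref{densityofeisensteinspectrum2} bounds the measure of each shell by $\ll_F X^{r+2s}\norm{\frak{c}_1}$, while the divisor bound gives $\sum_{\frak{t}|\frak{cc}_\varpi^{-1}}1\ll_\varepsilon\norm{\frak{c}}^\varepsilon$. On the shell itself, the definition (\ref{definitionofnormofspectralparameter}) forces $\norm{\rr_\varpi}\gg X$ (either through a real place with $|\nu_j|\asymp X$, or a complex place giving $(1+|\nu_j|+|p_j|)^2\gg X^2$). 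Consequently
\begin{equation*}
\mathrm{II}\ll_\varepsilon \norm{\frak{c}}^\varepsilon\norm{\frak{c}_1}\sum_{n\ge 0} 2^{-3n(r+2s)}\ll_\varepsilon\norm{\frak{c}}^\varepsilon\norm{\frak{c}_1},
\end{equation*}
the geometric series converging since $-3(r+2s)<0$; the exponent $-4(r+2s)$ was picked only to match $\alpha'$, as any exponent with $2(c-c')>r+2s$ would already suffice for convergence.

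To conclude I would convert $\norm{\frak{c}_1}^{1/2}$ into $\norm{\frak{l}}^{1/4}$. For every finite prime $\frak{p}$ one has $v_\frak{p}(\frak{c}_1)=\lfloor v_\frak{p}(\frak{c})/2\rfloor$ with $\frak{c}=\mathrm{lcm}(l_1\frak{c}_{\pi_1},l_2\frak{c}_{\pi_2})$, so
\begin{equation*}
v_\frak{p}(\frak{c}_1)\le \lfloor v_\frak{p}(\mathrm{lcm}(l_1,l_2))/2\rfloor+\max(v_\frak{p}(\frak{c}_{\pi_1}),v_\frak{p}(\frak{c}_{\pi_2}))=v_\frak{p}(\sqrt{\frak{l}})+O_{\pi_1,\pi_2}(1),
\end{equation*}
which yields $\norm{\frak{c}_1}\ll_{\pi_1,\pi_2}\norm{\frak{l}}^{1/2}$, and hence $\norm{\frak{c}_1}^{1/2}\ll_{\pi_1,\pi_2}\norm{\frak{l}}^{1/4}$. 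Finally the extra $\norm{\frak{c}}^{\varepsilon/2}\ll_{\pi_1,\pi_2}\norm{(l_1l_2)}^{\varepsilon/2}$ is absorbed into the stated $\norm{(l_1l_2)}^\varepsilon$. The only step beyond the density lemma that requires any thought is this prime-by-prime conversion of $\norm{\frak{c}_1}$ to $\norm{\frak{l}}$, but it is elementary, so I do not anticipate any serious obstacle in carrying this plan through.
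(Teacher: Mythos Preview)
Your proposal is correct and follows exactly the approach the paper sketches: Cauchy--Schwarz against a convergent spectral weight, Part~\ref{spectraldecompositionofshiftedconvolutionsumpart2} for the $L^2$-factor, and Corollary~\ref{densityofeisensteinspectrum2} (via a dyadic decomposition) for the counting factor. The choice $c=c'+2(r+2s)$ is precisely what makes the Sobolev index come out as $\alpha'$, and your prime-by-prime conversion $\norm{\frak{c}_1}\ll_{\pi_1,\pi_2}\norm{\frak{l}}^{1/2}$ is the right way to pass from the density bound to the stated $\norm{\frak{l}}^{1/4}$.
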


\begin{coro}\label{spectraldecompositionofshiftedconvolutionsumcorollarycuspidal}
Conditions as in Theorem \ref{spectraldecompositionofshiftedconvolutionsum}. Assume $\mathcal{D}$ is a differential operator as in Proposition \ref{boundonkirillovvectors}. Then for any $0<\varepsilon<1/4$ and nonnegative integers $b,c'$, we have, for all $y\in F_{\infty}^{\times}$,
\begin{equation*}
\begin{split}
&\sum_{\varpi\in\mathcal{C}_{1}(\frak{c})}\sum_{\frak{t}|\frak{cc}_{\varpi}^{-1}} (\norm{\rr_{\varpi}})^{c'} |\mathcal{D}W_{\varpi,\frak{t}}(y)| \ll_{F,\varepsilon,\pi_1,\pi_2,a,b,c',P} \norm{(l_1l_2)}^{1/2+\varepsilon} ||W_1||_{S_{\alpha'}}||W_2||_{S_{\alpha'}} \\ & \cdot \prod_{j=1}^r (|y_j|^{1/2-\varepsilon}+|y_j|^{1/2-\theta-\varepsilon})(\min(1,|y_j|^{-b})) \prod_{j=r+1}^{r+s}(|y_j|^{3/4}+|y_j|)(\min(1,|y_j|^{-b}))
\end{split}
\end{equation*}
with $\alpha'=2(3r+4s+2)+(r+s)(a+b+2c'+4(r+2s))+2(7r+18s)$.
\end{coro}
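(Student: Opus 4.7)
The plan is to deduce the $L^1$-bound from the $L^2$-bound of Part \ref{spectraldecompositionofshiftedconvolutionsumpart2} via Cauchy--Schwarz and dyadic decomposition, with Corollary \ref{densityofcuspidalspectrum2} counting cuspidal representations in each dyadic spectral window. First I would break $\mathcal{C}_{1}(\frak{c})$ into dyadic pieces $\mathcal{C}_{1}(\frak{c};X)$ indexed by tuples $X=(X_1,\ldots,X_{r+s})$ of dyadic numbers $X_j\geq 1$, where $\varpi\in\mathcal{C}_{1}(\frak{c};X)$ means $|\nu_{\varpi,j}|\asymp X_j$ at real places and $|\nu_{\varpi,j}^2-p_{\varpi,j}^2|^{1/2}\asymp X_j$ at complex places. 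On each piece Cauchy--Schwarz gives the partial sum as $A(X)\leq B_1(X)^{1/2}B_2(X)^{1/2}$, with $B_1(X)$ the plain count and
\[
B_2(X)=\sum_{\varpi\in\mathcal{C}_{1}(\frak{c};X)}\sum_{\frak{t}\mid\frak{cc}_{\varpi}^{-1}} (\norm{\rr_{\varpi}})^{2c'}|\mathcal{D}W_{\varpi,\frak{t}}(y)|^2.
\]
By Corollary \ref{densityofcuspidalspectrum2}, $B_1(X)^{1/2}\ll_{F,\varepsilon}\prod_{j\leq r}X_j^{1+\varepsilon/2}\prod_{j>r}X_j^{2+\varepsilon/2}\,\norm{\frak{c}}^{1/2+\varepsilon/2}$.

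Next I would trade the weight $c'$ for a stronger weight $C$ at the cost of a negative power of the $X_j$'s. Since every $\varpi\in\mathcal{C}_{1}(\frak{c};X)$ satisfies $\norm{\rr_{\varpi}}\gtrsim\prod_{j\leq r}X_j\prod_{j>r}X_j^2$ by (\ref{definitionofnormofspectralparameter}), we have $(\norm{\rr_{\varpi}})^{2c'}\leq\prod_{j\leq r}X_j^{-2(C-c')}\prod_{j>r}X_j^{-4(C-c')}(\norm{\rr_{\varpi}})^{2C}$ for any $C>c'$. I would set $C=c'+2(r+2s)$, chosen precisely so that plugging $c=C$ into Part \ref{spectraldecompositionofshiftedconvolutionsumpart2} produces the Sobolev order $\alpha=2(3r+4s+2)+(r+s)(a+b+2c'+4(r+2s))+2(7r+18s)=\alpha'$. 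Extending $B_2(X)$ to the entire spectrum (harmless since all summands are nonnegative, and the Eisenstein part is nonnegative as well) and applying Part \ref{spectraldecompositionofshiftedconvolutionsumpart2} then bounds $B_2(X)^{1/2}$ by the same $\prod X_j^{-(C-c')d_j}$ factor times $\norm{(l_1l_2)}^{\varepsilon/2}\,\|W_1\|_{S_{\alpha'}}\|W_2\|_{S_{\alpha'}}\,Y(y)^{1/2}$, where $d_j=1$ at real and $d_j=2$ at complex places and $Y(y)$ is the archimedean factor of Part \ref{spectraldecompositionofshiftedconvolutionsumpart2}.

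Combining, the $X$-dependent exponent of $A(X)$ is $1+\varepsilon/2-(C-c')$ at each real and $2+\varepsilon/2-2(C-c')$ at each complex place; with $C-c'=2(r+2s)\geq 2$ these are all strictly negative, so summing $\sum_X A(X)$ produces a convergent geometric series. Since $\frak{c}=\mathrm{lcm}(l_1\frak{c}_{\pi_1},l_2\frak{c}_{\pi_2})$ gives $\norm{\frak{c}}\ll_{\pi_1,\pi_2}\norm{(l_1l_2)}$, the factor $\norm{\frak{c}}^{1/2+\varepsilon/2}\norm{(l_1l_2)}^{\varepsilon/2}$ collapses into $\norm{(l_1l_2)}^{1/2+\varepsilon}$ (after renaming $\varepsilon$), and using $\sqrt{A+B}\leq\sqrt{A}+\sqrt{B}$ converts $Y(y)^{1/2}$ into the stated product over archimedean places. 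I do not expect any real obstacle beyond the bookkeeping: the only delicate point is aligning the Sobolev order $\alpha'$, which is exactly why the choice $C-c'=2(r+2s)$ is forced, the same choice also being what makes the dyadic summation converge.
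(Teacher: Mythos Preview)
Your proposal is correct and matches the paper's approach exactly: the paper does not spell out a proof but simply says the $L^1$-bounds follow from Part~\ref{spectraldecompositionofshiftedconvolutionsumpart2} via Cauchy--Schwarz and Corollary~\ref{densityofcuspidalspectrum2}, which is precisely the dyadic Cauchy--Schwarz argument you outline. Your bookkeeping of the Sobolev index (forcing $C-c'=2(r+2s)$ to land on $\alpha'$) and of the $\norm{\frak{c}}^{1/2}\to\norm{(l_1l_2)}^{1/2}$ conversion is accurate.
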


\section{A Burgess type subconvex bound for twisted $\GLone{2}$ $L$-functions}\label{chap:burgesssubconvexity}

In this section, as an application of Theorem \ref{spectraldecompositionofshiftedconvolutionsum}, we prove Theorem \ref{subconvextheorem}. For totally real fields, this was proved by Blomer and Harcos in \cite{BlomerHarcos}. We also note that for arbitrary number fields, Wu \cite{Wu} recently proved this result, using a different method. Our approach is the extension of the one in \cite[Section 3.3]{BlomerHarcos}.

Assume that $\pi$ is an irreducible automorphic cuspidal representation. Let $\frak{q}\subseteq\frak{o}$ be an ideal, $\chi$ a Hecke character of conductor $\frak{q}$. We may also think of $\chi$ as a character on the group of fractional ideals coprime to $\frak{q}$, extended to be $0$ on other ideals. There exists characters $\chi_{\fin}$ of $(\rfact{\frak{o}}{\frak{q}})^{\times}$ and $\chi_{\infty}$ of $F_{\infty}^{\times}$ satisfying $\chi((t))=\chi_{\fin}(t)\chi_{\infty}(t)$ for all $t\in\frak{o}$ coprime to $\frak{q}$. The transition from one meaning to another of Hecke characters can be found at several places (see \cite[Sections 1.7 and 3.1]{Bump}, for example). Our goal is to estimate $L(1/2,\pi\otimes\chi)$ in terms of $\norm{\frak{q}}$. Fix any $\varepsilon>0$. From now on, the implicit constants in $\ll$ are always meant to depend on $F,\varepsilon,\pi,\chi_{\infty}$, even if it is not emphasized in the subscript like $\ll_{F,\varepsilon,\pi,\chi_{\infty}}$. Fix an ideal $\frak{n}$ coprime to $\frak{q}$ satisfying 
\begin{equation}\label{magnitudeoffrakn}
\norm{\frak{n}}\ll \norm{\frak{q}}^{\varepsilon}
\end{equation}
and note that in every narrow ideal class, there is a representative $\frak{n}$ with these properties.

First we introduce the following notation. For given positive real numbers $a<b$,
\begin{equation}\label{definitionofinterval}
[[a,b]]=\{x\in F_{\infty,+}^{\times}:a\leq |x_j|\leq b\}.
\end{equation}

Let $G_0$ be a smooth, compactly supported function on $F_{\infty,+}^1=\{x\in F_{\infty,+}^{\times}:|x|_{\infty}=1\}$ satisfying that $\sum_{u\in\frak{o}^{\times}_+}G_0(ux)=1$ for all $x\in F_{\infty,+}^1$. We extend this function to $F_{\infty,+}^{\times}$ as $G(x)=G_0(x/|x|_{\infty})$, then $\sum_{u\in\frak{o}^{\times}_+}G(ux)=1$ for all $x\in F_{\infty,+}^{\times}$. Assume that $G_0$ is supported on $[[c_1,c_2]]$, then $G$ is supported on $F_{\infty,+}^{\diag}[[c_1,c_2]]$, where $c_1,c_2$ are constants depending only on $F$ (recall (\ref{definitionofinterval})). Fix moreover a compact fundamental domain $\mathcal{G}_0$ for the action of $\frak{o}^{\times}_+$ on $F_{\infty,+}^1$ and let $\mathcal{G}=F_{\infty,+}^{\diag}\mathcal{G}_0$ be its extension to $F_{\infty,+}^{\times}$.

\subsection{The amplification method}

Let $\xi$ be a character of $(\rfact{\frak{o}}{\frak{q}})^{\times}$. Parametrized by $v=(v_1,\ldots,v_{r+s})\in (i\RR)^{r+s},p=(p_{r+1},\ldots,p_{r+s})\in\ZZ^{s}$, assume that $W_{v,p}$ are functions on $F_{\infty,+}^{\times}$ satisfying the following properties:
\begin{enumerate}[(i)]
\begin{item}\label{cond1}
$W_{v,p}$ is smooth and supported on $[[c_3,c_4]]$ for some $c_3<c_1$ and $c_4>c_2$ depending only on $F$;
\end{item}
\begin{item}\label{cond2} for any differential operator $\mathcal{D}$ of the form
\begin{equation*}
\mathcal{D}=\left(\left(\frac{\partial}{\partial y_j}\right)^{\mu_j}_{j\leq r} \left(\frac{\partial}{\partial y_j}\right)^{\mu_{j,1}}_{j>r} \left(\frac{\partial}{\partial \overline{y_j}}\right)^{\mu_{j,2}}_{j>r}\right),
\end{equation*}
with nonnegative integers $\mu_{j,*}$, we have
\begin{equation*}
\mathcal{D}W_{v,p}(y)\ll_{\mathcal{D}}\prod_{j=1}^r(1+|v_j|)^{\mu_j} \prod_{j=r+1}^{r+s}(1+|v_j|+|p_j|)^{\mu_{j,1}+\mu_{j,2}}.
\end{equation*}
Compare this with (\ref{definitionofspectralparameter}) and (\ref{definitionofnormofspectralparameter}), and for convenience, introduce
\begin{equation}\label{definitionofparameternorm}
\norm{v,p}=\prod_{j=1}^r(1+|v_j|) \prod_{j=r+1}^{r+s}(1+|v_j|+|p_j|)^2.
\end{equation}
\end{item}
\end{enumerate}

Then set
\begin{equation}\label{definitionofbigl}
\LL_{\xi}(v,p)=\sum_{0<<t\in\frak{n}} \frac{\lambda_{\pi}(t\frak{n}^{-1})\xi(t)}{\sqrt{\norm{t\frak{n}^{-1}}}} W_{v,p}\left(\frac{t}{Y^{1/(r+2s)}}\right),
\end{equation}
where $0<<t$ means that we sum over the totally positive elements. The only assumption on the positive real number $Y$ is that
\begin{equation}\label{magnitudeofy}
Y\ll \norm{\frak{q}}^{1+\varepsilon}.
\end{equation}
Introduce $\mathcal{K}=\frak{n}\cap F_{\infty,+}^{\diag}[[c_3,c_4]]$. We see that the numbers $t$ that give a positive contribution are all in the set $\frak{n}\cap\mathcal{K}$ and also satisfy $t\in[[c_3,c_4]]Y^{1/(r+2s)}$, this latter implies $|t|_{\infty}\asymp_F Y$.

Assume $L$ (the amplification length) is a further parameter satisfying
\begin{equation}\label{magnitudeofl}
\log L\asymp \log\norm{\frak{q}}.
\end{equation}

\begin{lemm} Denote by $\Pi_{\frak{q},+}(L,2L)$ the set of totally positive, principal prime ideals $\frak{l}\subseteq\frak{o}$ satisfying $\norm{\frak{l}}\in [L,2L]$ and $\frak{l}\nmid\frak{q}$. Set $\pi_{\frak{q},+}(L,2L)=\#\Pi_{\frak{q},+}(L,2L)$. Then
\begin{equation*}\pi_{\frak{q},+}(L,2L) \gg L\norm{\frak{q}}^{-\varepsilon}.
\end{equation*}
\end{lemm}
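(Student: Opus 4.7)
The plan is to apply the prime ideal theorem in a fixed narrow ideal class of $F$. A prime ideal $\frak{l}\subseteq\frak{o}$ is totally positive principal exactly when it represents the trivial element of the narrow class group $\mathrm{Cl}^+(F)$. The analytic theory of Hecke $L$-functions attached to characters of $\mathrm{Cl}^+(F)$ (a direct generalization of Landau's prime ideal theorem) yields
\begin{equation*}
\#\{\frak{l}:\frak{l}\ \mathrm{prime,\ totally\ positive\ principal},\ \norm{\frak{l}}\leq x\}=\frac{1}{h^+(F)}\cdot\frac{x}{\log x}+O_F\!\left(\frac{x}{(\log x)^2}\right),
\end{equation*}
where $h^+(F)$ denotes the narrow class number of $F$. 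Taking the difference at $x=2L$ and $x=L$ gives at least $\gg_F L/\log L$ such primes with $L\leq\norm{\frak{l}}\leq 2L$, provided $L$ is large enough in terms of $F$ (which will be automatic since $\norm{\frak{q}}$ is assumed large and $\log L\asymp\log\norm{\frak{q}}$).

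Next I would subtract those primes which divide $\frak{q}$. Since each prime ideal divisor of $\frak{q}$ has norm at least $2$, the number of such divisors satisfies $\omega(\frak{q})\leq\log_2\norm{\frak{q}}$, so the loss in passing from totally positive principal primes in $[L,2L]$ to $\Pi_{\frak{q},+}(L,2L)$ is at most $O(\log\norm{\frak{q}})$.

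Finally, I combine these with the hypothesis (\ref{magnitudeofl}) that $\log L\asymp\log\norm{\frak{q}}$. From $\log L\ll\log\norm{\frak{q}}\ll_{\varepsilon}\norm{\frak{q}}^{\varepsilon}$ we get $L/\log L\gg L\norm{\frak{q}}^{-\varepsilon}$; from $\log L\gg\log\norm{\frak{q}}$ we get $L\gg\norm{\frak{q}}^{c}$ for some $c>0$ depending on $F$ (i.e., on the implicit constant in (\ref{magnitudeofl})), which ensures that the subtracted term $O(\log\norm{\frak{q}})$ is dominated by $L/\log L$. Putting this together yields $\pi_{\frak{q},+}(L,2L)\gg L\norm{\frak{q}}^{-\varepsilon}$, as claimed.

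There is no real obstacle: the argument is a standard application of the prime ideal theorem in narrow ideal classes combined with the elementary bound $\omega(\frak{q})\ll\log\norm{\frak{q}}$, and the size conditions (\ref{magnitudeofl}) are precisely what is needed to absorb the relevant logarithmic losses into $\norm{\frak{q}}^{\varepsilon}$.
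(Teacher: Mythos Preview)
Your proof is correct and follows essentially the same approach as the paper: both rely on the equidistribution of prime ideals in narrow ideal classes (the paper simply cites Narkiewicz for this) together with the standing assumption \eqref{magnitudeofl} to absorb the $\log$-factors into $\norm{\frak{q}}^{\varepsilon}$. Your write-up is more explicit than the paper's one-line citation, in particular in handling the primes dividing $\frak{q}$ via the trivial bound $\omega(\frak{q})\ll\log\norm{\frak{q}}$.
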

\begin{proof} This follows immediately from the results \cite[Corollary 6 of Proposition 7.8 and Proposition 7.9(ii)]{Narkiewicz} about the natural density of prime ideals in narrow ideal classes. (See also \cite[Chapter VII, \textsection 13]{Neukirch} for analogous statements about the Dirichlet density.)
\end{proof}

Therefore,
\begin{equation*}
\begin{split}
|\LL_{\chi_{\fin}}(v,p)|^2&= \frac{1}{\pi_{\frak{q},+}(L,2L)^2} \left|\LL_{\chi_{\fin}}(v,p)\sum_{\substack{l\in\frak{o}\cap\mathcal{G}\\ (l)\in\Pi_{\frak{q},+}(L,2L)}} 1\right|^2\\ &\ll \frac{\norm{\frak{q}}^{\varepsilon}}{L^2} \sum_{\xi\in\widehat{(\rfact{\frak{o}}{\frak{q}})^{\times}}} \left|\LL_{\xi}(v,p)\sum_{\substack{l\in\frak{o}\cap\mathcal{G}\\ (l)\in\Pi_{\frak{q},+}(L,2L)}} \xi(l)\overline{\chi_{\fin}(l)}\right|^2.
\end{split}
\end{equation*}
Observe that the $\xi$-sum is the square integral of the Fourier transform of the function
\begin{equation*}
(\rfact{\frak{o}}{\frak{q}})^{\times}\ni x\mapsto \sum_{t\in\frak{n}\cap\mathcal{K}} \sum_{\substack{l\in\frak{o}\cap\mathcal{G}\\ (l)\in\Pi_{\frak{q},+}(L,2L)\\ lt\equiv x \pmod{\frak{q}}}} \overline{\chi_{\fin}(l)} \frac{\lambda_{\pi}(t\frak{n}^{-1})}{\sqrt{\norm{t\frak{n}^{-1}}}} W_{v,p}\left(\frac{t}{Y^{1/(r+2s)}}\right),
\end{equation*}
so Plancherel gives
\begin{equation*}
\begin{split}
|\LL_{\chi_{\fin}}(v,p)|^2&\ll \frac{\varphi(\frak{q})\norm{\frak{q}}^{\varepsilon}}{L^2} \\ & \cdot \sum_{x\in(\rfact{\frak{o}}{\frak{q}})^{\times}} \left|\sum_{\substack{l\in\frak{o}\cap\mathcal{G}\\ (l)\in\Pi_{\frak{q},+}(L,2L)}} \overline{\chi_{\fin}(l)} \sum_{\substack{t\in\frak{n}\cap\mathcal{K}\\ lt\equiv x \pmod{\frak{q}}}} \frac{\lambda_{\pi}(t\frak{n}^{-1})}{\sqrt{\norm{t\frak{n}^{-1}}}} W_{v,p}\left(\frac{t}{Y^{1/(r+2s)}}\right) \right|^2.
\end{split}
\end{equation*}
This can be further majorized using $\varphi(\frak{q})\leq\norm{\frak{q}}$ and $(\rfact{\frak{o}}{\frak{q}})^{\times}\subset\rfact{\frak{o}}{\frak{q}}$, giving
\begin{equation}\label{amplifiedsecondmoment}
\begin{split}
|\LL_{\chi_{\fin}}(v,p)|^2&\ll \frac{\norm{\frak{q}}^{1+\varepsilon}}{L^2}  \sum_{\substack{l_1,l_2\in\frak{o}\cap\mathcal{G}\\ (l_1),(l_2)\in\Pi_{\frak{q},+}(L,2L)}} \overline{\chi_{\fin}(l_1)}\chi_{\fin}(l_2)\\ &\sum_{\substack{t_1,t_2\in\frak{n}\cap\mathcal{K}\\ l_1t_1-l_2t_2\in\frak{q}}} \frac{\lambda_{\pi}(t_1\frak{n}^{-1}) \overline{\lambda_{\pi}(t_2\frak{n}^{-1})}}{\sqrt{\norm{t_1t_2\frak{n}^{-2}}}} W_{v,p}\left(\frac{t_1}{Y^{1/(r+2s)}}\right) \overline{W_{v,p}\left(\frac{t_2}{Y^{1/(r+2s)}}\right)}.
\end{split}
\end{equation}
In (\ref{amplifiedsecondmoment}), the contribution of $l_1t_1-l_2t_2=0$ will be referred as the diagonal contribution $DC$, and that of $l_1t_1-l_2t_2\neq 0$ as the off-diagonal contribution $ODC$.
We will estimate them separately, optimize in the choice of the parameter $L$ (taking care about (\ref{magnitudeofl})), which will give rise to an estimate of $\LL_{\chi_{\fin}}(v,p)$. Using Mellin inversion, this bound on $\LL_{\chi_{\fin}}(v,p)$ (with implicit parameters satisfying (\ref{magnitudeoffrakn}) and (\ref{magnitudeofy})) will give rise to a Burgess type subconvex bound on $L(1/2,\pi\otimes\chi)$.

\subsection{Estimate of the diagonal contribution}

First focus on $DC$. Then by Cauchy-Schwarz,
\begin{equation*}
DC \ll \frac{\norm{\frak{q}}^{1+\varepsilon}}{L^2}\sum_{\substack{l\in\frak{o}\cap\mathcal{G}\\ (l)\in\Pi_{\frak{q},+}(L,2L)}} \sum_{\substack{t\in\frak{n}\cap\mathcal{K}\\ |t|_{\infty}\asymp_F Y}} \frac{|\lambda_{\pi}(t\frak{n}^{-1})|^2}{\norm{t\frak{n}^{-1}}} |\{(l',t')\in(\frak{o}\cap\mathcal{G})\times(\frak{n}\cap\mathcal{K}):l't'=lt\}|.
\end{equation*}
Here, $|\{(l',t')\in(\frak{o}\cap\mathcal{G})\times(\frak{n}\cap\mathcal{K}):l't'=lt\}|$ is at most the number of divisors of $(lt)$, which is $\ll \norm{(lt)}^{\varepsilon}\ll (LY)^{\varepsilon}$. By (\ref{boundonweightedsumofsquaresofheckeeigenvalues}), (\ref{magnitudeoffrakn}) and (\ref{magnitudeofy}), we see
\begin{equation*}
\sum_{\substack{t\in\frak{n}\cap\mathcal{K}\\ |t|_{\infty}\asymp_FY}} \frac{|\lambda_{\pi}(t\frak{n}^{-1})|^2}{\norm{t\frak{n}^{-1}}}\ll \norm{\frak{q}}^{\varepsilon},
\end{equation*}
and estimate the number of prime ideals $(l)$ trivially by $\ll L$. Altogether,
\begin{equation}\label{diagonalcontributionestimate}
DC \ll \frac{\norm{\frak{q}}^{1+\varepsilon}}{L}.
\end{equation}

\subsection{Off-diagonal contribution: spectral decomposition and Eisenstein part}

\paragraph{Spectral decomposition}

The estimate of the off-diagonal contribution requires much more work. Assume $\mathcal{G}_0$ is supported on $[[c_5,c_6]]$ for some constants $c_5,c_6$ depending only on $F$. Then only $l_1,l_2\in[[c_5L^{1/(r+2s)},c_6L^{1/(r+2s)}]]$ and $t_1,t_2\in[[c_3Y^{1/(r+2s)},c_4Y^{1/(r+2s)}]]$ have nonzero contribution. If $l_1,l_2,t_1,t_2$ satisfy these constraints, then
\begin{equation*}
l_1t_1-l_2t_2\in\mathcal{B}=\{x\in F_{\infty}: |x_j|\leq c_7(LY)^{1/(r+2s)}\}
\end{equation*}
with $c_7=2c_4c_6$. Now a term in $ODC$ corresponding to some fixed $l_1,l_2$ can be written as
\begin{equation}\label{offdiagonalcontribution2}
\sum_{\substack{q\in\frak{qn}\cap\mathcal{B}\\ q\neq 0}} \sum_{\substack{l_1t_1-l_2t_2=q\\ 0\neq t_1,t_2\in\frak{n}}} \frac{\lambda_{\pi}(t_1\frak{n}^{-1}) \overline{\lambda_{\pi}(t_2\frak{n}^{-1})}}{\sqrt{\norm{t_1t_2\frak{n}^{-2}}}} W_1\left(\frac{l_1t_1}{(LY)^{1/(r+2s)}};v,p\right) \overline{W_2\left(\frac{l_2t_2}{(LY)^{1/(r+2s)}};v,p\right)},
\end{equation}
where $W_1,W_2$ are smooth functions on $F_{\infty,+}^{\times}$ defined as
\begin{equation*}
W_1(y;v,p)=W_{v,p}(yL^{1/(r+2s)}/l_1),\qquad W_2(y;v,p)=W_{v,p}(yL^{1/(r+2s)}/l_2).
\end{equation*}
Now by the assumptions made on $W_{v,p}$ and $l_1,l_2$, we have that $W_1,W_2$ are smooth of compact support $[[c_8,c_9]]$ (where $c_8,c_9$ depend on $F$) and for any differential operator $\mathcal{D}$ of the form
\begin{equation*}
\mathcal{D}=\left(\left(\frac{\partial}{\partial y_j}\right)^{\mu_j}_{j\leq r} \left(\frac{\partial}{\partial y_j}\right)^{\mu_{j,1}}_{j>r} \left(\frac{\partial}{\partial \overline{y_j}}\right)^{\mu_{j,2}}_{j>r}\right),
\end{equation*}
with nonnegative integers $\mu_{j,*}$, we have
\begin{equation}\label{assumptionalboundonderivatives}
\mathcal{D}W_{1,2}(y;v,p)\ll_{\mathcal{D}}\norm{v,p}^{\mu},
\end{equation}
where $\mu=\max_j(\mu_{j,*})$ (recall (\ref{definitionofparameternorm})).

Now by Theorem \ref{spectraldecompositionofshiftedconvolutionsum}, (\ref{offdiagonalcontribution2}) can be rewritten as
\begin{equation}\label{offdiagonalcontributiondecomposed}
\sum_{0\neq q\in\frak{qn}\cap\mathcal{B}} \int_{(\frak{c})} \sum_{\frak{t}|\frak{cc}_{\varpi}^{-1}} \frac{\lambda_{\varpi}^{\frak{t}}(q\frak{n}^{-1})}{ \sqrt{\norm{q\frak{n}^{-1}}}} W_{\varpi,\frak{t}}\left(\frac{q}{(LY)^{1/(r+2s)}};v,p\right) d\varpi,
\end{equation}
where $\frak{c}=\frak{c}_{\pi}\mathrm{lcm}((l_1),(l_2))$.

\paragraph{Eisenstein spectrum} \label{subsec:chap:burgesssubconvexity_offdiagonalandisenstein_eisenstein}

First we estimate the contribution of the Eisenstein spectrum to (\ref{offdiagonalcontributiondecomposed}). We use Corollary \ref{spectraldecompositionofshiftedconvolutionsumcorollaryeisenstein} with $\mathcal{D}=1,a=c'=0,b=2$. The largest square divisor of $\mathrm{lcm}((l_1),(l_2))$ is $\frak{o}$, hence
\begin{equation*}
\int_{\mathcal{E}_{1}(\frak{c})} \sum_{\frak{t}|\frak{cc}_{\varpi}^{-1}} |W_{\varpi,\frak{t}}(y;v,p)|\ll \norm{(l_1l_2)}^{\varepsilon} ||W_1||_{S_{\alpha_1}} ||W_2||_{S_{\alpha_1}}
\end{equation*}
with some positive integer $\alpha_1$ depending only on $F$, uniformly in $y,v,p$. Moreover, by \cite[Lemma 8.4]{Venkateshsparse} and (\ref{assumptionalboundonderivatives}), for any positive $\alpha$,
\begin{equation}\label{changefromsobolevtoparameternorm}
||W_{1,2}||_{S^{\alpha}} \ll_{\alpha} \norm{v,p}^{2\alpha}
\end{equation}
giving
\begin{equation*}
\int_{\mathcal{E}_{1}(\frak{c})} \sum_{\frak{t}|\frak{cc}_{\varpi}^{-1}} |W_{\varpi,\frak{t}}(y;v,p)|\ll \norm{(l_1l_2)}^{\varepsilon} \norm{v,p}^{4\alpha_1}.
\end{equation*}

Taking into account (\ref{boundonheckeeigenvaluesineisensteinspectrum}), (\ref{magnitudeoffrakn}), (\ref{magnitudeofy}) and (\ref{magnitudeofl}), we see that the contribution of the Eisenstein spectrum to (\ref{offdiagonalcontributiondecomposed}) is
\begin{equation*}
\ll \norm{v,p}^{4\alpha_1} \norm{\frak{q}}^{\varepsilon}\sum_{0\neq q\in\frak{qn}\cap\mathcal{B}} \frac{\norm{\gcd(\frak{c},(q))}}{\sqrt{\norm{(q)}}}.
\end{equation*}
In the sum, each ideal $(q)$ appears with multiplicity $\ll \norm{\frak{q}}^{\varepsilon}$. Indeed, each ideal $(q)\subseteq\frak{o}$ has a generator $q$ satisfying $|q_j|\geq c_5$ at each archimedean place. Hence the possible units $\epsilon$ for which $q\epsilon\in\mathcal{B}$ all satisfy $|\epsilon_j|\leq c_{10}(LY)^{1/(r+2s)}$ at each place, for some constant $c_{10}$ depending only on $F$. The number of such units is $\ll \log(\norm{\frak{q}})^{r+s-1}$ by (\ref{magnitudeofy}) and (\ref{magnitudeofl}). Then the above display is
\begin{equation*}
\ll \norm{v,p}^{4\alpha_1} \norm{\frak{q}}^{2\varepsilon} \sum_{\substack{0\neq (q)\subseteq \frak{qn}\\ \norm{(q)}\ll LY}} \frac{\norm{\gcd(\frak{c},(q))}}{\sqrt{\norm{(q)}}}.
\end{equation*}
Here, the sum is $\ll \norm{\frak{q}}^{-1+\varepsilon}(LY)^{1/2}$, since $\gcd(\frak{c},(q))=\gcd(\frak{c}_{\pi},(q))$, which has norm $O_{F,\pi}(1)$.

Altogether, using again (\ref{magnitudeofy}), in (\ref{offdiagonalcontributiondecomposed}), the Eisenstein spectrum has contribution
\begin{equation}\label{eisensteinspectrumcontribution}
\ll \norm{v,p}^{4\alpha_1} \norm{\frak{q}}^{-1/2+\varepsilon}L^{1/2},
\end{equation}
which is analogous to \cite[(116)]{BlomerHarcos}.

\subsection{Off-diagonal contribution: cuspidal spectrum}

Set
\begin{equation*}
\mathcal{C}_{1}(\frak{c},\varepsilon)= \{\varpi\in\mathcal{C}_{1}(\frak{c}): \norm{\rr_{\varpi}} \leq\norm{\frak{q}}^\varepsilon\}.
\end{equation*}
Later we will prove that the contribution of representations outside $\mathcal{C}_{1}(\frak{c},\varepsilon)$ is small. So restrict to $\mathcal{C}_{1}(\frak{c},\varepsilon)$, and fix also the sign of $q$ as follows. For any sign $\xi\in\{\pm 1\}^r$,
set
\begin{equation*}
\mathcal{B}(\xi)=\{y\in\mathcal{B}: \sign(y)=\xi\}.
\end{equation*}
Then focus on the quantity
\begin{equation}\label{cuspidalessentialcontribution}
\sum_{q\in\frak{qn}\cap \mathcal{B}(\xi)} \sum_{\varpi\in\mathcal{C}_{1}(\frak{c},\varepsilon)} \sum_{\frak{t}|\frak{cc}_{\varpi}^{-1}} \frac{\lambda_{\varpi}^{\frak{t}}(q\frak{n}^{-1})}{ \sqrt{\norm{q\frak{n}^{-1}}}} W_{\varpi,\frak{t}} \left(\frac{q}{(LY)^{1/(r+2s)}};v,p\right).
\end{equation}

We follow again \cite{BlomerHarcos}. Consider the Mellin transform
\begin{equation}\label{mellintransform}
\hat{W}_{\varpi,\frak{t}}^{\xi}(v',p';v,p)= \int_{F_{\infty,+}^{\times}} W_{\varpi,\frak{t}}(\xi y;v,p) \prod_{j=1}^{r+s}|y_j|^{v'_j} \prod_{j=r+1}^{r+s} \left(\frac{y_j}{|y_j|}\right)^{p'_j} d_{\infty}^{\times}y.
\end{equation}
We would like to invert this. As for $p'$, observe that $W_{\varpi,\frak{t}}(y;v,p)$ is continuous on the set where each $|y_j|$ is fixed (which is the product of $s$ circles), so the standard Fourier analysis of the circle group is applicable. Also from Corollary \ref{spectraldecompositionofshiftedconvolutionsumcorollarycuspidal}, we see that the set $(i\RR)^{r+s}$ (which is the product of $r+s$ lines) can be used for Mellin inversion (see \cite[17.41]{GR}). Therefore, (\ref{cuspidalessentialcontribution}) is
\begin{equation*}
\begin{split}
&\ll \sum_{p'\in\ZZ^s} \int_{(i\RR)^{r+s}}  (LY)^{(v'_1+\ldots+v'_{r+s})/(r+2s)} \\ & \cdot \sum_{\varpi\in\mathcal{C}_{1}(\frak{c},\varepsilon)} \sum_{\frak{t}|\frak{cc}_{\varpi}^{-1}} \left(\hat{W}_{\varpi,\frak{t}}^{\xi}(v',p';v,p) \sum_{q\in\frak{qn}\cap\mathcal{B}(\xi)} \frac{\lambda_{\varpi}^{\frak{t}}(q\frak{n}^{-1})}{ \sqrt{\norm{q\frak{n}^{-1}}}} \prod_{j=1}^{r+s}|q_j|^{-v'_j} \prod_{j=r+1}^{r+s}\left(\frac{q_j}{|q_j|}\right)^{-p'_j}\right) dv'_j.
\end{split}
\end{equation*}
By Cauchy-Schwarz, this is
\begin{equation}\label{cuspidalspectrumbeforesplittingmellinandarithmetic}
\begin{split}
&\ll \sum_{p'\in\ZZ^s} \int_{(i\RR)^{r+s}} \left(\sum_{\varpi\in\mathcal{C}_{1}(\frak{c},\varepsilon)} \sum_{\frak{t}|\frak{cc}_{\varpi}^{-1}} \left|\hat{W}_{\varpi,\frak{t}}^{\xi}(v',p';v,p)\right|^2\right)^{1/2} \\ & \cdot \left(\sum_{\varpi\in\mathcal{C}_{1}(\frak{c},\varepsilon)} \sum_{\frak{t}|\frak{cc}_{\varpi}^{-1}} \left|\sum_{q\in\frak{qn}\cap\mathcal{B}(\xi)} \frac{\lambda_{\varpi}^{\frak{t}}(q\frak{n}^{-1})}{ \sqrt{\norm{q\frak{n}^{-1}}}} \prod_{j=1}^{r+s}|q_j|^{-v'_j} \prod_{j=r+1}^{r+s}\left(\frac{q_j}{|q_j|}\right)^{-p'_j}\right|^2 \right)^{1/2} |dv'_j|.
\end{split}
\end{equation}
In what follows, we estimate the Mellin part
\begin{equation}\label{mellinpart}
\left(\sum_{\varpi\in\mathcal{C}_{1}(\frak{c},\varepsilon)} \sum_{\frak{t}|\frak{cc}_{\varpi}^{-1}} \left|\hat{W}_{\varpi,\frak{t}}^{\xi}(v',p';v,p)\right|^2\right)^{1/2}
\end{equation}
and the arithmetic part
\begin{equation}\label{arithmeticpart}
\left(\sum_{\varpi\in\mathcal{C}_{1}(\frak{c},\varepsilon)} \sum_{\frak{t}|\frak{cc}_{\varpi}^{-1}} \left|\sum_{q\in\frak{qn}\cap\mathcal{B}(\xi)} \frac{\lambda_{\varpi}^{\frak{t}}(q\frak{n}^{-1})}{ \sqrt{\norm{q\frak{n}^{-1}}}} \prod_{j=1}^{r+s}|q_j|^{-v'_j} \prod_{j=r+1}^{r+s}\left(\frac{q_j}{|q_j|}\right)^{-p'_j}\right|^2 \right)^{1/2}
\end{equation}
separately.

\paragraph{Estimate of the Mellin part}

Recall the definition (\ref{mellintransform}) of the Mellin transform. Our plan is to insert differentiations (using that $W$'s are highly differentiable) to show that the Mellin part decays fast in terms of $\norm{v',p'}$.

At real places ($j\leq r$), for $v'_j\neq 0$,
\begin{equation*}
\int_{\RR_+^{\times}} W(y_j)y_j^{v'_j}d_{\RR}^{\times}y_j= -\frac{1}{v'_j} \int_{\RR_+^{\times}} y_j\frac{\partial}{\partial y_j}W(y_j)y_j^{v'_j}d_{\RR}^{\times}y_j,
\end{equation*}
so at those real places, where $|v'_j|\geq 1$, we can gain a factor $|v'_j|^{-1}$ using the differential operator $y_j(\partial/\partial y_j)$. The complex places ($j>r$) can be handled similarly. For $v'_j\neq 0$,
\begin{equation*}
\int_{\CC^{\times}} W(y_j)|y_j|^{v'_j}\left(\frac{y_j}{|y_j|}\right)^{p'_j}d_{\CC}^{\times}y_j= -\frac{1}{v'_j}\int_{\CC^{\times}} |y_j|\frac{\partial}{\partial|y_j|} W(y_j)|y_j|^{v'_j}\left(\frac{y_j}{|y_j|}\right)^{p'_j}d_{\CC}^{\times}y_j,
\end{equation*}
while for $p'_j\neq 0$,
\begin{equation*}
\int_{\CC^{\times}} W(y_j)|y_j|^{v'_j}\left(\frac{y_j}{|y_j|}\right)^{p'_j}d_{\CC}^{\times}y_j= -\frac{1}{ip'_j}\int_{\CC^{\times}} \frac{\partial}{\partial (y_j/|y_j|)} W(y_j)|y_j|^{v'_j}\left(\frac{y_j}{|y_j|}\right)^{p'_j}d_{\CC}^{\times}y_j.
\end{equation*}
This means that at those complex places, where $|v'_j|\geq 1$ (or $|p'_j|\geq 1$, respectively), we can gain a factor $|v'_j|^{-1}$ (or $|p'_j|^{-1}$, respectively), by inserting the differential operator $y(\partial /\partial y)$ (or $\partial/\partial (y/|y|)$, respectively).

A simple calculation shows that for any real-differentiable complex function $f(z)$ with $z=re^{i\theta}$ ($r>0$, $\theta\in[0,2\pi]$), both $r\partial f/\partial r$ and $\partial f/\partial \theta$ are $\ll |z\partial f/\partial z| + |\overline{z}\partial f/\partial \overline{z}|$.

Therefore, set the differential operators
\begin{equation*}
\mathcal{D}_{(e,f,g)}=\left(\left(\left(y_j\frac{\partial}{\partial y_j}\right)^{e_j}\right)_{j\leq r}, \left(\left(y_j\frac{\partial}{\partial y_j}\right)^{f_j}\right)_{j>r}, \left(\left(\overline{y_j}\frac{\partial}{\partial \overline{y_j}}\right)^{g_j}\right)_{j>r} \right),
\end{equation*}
where $0\leq e_j\leq 3$ ($j\leq r$), $0\leq f_j\leq 6$, $0\leq g_j\leq 6$ ($j>r$).
Then the above argument, together with (\ref{mellintransform}) and Cauchy-Schwarz, implies that (\ref{mellinpart}) is
\begin{equation*}
\begin{split}
\ll (\norm{v',p'})^{-3/2} \sum_{(e,f,g)} & \left( \int_{F_{\infty,+}^{\times}} \int_{F_{\infty,+}^{\times}} \left( \sum_{\varpi\in\mathcal{C}_{1}(\frak{c},\varepsilon)} \sum_{\frak{t}|\frak{cc}_{\varpi}^{-1}} |\mathcal{D}_{(e,f,g)}W_{\varpi,\frak{t}}(y;v,p)|^2 \right)^{1/2} \right. \\ & \qquad \qquad \left. \left( \sum_{\varpi\in\mathcal{C}_{1}(\frak{c},\varepsilon)} \sum_{\frak{t}|\frak{cc}_{\varpi}^{-1}} |W_{\varpi,\frak{t}}(y';v,p)|^2 \right)^{1/2} d_{\infty}^{\times}yd_{\infty}^{\times}y' \right)^{1/2}.
\end{split}
\end{equation*}
Now we apply Theorem \ref{spectraldecompositionofshiftedconvolutionsum}, Part \ref{spectraldecompositionofshiftedconvolutionsumpart2} with $a=6,b=2,c=0$ in the first sum, and with $a=0,b=2,c=0$ in the second sum. Together with (\ref{changefromsobolevtoparameternorm}), this implies that the integrand is
\begin{equation*}
\begin{split}
\ll \norm{\frak{q}}^{\varepsilon}\norm{v,p}^{4\alpha_2} &\prod_{j=1}^{r}\min(|y_j|^{1/4},|y_j|^{-3/2})\min(|y'_j|^{1/4},|y'_j|^{-3/2}) \\ &\cdot \prod_{j=r+1}^{r+s}\min(|y_j|^{3/4},|y_j|^{-1})\min(|y'_j|^{3/4},|y'_j|^{-1})
\end{split}
\end{equation*}
with some positive integer $\alpha_2$ depending only on $F$.
Altogether, the Mellin part (\ref{mellinpart}) is
\begin{equation}\label{estimateofmellinpart}
\ll \norm{\frak{q}}^{\varepsilon} \norm{v,p}^{4\alpha_2} \norm{v',p'}^{-3/2}.
\end{equation}

\paragraph{Estimate of the arithmetic part}

Our next goal is to give a bound on (\ref{arithmeticpart}), which is uniform in $v',p'$. Fix $v',p'$ and consider
\begin{equation}\label{squareofarithmeticpart}
\sum_{\varpi\in\mathcal{C}_{1}(\frak{c},\varepsilon)} \sum_{\frak{t}|\frak{cc}_{\varpi}^{-1}} \left|\sum_{q\in\frak{qn}\cap\mathcal{B}(\xi)} \frac{\lambda_{\varpi}^{\frak{t}}(q\frak{n}^{-1})}{ \sqrt{\norm{q\frak{n}^{-1}}}} \prod_{j=1}^{r+s}|q_j|^{-v'_j} \prod_{j=r+1}^{r+s}\left(\frac{q_j}{|q_j|}\right)^{-p'_j}\right|^2.
\end{equation}

Following \cite[p.45]{BlomerHarcos}, introduce, for any ideal $\frak{a}\subseteq\frak{o}$,
\begin{equation*}
f(\frak{a};v',p')= \sum_{\substack{q\in\mathcal{B}(\xi)\\ (q)=\frak{an}}} \prod_{j=1}^{r+s}|q_j|^{-v'_j} \prod_{j=r+1}^{r+s}\left(\frac{q_j}{|q_j|}\right)^{-p'_j}.
\end{equation*}
The number of possible units $\epsilon$ for which $q\epsilon\in\mathcal{B}$ is $\ll_{F,\varepsilon}\norm{\frak{q}}^{\varepsilon}$ (recall the argument in Section \ref{subsec:chap:burgesssubconvexity_offdiagonalandisenstein_eisenstein}), hence
\begin{equation}\label{qsumidealcollection}
|f(\frak{a};v',p')|\ll_{F,\varepsilon}\norm{\frak{q}}^{\varepsilon}.
\end{equation}
With this notation, we can rewrite the innermost sum in (\ref{squareofarithmeticpart}) as
\begin{equation*}
\sum_{q\in\frak{qn}\cap\mathcal{B}(\xi)} \frac{\lambda_{\varpi}^{\frak{t}}(q\frak{n}^{-1})}{ \sqrt{\norm{q\frak{n}^{-1}}}} \prod_{j=1}^{r+s}|q_j|^{-v'_j} \prod_{j=r+1}^{r+s}\left(\frac{q_j}{|q_j|}\right)^{-p'_j}= \sum_{\norm{\frak{m}}\ll LY/\norm{\frak{qn}}} \frac{\lambda_{\varpi}^{\frak{t}}(\frak{mq})}{ \sqrt{\norm{\frak{mq}}}} f(\frak{mq};v',p'),
\end{equation*}
where $\ll$ in the sum means that we may choose a constant depending only on $F$ such that this holds. Now on the right-hand side, for each occuring $\frak{m}$, transfer each prime factor dividing both $\frak{m}$ and $\frak{q}$ from $\frak{m}$ to $\frak{q}$. This does not affect the summand (since it depends only on the product $\frak{mq}$) and lets us write
\begin{equation}\label{qsumcoprimeform}
\sum_{\norm{\frak{m}}\ll LY/\norm{\frak{qn}}} \frac{\lambda_{\varpi}^{\frak{t}}(\frak{mq})}{\sqrt{\norm{\frak{mq}}}} f(\frak{mq};v',p') = \sum_{\frak{q}|\frak{q}'|\frak{q}^{\infty}}\sum_{\substack {\norm{\frak{m}}\ll LY/\norm{\frak{q}'\frak{n}}\\ \gcd(\frak{m},\frak{q})=\frak{o}}} \frac{\lambda_{\varpi}^{\frak{t}}(\frak{mq}')}{\sqrt{\norm{\frak{mq}'}}} f(\frak{mq}';v',p').
\end{equation}

For coprime ideals $\frak{m}$ and $\frak{q}'$, $\lambda_{\varpi}^{\frak{t}}(\frak{mq}')$ can be expressed as \begin{equation*}
\lambda_{\varpi}^{\frak{t}}(\frak{mq}')= \sum_{\frak{b}|\gcd(\frak{q}'\gcd(\frak{q}',\frak{t})^{-1}, \gcd(\frak{q}',\frak{t}))} \mu(\frak{b}) \lambda_{\varpi}(\frak{q}'\gcd(\frak{q}',\frak{t})^{-1}\frak{b}^{-1}) \lambda_{\varpi}^{\frak{t}}(\frak{m}\gcd(\frak{q}',\frak{t})\frak{b}^{-1}),
\end{equation*}
see \cite[Lemma 6.2]{MPphd} (which is based on \cite[pp.73-74]{BlomerHarcosMichel}).

Using this in (\ref{qsumcoprimeform}), by (\ref{towardsramanujanpetersson}), we see
\begin{equation*}
\lambda_{\varpi}(\frak{q}'\gcd(\frak{q}',\frak{t})^{-1}\frak{b}^{-1}) \ll \norm{\frak{q}'}^{\theta+\varepsilon}.
\end{equation*}
We claim $\gcd(\frak{q}',\frak{t})|\frak{c}_{\pi}$. Indeed, $\frak{t}|\frak{cc}_{\varpi}^{-1}$ with $\frak{c}=\frak{c}_{\pi}\mathrm{lcm}((l_1),(l_2))$, where $l_1,l_2$ are primes not dividing $\frak{q}$. Altogether, the $q$-sum in (\ref{squareofarithmeticpart}) can be estimated as
\begin{equation}\label{qsumestimate}
\ll \sum_{\frak{q}|\frak{q}'|\frak{q}^{\infty}} \norm{\frak{q}'}^{-1/2+\theta+\varepsilon} \sum_{\frak{b}|\frak{c}_{\pi}} \left| \sum_{\substack {\norm{\frak{m}}\ll LY/\norm{\frak{q}'\frak{n}}\\ \gcd(\frak{m},\frak{q})=\frak{o}}} \frac{\lambda_{\varpi}^{\frak{t}}(\frak{mb})}{\sqrt{\norm{\frak{m}}}} f(\frak{mq}';v',p') \right|.
\end{equation}

Similarly as in Lemma \ref{densityofcuspidalspectrumgeneral}, take the function $h$ defined in (\ref{kuznetsovtestfunctionreal}), (\ref{kuznetsovtestfunctioncomplex}) with $a_j=\norm{\frak{q}}^{2\varepsilon}$ at real, $a_j=\norm{\frak{q}}^{\varepsilon}$ at complex places, $b_j=\sqrt{a_j}$ at all archimedean places, finally $a_j'=-1$ at complex places. This has the property that it gives weight $\gg 1$ to representations in $\mathcal{C}_{1}(\frak{c},\varepsilon)$.
\begin{equation*}
\begin{split}
\sum_{\varpi\in\mathcal{C}_{1}(\frak{c},\varepsilon)} \sum_{\frak{t}|\frak{cc}_{\varpi}^{-1}} & \left|\sum_{q\in\frak{qn}\cap\mathcal{B}(\xi)} \frac{\lambda_{\varpi}^{\frak{t}}(q\frak{n}^{-1})}{ \sqrt{\norm{q\frak{n}^{-1}}}} \prod_{j=1}^{r+s}|q_j|^{-v'_j} \prod_{j=r+1}^{r+s}\left(\frac{q_j}{|q_j|}\right)^{-p'_j}\right|^2 \\ & \ll \sum_{\varpi\in\mathcal{C}_{1}(\frak{c},\varepsilon)} \sum_{\frak{t}|\frak{cc}_{\varpi}^{-1}} h(\rr_{\varpi})\left|\sum_{q\in\frak{qn}\cap\mathcal{B}(\xi)} \frac{\lambda_{\varpi}^{\frak{t}}(q\frak{n}^{-1})}{ \sqrt{\norm{q\frak{n}^{-1}}}} \prod_{j=1}^{r+s}|q_j|^{-v'_j} \prod_{j=r+1}^{r+s}\left(\frac{q_j}{|q_j|}\right)^{-p'_j}\right|^2.
\end{split}
\end{equation*}
In the summation over $\varpi$, multiply by a factor $C_{\varpi}^{-1}$, which is $\gg \norm{\frak{q}}^{-\varepsilon}$ by
(\ref{proportionalityconstantisessentiallylfunctionresidue}) and Proposition \ref{residueofrankinselbergsquarelfuncionisaboutone}. We also add the analogous nonnegative contribution of the Eisenstein spectrum.

Therefore, using (\ref{qsumidealcollection}), (\ref{qsumestimate}) estimates the $\varpi$-sum of (\ref{squareofarithmeticpart}) as
\begin{equation}\label{squareofarithmeticpartbeforekuznetsov}
\begin{split}
\sum_{\varpi\in\mathcal{C}_{1}(\frak{c},\varepsilon)} \sum_{\frak{t}|\frak{cc}_{\varpi}^{-1}} & \left|\sum_{q\in\frak{qn}\cap\mathcal{B}(\xi)} \frac{\lambda_{\varpi}^{\frak{t}}(q\frak{n}^{-1})}{ \sqrt{\norm{q\frak{n}^{-1}}}} \prod_{j=1}^{r+s}|q_j|^{-v'_j} \prod_{j=r+1}^{r+s}\left(\frac{q_j}{|q_j|}\right)^{-p'_j}\right|^2 \\ & \ll \norm{\frak{q}}^{-1+2\theta+\varepsilon} \max_{\frak{b}_1,\frak{b}_2|\frak{c}_{\pi}} \sum_{\norm{\frak{m}_1},\norm{\frak{m}_2}\ll LY/\norm{\frak{q}}} \frac{1}{\sqrt{\norm{\frak{m}_1\frak{m}_2}}} \\ & \qquad \qquad \qquad \qquad \left|\sum_{\varpi\in\mathcal{C}_{1}(\frak{c})} C_{\varpi}^{-1} \sum_{\frak{t}|\frak{cc}_{\varpi}^{-1}} h(\rr_{\varpi}) \lambda_{\varpi}^{\frak{t}}(\frak{m}_1\frak{b}_1) \overline{\lambda_{\varpi}^{\frak{t}}(\frak{m}_2\frak{b}_2)}+CSC \right|.
\end{split}
\end{equation}

We apply the Kuznetsov formula (\ref{kuznetsovformula}) to estimate the last line of (\ref{squareofarithmeticpartbeforekuznetsov}), with $\alpha=\alpha'=1$, $\frak{a}^{-1}=\frak{m}_1\frak{b}_1$, $\frak{a}'^{-1}=\frak{m}_2\frak{b}_2$. The delta term is, up to a constant multiple,
\begin{equation*}
[K(\frak{o}):K(\frak{c})]\Delta(\frak{m}_1\frak{b}_1,\frak{m}_2\frak{b}_2) \int h(\rr_{\varpi}) d\mu.
\end{equation*}
Here, by (\ref{estimateoftransformsongeometricside}), the integral of $h$ gives $\ll \norm{\frak{q}}^{2(r+s)\varepsilon}$, and also $[K(\frak{o}):K(\frak{c})]\ll L^2 \norm{\frak{q}}^{\varepsilon}$ by (\ref{indexofcongruencesubgroup}) and (\ref{magnitudeofl}). When $\Delta(\frak{m}_1\frak{b}_1,\frak{m}_2\frak{b}_2)\neq 0$, $\norm{\frak{m}_1}\asymp_{F,\pi}\norm{\frak{m}_2}$, so the sum over $\frak{m}_1,\frak{m}_2$ can be replaced by a sum over $\frak{m}$.
Using (\ref{magnitudeofy}), we see that $LY/\norm{\frak{q}}\ll \norm{\frak{q}}^{\varepsilon}L$, and taking into account also (\ref{magnitudeofl}), we obtain that
\begin{equation*}
\sum_{\norm{\frak{m}}\ll LY/\norm{\frak{q}}} \frac{1}{\norm{\frak{m}}}\ll \norm{\frak{q}}^{\varepsilon}.
\end{equation*}
Altogether, the delta term of the geometric side of the Kuznetsov formula (\ref{kuznetsovformula}) contributes
\begin{equation}\label{deltacontributiontoarithmeticpart}
\ll \norm{\frak{q}}^{-1+2\theta+\varepsilon}L^2
\end{equation}
to the right-hand side of (\ref{squareofarithmeticpartbeforekuznetsov}).

As for the Kloosterman term, similarly to (\ref{densityofcuspidalspectrumgeneralkloostermanterm}), we have to estimate
\begin{equation*}
\begin{split}
\max_{\frak{a}\in C}\sum_{\epsilon\in\rfact{\frak{o}_+^{\times}}{{\frak{o}^{2\times}}}}\sum_{0\neq c\in \frak{m}_1^{-1}\frak{b}_1^{-1}\frak{ac}} & \frac{\norm{(\gcd(\frak{m}_1\frak{b}_1,\frak{m}_2\frak{b}_2, c\frak{m}_1\frak{b}_1\frak{a}^{-1}))}^{1/2}}{ \norm{c\frak{m}_1\frak{b}_1\frak{a}^{-1}}^{1/2-\varepsilon}} \\ & \cdot \prod_{j\leq r} \min(1,|\epsilon_j\gamma_{\frak{a},j}/c_j|^{1/2}) \prod_{j>r} \min(1,|\epsilon_j\gamma_{\frak{a},j}/c_j|),
\end{split}
\end{equation*}
where $\gamma_{\frak{a}}$ is a totally positive generator of the ideal $\frak{a}^{2}(\frak{m}_1\frak{b}_1)^{-1}\frak{m}_2\frak{b}_2$, $C$ is a fixed set of narrow class representatives (depending only on $F$ and the narrow class of $(\frak{m}_1\frak{b}_1)^{-1}\frak{m}_2\frak{b}_2$) with the property that such a $\gamma_{\frak{a}}$ exists for each $\frak{a}\in C$. The sum over $\epsilon\in\rfact{\frak{o}_+^{\times}}{{\frak{o}^{2\times}}}$ is negligible. Now take a totally positive $\beta\in\frak{o}$ such that $(\beta)\supseteq\frak{m}_1\frak{b}_1$, $\norm{(\beta)}\gg \norm{\frak{m}_1\frak{b}_1}$, and then the above is
\begin{equation*}
\begin{split}
\ll \max_{\frak{a}\in C} \sum_{0\neq c\in \frak{ac}} & \frac{\norm{(\gcd(\frak{m}_1\frak{b}_1,\frak{m}_2\frak{b}_2, c\frak{a}^{-1}))}^{1/2}}{ \norm{c\frak{a}^{-1}}^{1/2-\varepsilon}} \\ & \cdot \prod_{j\leq r} \min(1,|\gamma_{\frak{a},j}\beta_j|^{1/4}/|c_j|^{1/2}) \prod_{j>r} \min(1,|\gamma_{\frak{a},j}\beta_j|^{1/2}/|c_j|),
\end{split}
\end{equation*}
Then the same method as in the proof of Lemma \ref{densityofcuspidalspectrumgeneral} shows that the previous display can be estimated as
\begin{equation*}
\ll \norm{(\gamma_{\frak{a}}\beta)}^{1/4+\varepsilon} \norm{\gcd(\frak{m}_1,\frak{m}_2,\frak{c})}^{1/2+\varepsilon} \norm{\frak{c}}^{-1-\varepsilon}.
\end{equation*}
The last factor $\norm{\frak{c}}^{-1-\varepsilon}$ cancels $[K(\frak{o}):K(\frak{c})]$. Noting that $\norm{(\gamma_{\frak{a}}\beta)}\ll \norm{\frak{m}_1\frak{m}_2}$, we see that the Kloosterman term contributes
\begin{equation*}
\ll \norm{\frak{q}}^{-1+2\theta+\varepsilon} \sum_{\norm{\frak{m}_1},\norm{\frak{m}_2}\ll LY/\norm{\frak{q}}} \norm{\gcd(\frak{m}_1,\frak{m}_2,\frak{c})}^{1/2+\varepsilon} \norm{\frak{m}_1\frak{m}_2}^{-1/4+\varepsilon}
\end{equation*}
to the right-hand side of (\ref{squareofarithmeticpartbeforekuznetsov}). Obviously
\begin{equation*}
\norm{\gcd(\frak{m}_1,\frak{m}_2,\frak{c})}^{1/2} \leq \norm{\gcd(\frak{m}_1,\frak{c})}^{1/4} \norm{\gcd(\frak{m}_2,\frak{c})}^{1/4},
\end{equation*}
so the above display is (using also (\ref{magnitudeofy}) and (\ref{magnitudeofl}) again)
\begin{equation*}
\ll \norm{\frak{q}}^{-1+2\theta+2\varepsilon} \left(\sum_{\norm{\frak{m}}\ll LY/\norm{\frak{q}}} \left(\frac{\norm{\gcd(\frak{m},\frak{c})}}{\norm{\frak{m}}} \right)^{1/4} \right)^2.
\end{equation*}
Here, if $\frak{m}$ is divisible by $l_1$ or $l_2$, then $\norm{\gcd(\frak{m},\frak{c})}\ll L$ (by (\ref{magnitudeofl}), an ideal of norm $\ll\norm{\frak{q}}^{\varepsilon}L$ cannot have two different prime divisors $l_1,l_2$), this happens at most for $\norm{\frak{q}}^{\varepsilon}$ many $\frak{m}$'s. For other $\frak{m}$'s, $\norm{\gcd(\frak{m},\frak{c})}\ll 1$. Therefore, the Kloosterman contribution to (\ref{squareofarithmeticpartbeforekuznetsov}) is
\begin{equation}\label{kloostermancontributiontoairthmeticpart}
\ll \norm{\frak{q}}^{-1+2\theta+\varepsilon}L^{3/2}.
\end{equation}

Taking square-roots, we obtain from (\ref{deltacontributiontoarithmeticpart}) and (\ref{kloostermancontributiontoairthmeticpart}) that the arithmetic part (\ref{arithmeticpart}) is
\begin{equation}\label{estimateofarithmeticpart}
\ll \norm{\frak{q}}^{-1/2+\theta+\varepsilon}L.
\end{equation}

\paragraph{Summing up in the cuspidal spectrum}

Inside $\mathcal{C}_{1}(\frak{c},\varepsilon)$, (\ref{cuspidalspectrumbeforesplittingmellinandarithmetic}), (\ref{estimateofmellinpart}) and (\ref{estimateofarithmeticpart}) show that the contribution (\ref{cuspidalessentialcontribution}) is
\begin{equation*}
\begin{split}
&\ll \sum_{p'\in\ZZ^s}\int_{(i\RR)^{r+s}} \norm{v,p}^{4\alpha_2} \norm{v',p'}^{-3/2} \norm{\frak{q}}^{-1/2+\theta+\varepsilon}L |dv'_j| \\ & \ll \norm{\frak{q}}^{-1/2+\theta+\varepsilon}L \norm{v,p}^{4\alpha_2},
\end{split}
\end{equation*}
and this bound holds (with the implicit constant multiplied by $2^r$) without restricting the summation in (\ref{cuspidalspectrumbeforesplittingmellinandarithmetic}) to a specific sign $\xi$.

Now we concentrate on representations outside $\mathcal{C}_{1}(\frak{c},\varepsilon)$. First of all, from Lemma \ref{densityofcuspidalspectrumgeneral}, we see that
\begin{equation*}
\lambda_{\varpi}^{\frak{t}}(q\frak{n}^{-1})\ll L^{1/2+\varepsilon} \norm{(q)}^{1/4+\varepsilon} \norm{\rr_{\varpi}},
\end{equation*}
therefore, with a large $c'$ (depending on $\varepsilon$), we may write (using (\ref{magnitudeofl})), outside $\mathcal{C}_{1}(\frak{c},\varepsilon)$,
\begin{equation*}
\qquad \frac{\lambda_{\varpi}^{\frak{t}}(q\frak{n}^{-1})}{ \sqrt{\norm{q\frak{n}^{-1}}}}\ll L^{1/2+\varepsilon}\norm{\frak{q}}^{-1/4}\norm{\rr_{\varpi}}\ll \norm{\rr_{\varpi}}^{c'}.
\end{equation*}
Now by Cauchy-Schwarz, outside $\mathcal{C}_{1}(\frak{c},\varepsilon)$, the cuspidal contribution is, with some $c$ much larger than $c'$,
\begin{equation*}
\begin{split}
\ll
& \left(\sum_{0\neq q\in\frak{qn}\cap \mathcal{B}} \sum_{\varpi\notin\mathcal{C}_{1}(\frak{c},\varepsilon) } \sum_{\frak{t}|\frak{cc}_{\varpi}^{-1}} \norm{\rr_{\varpi}}^{2(c'-c)}\right)^{1/2} \\ & \cdot
\left(\sum_{0\neq q\in\frak{qn}\cap \mathcal{B}} \sum_{\varpi\notin\mathcal{C}_{1}(\frak{c},\varepsilon)} \sum_{\frak{t}|\frak{cc}_{\varpi}^{-1}} \left|\norm{\rr_{\varpi}}^c W_{\varpi,\frak{t}} \left(\frac{q}{(LY)^{1/(r+2s)}};v,p\right)\right|^{2}\right)^{1/2}.
\end{split}
\end{equation*}
The first factor is $\ll_k \norm{\frak{q}}^{-k}$ for any $k\in\NN$, if $c-c'$ is large enough, as it follows from Corollary \ref{densityofcuspidalspectrum2}. As for the second factor, apply Theorem \ref{spectraldecompositionofshiftedconvolutionsum}, Part \ref{spectraldecompositionofshiftedconvolutionsumpart2} with $a=0$, $b=0$ and the above $c$. The number of $q$'s in $\frak{q}\frak{n}\cap\mathcal{B}$ is $O_F(LY)$. Then together with (\ref{changefromsobolevtoparameternorm}), we see that the second factor is $\ll \norm{\frak{q}}^{-1+\varepsilon}\norm{v,p}^{4\alpha_3}$ with some positive integer $\alpha_3$ depending only on $F$. To match $Y$ and $L$ with $\norm{\frak{q}}$, we use (\ref{magnitudeofy}) and (\ref{magnitudeofl}) throughout.

Altogether, the cuspidal spectrum has contribution
\begin{equation}\label{cuspidalspectrumcontribution}
\ll \norm{v,p}^{4\max(\alpha_2,\alpha_3)} \norm{\frak{q}}^{-1/2+\theta+\varepsilon}L.
\end{equation}

\subsection{Choice of the amplification length}

Set $\alpha=\max(\alpha_1,\alpha_2,\alpha_3)$. Summing trivially over $l_1,l_2$, and using (\ref{offdiagonalcontribution2}), (\ref{offdiagonalcontributiondecomposed}), (\ref{eisensteinspectrumcontribution}) and (\ref{cuspidalspectrumcontribution}), we see
\begin{equation*}
ODC\ll \norm{v,p}^{4\alpha} \norm{\frak{q}}^{1/2+\theta+\varepsilon}L.
\end{equation*}
This estimate, together with (\ref{diagonalcontributionestimate}) and through (\ref{amplifiedsecondmoment}), gives rise to
\begin{equation*}
\begin{split}
& |\LL_{\chi_{\fin}}(v,p)|^2\ll \norm{v,p}^{4\alpha} (\norm{\frak{q}}^{1+\varepsilon}L^{-1}+ \norm{\frak{q}}^{1/2+\theta+\varepsilon}L), \\ &|\LL_{\chi_{\fin}}(v,p)|\ll \norm{v,p}^{2\alpha} (\norm{\frak{q}}^{1/2+\varepsilon}L^{-1/2}+ \norm{\frak{q}}^{1/4+\theta/2+\varepsilon}L^{1/2}).
\end{split}
\end{equation*}
We see that the optimal choice is $L=\norm{\frak{q}}^{1/4-\theta/2}$, which meets the condition (\ref{magnitudeofl}). With this, we obtain the bound
\begin{equation}\label{estimateofbigl}
|\LL_{\chi_{\fin}}(v,p)|\ll \norm{v,p}^{2\alpha} \norm{\frak{q}}^{3/8+\theta/4+\varepsilon}.
\end{equation}

\subsection{Proof of Theorem \ref{subconvextheorem}}

In the derivation of the subconvex bound on $L(1/2,\pi\otimes\chi)$, the starting point is \cite[(75)]{BlomerHarcos}, a consequence of the approximate functional equation \cite[Theorem 2.1]{Harcos}: there is a constant $c=c(F,\pi,\chi_{\infty},\varepsilon)>0$ and a smooth function $V:(0,\infty)\rightarrow\CC$ supported on $[1/2,2]$, satisfying $V^{(j)}(y)\ll_{F,\pi,\chi_{\infty},j} 1$ for each nonnegative integer $j$, such that
\begin{equation}\label{approximatefunctionalequation}
L(1/2,\pi\otimes\chi)\ll_{F,\pi,\chi_{\infty},\varepsilon} \norm{\frak{q}}^{\varepsilon} \max_{Y\leq c\norm{\frak{q}}^{1+\varepsilon}} \left| \sum_{0\neq\frak{m}\subseteq\frak{o}} \frac{\lambda_{\pi}(\frak{m})\chi(\frak{m})}{\sqrt{\norm{\frak{m}}}} V\left(\frac{\norm{\frak{m}}}{Y}\right) \right|.
\end{equation}

\begin{proof}[Proof of Theorem \ref{subconvextheorem}] We start out from (\ref{approximatefunctionalequation}) as follows. First of all, we split up the sum over ideals according to their narrow class (with representatives $\frak{n}$ satisfying (\ref{magnitudeoffrakn})). Then
\begin{equation*}
L(1/2,\pi\otimes\chi)\ll_{F,\pi,\chi_{\infty},\varepsilon} \norm{\frak{q}}^{\varepsilon} \max_{Y\leq c\norm{\frak{q}}^{1+\varepsilon}} \left| \sum_{0<<t\in\frak{n}\ppmod{\frak{o}_+^{\times}}} \frac{\lambda_{\pi}(t\frak{n}^{-1})\chi(t\frak{n}^{-1})}{ \sqrt{\norm{t\frak{n}^{-1}}}} V\left(\frac{|t|_{\infty}}{Y}\right) \right|
\end{equation*}
for some $c=c(F,\pi,\chi_{\infty},\varepsilon)$, hence (\ref{magnitudeofy}) is satisfied. Here, by the partition of unity introduced in the beginning of this section, the sum on the right-hand side can be rewritten as
\begin{equation*}
\sum_{0<<t\in\frak{n}} \frac{\lambda_{\pi}(t\frak{n}^{-1})\chi(t\frak{n}^{-1})}{ \sqrt{\norm{t\frak{n}^{-1}}}} G(t_{\infty})V\left(\frac{|t|_{\infty}}{Y}\right)W\left( \frac{t_{\infty}}{Y^{1/(r+2s)}}\right),
\end{equation*}
where $W$ is a smooth nonnegative function which is $1$ on $[[c_1,c_2]]$ and supported on $[[c_3,c_4]]$. Now introducing the Mellin transform
\begin{equation*}
\hat{V}(v,p)=\int_{F_{\infty,+}^{\times}} G(y)V(y)\chi_{\infty}(y) \prod_{j=1}^{r+s}|y_j|^{v_j} \prod_{j=r+1}^{r+s} \left(\frac{y_j}{|y_j|}\right)^{p_j} d^{\times}y,
\end{equation*}
we have, by Mellin inversion, that the above display is
\begin{equation*}
\ll_{F} \sum_{p\in\ZZ^s} \int_{v\in(i\RR)^{r+s}} \hat{V}(v,p) \sum_{0<<t\in\frak{n}} \frac{\lambda_{\pi}(t\frak{n}^{-1}) \chi_{\fin}(t)}{\sqrt{\norm{t\frak{n}^{-1}}}} W_{v,p}\left(\frac{t}{Y^{1/(r+2s)}}\right)dv,
\end{equation*}
where
\begin{equation*}
W_{v,p}(y)=W(y)\prod_{j=1}^{r+s}|y_j|^{-v_j} \prod_{j=r+1}^{r+s} \left(\frac{y_j}{|y_j|}\right)^{-p_j} d^{\times}y.
\end{equation*}
Since $F(y)$, $V(y)$, $W(y)$ are all smooth and compactly supported, we see that
\begin{equation*}
\hat{V}(v,p)\ll_{F,\pi,\chi_{\infty},\varepsilon,\beta} \norm{v,p}^{-\beta}
\end{equation*}
for all $\beta\in\NN$ and also that the family of $W_{v,p}$'s satisfies \ref{cond1} and \ref{cond2}. Then
\begin{equation*}
L(1/2,\pi\otimes\chi)\ll_{F,\pi,\chi_{\infty},\varepsilon,\beta} \sum_{p\in\ZZ^s}\int_{v\in(i\RR)^{r+s}}\LL_{\chi_{\fin}}(v,p) \norm{v,p}^{-\beta} dv
\end{equation*}
with $\LL$ of (\ref{definitionofbigl}) satisfying all conditions we needed in its estimate. Now taking a $\beta$ which is much larger than $2\alpha$, (\ref{estimateofbigl}) completes the proof.
\end{proof}
\bibliography{magap_shifted_convolution_sums_and_burgess_type_subconvexity_over_number_fields}

\begin{thebibliography}{Mag13b}

\bibitem[BB11]{BlomerBrumley}
V.~Blomer and F.~Brumley.
\newblock On the {R}amanujan conjecture over number fields.
\newblock {\em Ann. of Math. (2)}, 174(1):581--605, 2011.

\bibitem[BH08]{BlomerHarcosDuke}
V.~Blomer and G.~Harcos.
\newblock The spectral decomposition of shifted convolution sums.
\newblock {\em Duke Math. J.}, 144(2):321--339, 2008.

\bibitem[BH10]{BlomerHarcos}
V.~Blomer and G.~Harcos.
\newblock Twisted {$L$}-functions over number fields and {H}ilbert's eleventh
  problem.
\newblock {\em Geom. Funct. Anal.}, 20(1):1--52, 2010.
\newblock Erratum: http://www.renyi.hu/$\tilde{\ }$gharcos/hilbert$\underline{\
  }$erratum.pdf.

\bibitem[BHM07]{BlomerHarcosMichel}
V.~Blomer, G.~Harcos, and P.~Michel.
\newblock A {B}urgess-like subconvex bound for twisted {$L$}-functions.
\newblock {\em Forum Math.}, 19(1):61--105, 2007.
\newblock Appendix 2 by Z. Mao.

\bibitem[BM98]{BruggemanMiatello}
R.~W. Bruggeman and R.~J. Miatello.
\newblock Sum formula for $\mathrm{SL}_2$ over a number field and {S}elberg
  type estimate for exceptional eigenvalues.
\newblock {\em Geom. Funct. Anal.}, 8(4):627--655, 1998.

\bibitem[BM02]{BruggemanMotohashi13}
R.~W. Bruggeman and Y.~Motohashi.
\newblock A note on the mean value of the zeta and {$L$}-functions. {XIII}.
\newblock {\em Proc. Japan Acad. Ser. A Math. Sci.}, 78(6):87--91, 2002.

\bibitem[BM03]{BruggemanMotohashi}
R.~W. Bruggeman and Y.~Motohashi.
\newblock Sum formula for {K}loosterman sums and fourth moment of the
  {D}edekind zeta-function over the {G}aussian number field.
\newblock {\em Funct. Approx. Comment. Math.}, 31:23--92, 2003.

\bibitem[BM05]{BruggemanMotohashinew}
R.~W. Bruggeman and Y.~Motohashi.
\newblock A new approach to the spectral theory of the fourth moment of the
  {R}iemann zeta-function.
\newblock {\em J. Reine Angew. Math.}, 579:75--114, 2005.

\bibitem[BMP01]{BruggemanMiatelloPacharoni}
R.~W. Bruggeman, R.~J. Miatello, and I.~Pacharoni.
\newblock Estimates for {K}loosterman sums for totally real number fields.
\newblock {\em J. Reine Angew. Math.}, 535:103--164, 2001.

\bibitem[BR99]{BernsteinReznikov}
J.~Bernstein and A.~Reznikov.
\newblock Analytic continuation of representations and estimates of automorphic
  forms.
\newblock {\em Ann. of Math. (2)}, 150(1):329--352, 1999.

\bibitem[Bum97]{Bump}
D.~Bump.
\newblock {\em Automorphic forms and representations}, volume~55 of {\em
  Cambridge Studies in Advanced Mathematics}.
\newblock Cambridge University Press, Cambridge, 1997.

\bibitem[Bum04]{BumpLie}
D.~Bump.
\newblock {\em Lie groups}, volume 225 of {\em Graduate Texts in Mathematics}.
\newblock Springer-Verlag, New York, 2004.

\bibitem[Bur63]{Burgess}
D.~A. Burgess.
\newblock On character sums and {$L$}-series. {II}.
\newblock {\em Proc. London Math. Soc. (3)}, 13:524--536, 1963.

\bibitem[Byk96]{Bykovskii}
V.~A. Bykovski{\u\i}.
\newblock A trace formula for the scalar product of {H}ecke series and its
  applications.
\newblock {\em Zap. Nauchn. Sem. S.-Peterburg. Otdel. Mat. Inst. Steklov.
  (POMI)}, 226(Anal. Teor. Chisel i Teor. Funktsii. 13):14--36, 235--236, 1996.

\bibitem[Cas73]{Casselman}
W.~Casselman.
\newblock On some results of {A}tkin and {L}ehner.
\newblock {\em Math. Ann.}, 201:301--314, 1973.

\bibitem[CI00]{ConreyIwaniec}
J.~B. Conrey and H.~Iwaniec.
\newblock The cubic moment of central values of automorphic {$L$}-functions.
\newblock {\em Ann. of Math. (2)}, 151(3):1175--1216, 2000.

\bibitem[Coh05]{Cohen}
P.~B. Cohen.
\newblock Hyperbolic equidistribution problems on {S}iegel 3-folds and
  {H}ilbert modular varieties.
\newblock {\em Duke Math. J.}, 129(1):87--127, 2005.

\bibitem[CPS90]{CogdellPiatetskiShapiro}
J.~W. Cogdell and I.~Piatetski-Shapiro.
\newblock {\em The arithmetic and spectral analysis of {P}oincar\'e series},
  volume~13 of {\em Perspectives in Mathematics}.
\newblock Academic Press Inc., Boston, MA, 1990.

\bibitem[CPSS]{CogdellPiatetskiShapiroSarnak}
J.~W. Cogdell, I.~Piatetski-Shapiro, and P.~Sarnak.
\newblock Estimates on the critical line for {H}ilbert modular {$L$}-functions
  and applications.
\newblock preprint, 2001.

\bibitem[DFI93]{DukeFriedlanderIwaniecbounds1}
W.~Duke, J.~Friedlander, and H.~Iwaniec.
\newblock Bounds for automorphic {$L$}-functions.
\newblock {\em Invent. Math.}, 112(1):1--8, 1993.

\bibitem[DFI02]{DukeFriedlanderIwaniec}
W.~Duke, J.~B. Friedlander, and H.~Iwaniec.
\newblock The subconvexity problem for {A}rtin {$L$}-functions.
\newblock {\em Invent. Math.}, 149(3):489--577, 2002.

\bibitem[DM78]{DixmierMalliavin}
J.~Dixmier and P.~Malliavin.
\newblock Factorisations de fonctions et de vecteurs ind\'efiniment
  diff\'erentiables.
\newblock {\em Bull. Sci. Math. (2)}, 102(4):307--330, 1978.

\bibitem[DSP90]{DukeSchulzePillot}
W.~Duke and R.~Schulze-Pillot.
\newblock Representation of integers by positive ternary quadratic forms and
  equidistribution of lattice points on ellipsoids.
\newblock {\em Invent. Math.}, 99(1):49--57, 1990.

\bibitem[Duk88]{Duke}
W.~Duke.
\newblock Hyperbolic distribution problems and half-integral weight {M}aass
  forms.
\newblock {\em Invent. Math.}, 92(1):73--90, 1988.

\bibitem[Est31]{Estermann}
T.~Estermann.
\newblock \"{U}ber die {D}arstellungen einer {Z}ahl als {D}ifferenz von zwei
  {P}rodukten.
\newblock {\em J. Reine Angew. Math.}, 164:173--182, 1931.

\bibitem[Fla79]{Flath}
D.~Flath.
\newblock Decomposition of representations into tensor products.
\newblock In {\em Automorphic forms, representations and {$L$}-functions
  ({P}roc. {S}ympos. {P}ure {M}ath., {O}regon {S}tate {U}niv., {C}orvallis,
  {O}re., 1977), {P}art 1}, Proc. Sympos. Pure Math., XXXIII, pages 179--183.
  Amer. Math. Soc., Providence, R.I., 1979.

\bibitem[Fre90]{Freitag}
E.~Freitag.
\newblock {\em Hilbert modular forms}.
\newblock Springer-Verlag, Berlin, 1990.

\bibitem[GJ78]{GelbartJacquetgl3}
S.~Gelbart and H.~Jacquet.
\newblock A relation between automorphic representations of {$\mathrm{GL}(2)$}
  and {$\mathrm{GL}(3)$}.
\newblock {\em Ann. Sci. \'Ecole Norm. Sup. (4)}, 11(4):471--542, 1978.

\bibitem[GJ79]{GelbartJacquet}
S.~Gelbart and H.~Jacquet.
\newblock Forms of $\mathrm{GL}(2)$ from the analytic point of view.
\newblock In {\em Automorphic forms, representations and $L$-functions ({P}roc.
  {S}ympos. {P}ure {M}ath., {O}regon {S}tate {U}niv., {C}orvallis, {O}re.,
  1977), {P}art 1}, Proc. Sympos. Pure Math., XXXIII, pages 213--251. Amer.
  Math. Soc., Providence, R.I., 1979.

\bibitem[Goo81a]{GoodJNT}
A.~Good.
\newblock Beitr\"age zur {T}heorie der {D}irichletreihen, die {S}pitzenformen
  zugeordnet sind.
\newblock {\em J. Number Theory}, 13(1):18--65, 1981.

\bibitem[Goo81b]{GoodMathAnn}
A.~Good.
\newblock Cusp forms and eigenfunctions of the {L}aplacian.
\newblock {\em Math. Ann.}, 255(4):523--548, 1981.

\bibitem[GR07]{GR}
I.~S. Gradshteyn and I.~M. Ryzhik.
\newblock {\em Table of integrals, series, and products}.
\newblock Elsevier/Academic Press, Amsterdam, seventh edition, 2007.
\newblock Translated from the Russian, Translation edited and with a preface by
  A. Jeffrey and D. Zwillinger, With one CD-ROM (Windows, Macintosh and UNIX).

\bibitem[Har02]{Harcos}
G.~Harcos.
\newblock Uniform approximate functional equation for principal
  {$L$}-functions.
\newblock {\em Int. Math. Res. Not.}, (18):923--932, 2002.
\newblock Erratum: Int. Math. Res. Not. 2004, no. 13, 659--660.

\bibitem[HB79]{Heath-Brown}
D.~R. Heath-Brown.
\newblock The fourth power moment of the {R}iemann zeta function.
\newblock {\em Proc. London Math. Soc. (3)}, 38(3):385--422, 1979.

\bibitem[HL94]{HoffsteinLockhart}
J.~Hoffstein and P.~Lockhart.
\newblock Coefficients of {M}aass forms and the {S}iegel zero.
\newblock {\em Ann. of Math. (2)}, 140(1):161--181, 1994.
\newblock With an appendix by D. Goldfeld, J. Hoffstein and D. Lieman.

\bibitem[HR95]{HoffsteinRamakrishnan}
J.~Hoffstein and D.~Ramakrishnan.
\newblock Siegel zeros and cusp forms.
\newblock {\em Int. Math. Res. Not.}, (6):279--308, 1995.

\bibitem[Ing26]{Inghamriemannzeta}
A.~E. Ingham.
\newblock Mean-value theorems in the theory of the {R}iemann zeta-function.
\newblock {\em Proc. London Math. Soc.}, S2-27(1):273--300, 1926.

\bibitem[Ing27]{Ingham}
A.~E. Ingham.
\newblock Some asymptotic formulae in the theory of numbers.
\newblock {\em J. London Math. Soc.}, S1-2(3):202--208, 1927.

\bibitem[IS00]{IwaniecSarnak}
H.~Iwaniec and P.~Sarnak.
\newblock Perspectives on the analytic theory of {$L$}-functions.
\newblock {\em Geom. Funct. Anal.}, (Special Volume, Part II):705--741, 2000.
\newblock GAFA 2000 (Tel Aviv, 1999).

\bibitem[Iwa87]{Iwaniechalfintegral}
H.~Iwaniec.
\newblock Fourier coefficients of modular forms of half-integral weight.
\newblock {\em Invent. Math.}, 87(2):385--401, 1987.

\bibitem[Iwa97]{IwaniecTopics}
H.~Iwaniec.
\newblock {\em Topics in classical automorphic forms}, volume~17 of {\em
  Graduate Studies in Mathematics}.
\newblock American Mathematical Society, Providence, RI, 1997.

\bibitem[Iwa02]{IwaniecSpectral}
H.~Iwaniec.
\newblock {\em Spectral methods of automorphic forms}, volume~53 of {\em
  Graduate Studies in Mathematics}.
\newblock American Mathematical Society, Providence, RI, second edition, 2002.

\bibitem[JL70]{JacquetLanglands}
H.~Jacquet and R.~P. Langlands.
\newblock {\em Automorphic forms on $\mathrm{GL}(2)$}.
\newblock Lecture Notes in Mathematics, Vol. 114. Springer-Verlag, Berlin,
  1970.

\bibitem[Kir76]{Kirillov}
A.~A. Kirillov.
\newblock {\em Elements of the theory of representations}.
\newblock Springer-Verlag, Berlin, 1976.
\newblock Translated from the Russian by Edwin Hewitt, Grundlehren der
  Mathematischen Wissenschaften, Band 220.

\bibitem[KM96]{KhuriMakdisi}
K.~Khuri-Makdisi.
\newblock On the {F}ourier coefficients of nonholomorphic {H}ilbert modular
  forms of half-integral weight.
\newblock {\em Duke Math. J.}, 84(2):399--452, 1996.

\bibitem[Kna01]{Knapp}
A.~W. Knapp.
\newblock {\em Representation theory of semisimple groups}.
\newblock Princeton Landmarks in Mathematics. Princeton University Press,
  Princeton, NJ, 2001.
\newblock An overview based on examples, Reprint of the 1986 original.

\bibitem[LG04]{Lokvenec}
H.~Lokvenec-Guleska.
\newblock {\em Sum formula for $\mathrm{SL}_2$ over imaginary quadratic
  fields}.
\newblock PhD thesis, Utrecht University, 2004.
\newblock http://igitur-archive.library.uu.nl/dissertations/2004-1203-105121/.

\bibitem[Mag13a]{MPkuznetsov}
P.~Maga.
\newblock A semi-adelic {K}uznetsov formula over number fields.
\newblock {\em Int. J. Number Theory}, 9(7):1649--1681, 2013.

\bibitem[Mag13b]{MPphd}
P.~Maga.
\newblock {\em Subconvexity and shifted convolution sums over number fields}.
\newblock PhD thesis, Central European University, Budapest, 2013.

\bibitem[Miy71]{Miyake}
T.~Miyake.
\newblock On automorphic forms on $\mathrm{GL}(2)$ and {H}ecke operators.
\newblock {\em Ann. of Math. (2)}, 94:174--189, 1971.

\bibitem[Mol02]{Molteni}
G.~Molteni.
\newblock Upper and lower bounds at {$s=1$} for certain {D}irichlet series with
  {E}uler product.
\newblock {\em Duke Math. J.}, 111(1):133--158, 2002.

\bibitem[Mot94]{Motohashi}
Y.~Motohashi.
\newblock The binary additive divisor problem.
\newblock {\em Ann. Sci. \'Ecole Norm. Sup. (4)}, 27(5):529--572, 1994.

\bibitem[Mot97]{Motohashibook}
Y.~Motohashi.
\newblock {\em Spectral theory of the {R}iemann zeta-function}, volume 127 of
  {\em Cambridge Tracts in Mathematics}.
\newblock Cambridge University Press, Cambridge, 1997.

\bibitem[MV10]{MichelVenkatesh}
P.~Michel and A.~Venkatesh.
\newblock The subconvexity problem for {${\mathrm{GL}}_2$}.
\newblock {\em Publ. Math. Inst. Hautes \'Etudes Sci.}, (111):171--271, 2010.

\bibitem[Nar74]{Narkiewicz}
W.~Narkiewicz.
\newblock {\em Elementary and analytic theory of algebraic numbers}.
\newblock PWN---Polish Scientific Publishers, Warsaw, 1974.
\newblock Monografie Matematyczne, Tom 57.

\bibitem[Neu99]{Neukirch}
J.~Neukirch.
\newblock {\em Algebraic number theory}, volume 322 of {\em Grundlehren der
  Mathematischen Wissenschaften [Fundamental Principles of Mathematical
  Sciences]}.
\newblock Springer-Verlag, Berlin, 1999.
\newblock Translated from the 1992 German original and with a note by N.
  Schappacher, with a foreword by G. Harder.

\bibitem[Sel65]{Selberg}
A.~Selberg.
\newblock On the estimation of {F}ourier coefficients of modular forms.
\newblock In {\em Proc. {S}ympos. {P}ure {M}ath., {V}ol. {VIII}}, pages 1--15.
  Amer. Math. Soc., Providence, R.I., 1965.

\bibitem[Sug90]{Sugiura}
M.~Sugiura.
\newblock {\em Unitary representations and harmonic analysis}, volume~44 of
  {\em North-Holland Mathematical Library}.
\newblock North-Holland Publishing Co., Amsterdam, second edition, 1990.
\newblock An introduction.

\bibitem[Ven04]{Venkatesh}
A.~Venkatesh.
\newblock ``{B}eyond endoscopy'' and special forms on {GL}(2).
\newblock {\em J. Reine Angew. Math.}, 577:23--80, 2004.

\bibitem[Ven10]{Venkateshsparse}
A.~Venkatesh.
\newblock Sparse equidistribution problems, period bounds and subconvexity.
\newblock {\em Ann. of Math. (2)}, 172(2):989--1094, 2010.

\bibitem[Wat95]{Watson}
G.~N. Watson.
\newblock {\em A treatise on the theory of {B}essel functions}.
\newblock Cambridge Mathematical Library. Cambridge University Press,
  Cambridge, 1995.
\newblock Reprint of the second (1944) edition.

\bibitem[Wei74]{Weil}
A.~Weil.
\newblock {\em Basic number theory}.
\newblock Springer-Verlag, New York, third edition, 1974.
\newblock Die Grundlehren der Mathematischen Wissenschaften, Band 144.

\bibitem[Wu]{Wu}
H.~Wu.
\newblock Burgess-like subconvex bounds for $\mathrm{GL}_2\times\mathrm{GL}_1$.
\newblock arXiv:1209.5950.

\bibitem[WW96]{WhittakerWatson}
E.~T. Whittaker and G.~N. Watson.
\newblock {\em A course of modern analysis}.
\newblock Cambridge Mathematical Library. Cambridge University Press,
  Cambridge, 1996.
\newblock An introduction to the general theory of infinite processes and of
  analytic functions; with an account of the principal transcendental
  functions, Reprint of the fourth (1927) edition.

\bibitem[Zha05]{Zhang}
S.-W. Zhang.
\newblock Equidistribution of {CM}-points on quaternion {S}himura varieties.
\newblock {\em Int. Math. Res. Not.}, (59):3657--3689, 2005.

\end{thebibliography}
\bibliographystyle{alpha}

\end{document}